\theoremstyle{plain}
\newtheorem{theorem}{Theorem}[section]
\newtheorem{proposition}[theorem]{Proposition}
\newtheorem{lemma}[theorem]{Lemma}
\newtheorem{corollary}[theorem]{Corollary}
\newtheorem{remark}[theorem]{Remark}
\newtheorem{definition}[theorem]{Definition}
\newcommand{\Mod}{\cal{M}\textup{od}}
\newcommand{\Sp}{\cal{S}\textup{pt}}
\newcommand{\cal}{\EuScript}
\newcommand{\Spec}{\textup{Spec}}
\newcommand{\Top}{\cal{T}\textup{op}}
\newcommand{\Sm}{\cal{S}\textup{m}}
\newcommand{\Shv}{\cal{S}\textup{hv}}
\newcommand{\PSh}{\cal{P}\textup{Sh}}
\newcommand{\Spc}{\cal{S}\textup{pc}}
\newcommand{\Spt}{\cal{S}\textup{pt}}
\newcommand{\Map}{\textup{Map}}
\newcommand{\et}{\textup{\'et}}
\newcommand{\Ext}{\textup{Ext}}
\newcommand{\Nis}{\textup{Nis}}
\newcommand{\Hom}{\textup{Hom}}
\newcommand{\Pro}{\textup{Pro}}
\newcommand{\colim}{\textup{colim}}
\begin{document}
\title{Comparison of Stable Homotopy Categories and a Generalized Suslin-Voevodsky Theorem}
\author{Masoud Zargar}
\renewcommand{\contentsname}{}
\begin{abstract}
Let $k$ be an algebraically closed field of exponential characteristic $p$. Given any prime $\ell\neq p$, we construct a stable \'etale realization functor
$$\underline{\textup{\'Et}}_{\ell}:\Spt(k)\rightarrow \Pro(\Sp)^{H\mathbb{Z}/\ell}$$
from the stable $\infty$-category of motivic $\mathbb{P}^1$-spectra over $k$ to the stable $\infty$-category of $(H\mathbb{Z}/\ell)^*$-local pro-spectra (see section~\ref{prohom} for definition). This is induced by the \'etale topological realization functor \'a la Friedlander. The constant presheaf functor naturally induces the functor
\[\textup{SH}[1/p]\rightarrow\textup{SH}(k)[1/p],\]
where $k$ and $p$ are as above and $\textup{SH}$ and $\textup{SH}(k)$ are the classical and motivic stable homotopy categories, respectively. We use the stable \'etale realization functor to show that this functor is fully faithful. Furthermore, we conclude with a homotopy theoretic generalization of the \'etale version of the Suslin-Voevodsky theorem.
\end{abstract}
\maketitle
\tableofcontents
\section{Introduction}\label{intro}
Motivic homotopy theory is designed to be the homotopy theory of algebraic geometry. Just as spectra in stable homotopy theory represent generalized cohomology theories, motivic spectra in stable motivic homotopy theory represent \textit{homotopy invariant} algebraic cohomology theories. There are comparison theorems that relate cohomology theories. For example, the Artin comparison theorem gives an isomorphism between the \'etale cohomology $H^*_{et}(X;\mathbb{Z}/n\mathbb{Z})$ of a separated finite type $\mathbb{C}$-scheme $X$ and the singular cohomology $H^*(X^{an};\mathbb{Z}/n\mathbb{Z})$ of its complex analytification $X^{an}$. Another such comparison theorem is the Suslin-Voevodsky theorem establishing an isomorphism between the (motivic) singular (co)homology of a separated finite type $\mathbb{C}$-scheme with finite coefficients and the singular (co)homology of its associated complex analytic space $X^{an}$. Suslin and Voevodsky also show that if $X$ is a separated finite type $k$-scheme, where $k$ is an algebraically closed field of exponential characteristic $p$ \textit{admitting resolution of singularities}, then its (motivic) singular cohomology $H^*_{sing}(X;\mathbb{Z}/n\mathbb{Z})$ with $n$ coprime to $p$ agrees with the \'etale cohomology $H^*_{et}(X;\mathbb{Z}/n\mathbb{Z})$ \cite{SV}. The method of de Jong alterations allows the removal of the condition that $k$ admits resolution of singularities \cite{Geisser}. In combination, these results say that for suitable finite torsion coefficients motivic cohomology is a generalization of both Betti and \'etale cohomology, whenever such cohomology theories are defined and well-behaved. In general, the study of realization functors is important because they, in particular, give us comparison theorems that allow us to understand complicated mathematical objects via their less complicated or concrete shadows. In particular, the existence of realization functors in the study of motives justifies the term \textit{motivic}. In this paper, we construct the stable $\ell$-adic \'etale realization functor, a realization functor on the level of motivic spectra. Using this construction, we relate the classical stable homotopy category to the stable motivic homotopy category and generalize the Suslin-Voevodsky theorem to the homotopy theoretic level.\\
\\
In the classical stable homotopy category $\textup{SH}$, the (topological) circle $S^1$ is inverted, while in the stable motivic homotopy category $\textup{SH}(S)$, the projective line $\mathbb{P}^1_S$ is inverted. Since two different objects are inverted in the two theories, the question arises whether there is a fully faithful functor from $\textup{SH}$ to $\textup{SH}(S)$, i.e. whether stable motivic homotopy theory subsumes classical stable homotopy theory. There has been progress in this respect for various base fields. In \cite{Levine}, Levine used the Betti realization functor on the level of motivic spectra to prove that the functor 
\[\textup{SH}\rightarrow\textup{SH}(k)\]
induced by the constant sheaf functor is fully faithful whenever $k$ is algebraically closed of characteristic zero. The proof given by Levine is by comparing the slice spectral sequence to the Betti realization of the slice spectral sequence. On the other hand, Ostvaer and Wilson \cite{OstWilson} jointly proved that for $k$ algebraically closed of positive characteristic $p$, the functor $\textup{SH}\rightarrow\textup{SH}(k)$ induces an isomorphism $\pi_n(\mathbb{S})[1/p]\rightarrow\underline{\pi}_{n,0}(\mathbf{1}_k)(k)[1/p]$. Their proof passes through Witt vectors to reduce the positive characteristic case to Levine's characteristic zero case. However, they use motivic Serre finiteness (see corollary~\ref{msf} and remark~\ref{msfr}), now a theorem due to Ananyevskiy-Levine-Panin \cite{ALP}, and the motivic Adams spectral sequence to establish their result.\\
\\
On the other hand, we use \'etale homotopy theory to deduce this result and more. Artin and Mazur defined the \'etale homotopy type of a scheme $X$ as a pro-homotopy type \cite{AM}, and this was later rigidified to the notion of a pro-space $\Pi^{\text{\'et}}_{\infty}X$ by Friedlander \cite{Friedlander}. The pro-space $\Pi^{\text{\'et}}_{\infty}X$ is defined such that its singular cohomology and the \'etale cohomology of $X$ agree for constant coefficients. Over a non-separably closed field $k$, it would be better to define $\Pi^{\text{\'et}}_{\infty}X$ \textit{relative} to $\Spec\ k$ so that the action of the absolute Galois group $G_k:=\text{Gal}(k^{sep}|k)$ is also considered; this would give us a pro-sheaf of spaces on the small \'etale site of $\Spec\ k$. The \'etale topological type $X\mapsto \Pi^{\text{\'et}}_{\infty}X$ of smooth $k$-schemes, $k$ an algebraically closed field, has been lifted to the level of motivic spaces \cite{Isaksen1}. In this paper, we will construct a stable version of the $\infty$-categorical incarnation of this construction, that is, for each prime $\ell\neq\text{expchar }k$, we construct a functor
\[\underline{\textup{\'Et}}_{\ell}:\Spt(k)\rightarrow\Pro(\Sp)^{H\mathbb{Z}/\ell}\]
between stable $\infty$-categories. Note that this stable \'etale realization is different from that constructed by Quick in \cite{Quick}. His construction is basically the formal stabilization of the unstable \'etale realization, whereas the one in this paper is not.\\
\\
The first application of our stable $\ell$-adic \'etale realization functor is the generalization of Levine's result to algebraically closed fields that need not be of characteristic zero. Secondly, we generalize the \'etale Suslin-Voevodsky theorem to the homotopy-theoretic level. Indeed, we prove the following theorems.
\begin{theorem}[Theorem~\ref{full}]
Let $k$ be an algebraically closed field of exponential characteristic $p$. Then
\[c[1/p]:\textup{SH}[1/p]\rightarrow\textup{SH}(k)[1/p]\]
induced by the constant presheaf functor is fully faithful.
\end{theorem}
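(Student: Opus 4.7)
The plan is to combine the stable $\ell$-adic \'etale realization functor $\underline{\textup{\'Et}}_\ell$ with an arithmetic fracture argument. I would first reduce fully faithfulness to a computation of the homotopy groups of the motivic sphere spectrum. Since $c$ is a colimit-preserving symmetric monoidal functor sending unit to unit and $\textup{SH}[1/p]$ is compactly generated by the sphere, fully faithfulness is equivalent to the unit map
\[\pi_n(\mathbb{S})[1/p]\longrightarrow\pi_{n,0}(\mathbf{1}_k)[1/p]\]
being an isomorphism for all $n\in\mathbb{Z}$, using that $\pi_n\,\textup{map}_{\textup{SH}(k)}(\mathbf{1}_k,\mathbf{1}_k)=\pi_{n,0}(\mathbf{1}_k)$.

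Next, by arithmetic fracture a $\mathbb{Z}[1/p]$-module spectrum is determined by its rationalization and its $\ell$-completions for primes $\ell\neq p$, so the task splits into a rational comparison and an $\ell$-adic comparison for each $\ell\neq p$. For the $\ell$-adic piece I would use $\underline{\textup{\'Et}}_\ell$ to factor the map as
\[\pi_n(\mathbb{S})[1/p]\longrightarrow\pi_{n,0}(\mathbf{1}_k)[1/p]\xrightarrow{\underline{\textup{\'Et}}_\ell}\pi_n\bigl(\underline{\textup{\'Et}}_\ell\,\mathbf{1}_k\bigr),\]
the essential input being that $\Spec k$ has trivial \'etale homotopy type for $k$ algebraically closed, so $\underline{\textup{\'Et}}_\ell$ applied to a constant presheaf returns its underlying topological object after profinite $\ell$-completion; in particular $\underline{\textup{\'Et}}_\ell(\mathbf{1}_k)\simeq\mathbb{S}^{\wedge}_\ell$, and the full composition matches the classical $\ell$-adic completion map. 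Combined with motivic Serre finiteness (Ananyevskiy-Levine-Panin), which ensures $\pi_{n,0}(\mathbf{1}_k)[1/p]$ is a finitely generated abelian group (finite for $n\neq 0$), and classical Serre finiteness, a diagram chase forces both maps in the composition to be isomorphisms after $\ell$-completion. For the rational part, over an algebraically closed field the rational motivic sphere is well understood and its weight-zero homotopy matches the rational classical stable stems, handling the rational comparison.

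The main obstacle will be the $\ell$-adic step: proving rigorously that $\underline{\textup{\'Et}}_\ell\circ c$ recovers the $\ell$-completion on classical spectra. This requires a careful analysis of the \'etale topological type of a constant simplicial presheaf — showing that its $\ell$-completed pro-homotopy type is the underlying space — together with the functoriality of this identification after stabilizing with respect to $\mathbb{P}^1$, which in turn requires matching $\mathbb{P}^1$-suspensions with classical $S^2$-suspensions under the realization (since $\mathbb{P}^1$ \'etale-realizes to $S^2$ after $\ell$-completion). Once this is in place, the retraction $\underline{\textup{\'Et}}_\ell\circ c[1/p]\simeq(-)^{\wedge}_\ell$ on classical spectra forces both injectivity and, via finiteness of the target, surjectivity on each graded piece, and arithmetic assembly concludes the proof.
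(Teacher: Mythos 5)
Your overall framework---reduce full faithfulness to an isomorphism of homotopy groups of the sphere, split via arithmetic fracture, and use $\underline{\textup{\'Et}}_\ell$ to detect the $\ell$-adic piece---is the right shape of the argument, and your identification $\underline{\textup{\'Et}}_\ell(\mathbf{1}_k)\simeq L^{H\mathbb{Z}/\ell}\mathbb{S}$ (so that $\underline{\textup{\'Et}}_\ell\circ c$ is formally the $\ell$-completion, since $L^{H\mathbb{Z}/\ell}\Gamma_!^{\mathbb{S}}$ is a pro-left adjoint to the constant-sheaf functor on the big \'etale site) is exactly what the paper uses. However, there is a genuine gap at the crucial step. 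Knowing the composition
\[
\pi_n(\mathbb{S})\otimes\mathbb{Z}_\ell \longrightarrow \underline{\pi}_{n,0}(\mathbf{1}_k)(k)\otimes\mathbb{Z}_\ell \xrightarrow{\ \underline{\textup{\'Et}}_{\ell*}\ } \pi_n\bigl(L^{H\mathbb{Z}/\ell}\mathbb{S}\bigr)
\]
is an isomorphism tells you the first map is (split) injective and the second is (split) surjective, so the middle group is the direct sum of $\pi_n(\mathbb{S})\otimes\mathbb{Z}_\ell$ and $\ker(\underline{\textup{\'Et}}_{\ell*})$. Finiteness of the middle term (motivic Serre finiteness) does \emph{not} force that kernel to vanish: a finite group can strictly contain an isomorphic retract of another finite group. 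So ``a diagram chase forces both maps to be isomorphisms'' is not a valid deduction. What is actually needed is a direct proof that the second map $\underline{\textup{\'Et}}_{\ell*}$ is injective (equivalently, an isomorphism), and this is where the real content lies.

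In the paper this is precisely Theorem~\ref{main}, which is proved not by finiteness bookkeeping but by constructing and comparing two strongly convergent spectral sequences---Voevodsky's slice spectral sequence for $\mathbf{1}_k$ and the spectral sequence obtained by applying $\underline{\textup{\'Et}}_\ell$ to the slice tower---and showing (via Lemma~\ref{coho}, Proposition~\ref{emprop}, and the identification of $\underline{\textup{\'Et}}_\ell(M\mathbb{Z}[1/p])$ with $H\mathbb{Z}_\ell$) that the realization induces an isomorphism on $E_1$-pages. None of this machinery appears in your sketch, and no substitute is offered. Note also that your proposal invokes motivic Serre finiteness (Ananyevskiy--Levine--Panin), whereas one of the stated aims of the paper is to \emph{avoid} it and in fact to recover it as Corollary~\ref{msf}; your route is thus closer in spirit to Ostvaer--Wilson, but they close the gap above with the motivic Adams spectral sequence, which your proposal likewise does not develop. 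As written, the $\ell$-adic (and also the rational, which is the $n\neq 0$ vanishing in a different guise) comparison remains unproved.
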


\begin{theorem}[Generalized Suslin-Voevodsky, theorem~\ref{susvoe}]
Suppose $k$ is an algebraically closed field of exponential characteristic $p$, $E\in\Spt^{\textup{eff}}(k)_{\textup{tor}}$ is an effective torsion $\mathbb{P}^1$-spectrum, and $\ell\neq p$ is a prime. Then
\[\underline{\textup{\'Et}}_{\ell\ast}:\underline{\pi}_{n,0}(E)(k)\otimes\mathbb{Z}_{\ell}\rightarrow\pi_n\left(\underline{\textup{\'Et}}_{\ell}(E)\right)\]
is an isomorphism.
\end{theorem}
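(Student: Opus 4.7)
The plan is to exploit the exactness of both sides and reduce to a generating class of spectra where the classical étale Suslin--Voevodsky theorem applies. I would first observe that both functors
\[F_n(E) := \underline{\pi}_{n,0}(E)(k) \otimes \mathbb{Z}_\ell, \qquad G_n(E) := \pi_n\bigl(\underline{\textup{\'Et}}_\ell(E)\bigr)\]
from $\Spt^{\textup{eff}}(k)_{\textup{tor}}$ to abelian groups are homological in $E$: they send cofiber sequences to long exact sequences and commute with filtered colimits. For $F_n$, this uses that $\underline{\pi}_{n,0}(-)(k)$ is corepresented by $\Sigma^n \mathbf{1}_k$ and that $-\otimes \mathbb{Z}_\ell$ is exact on the torsion subcategory; for $G_n$, this follows from the construction of $\underline{\textup{\'Et}}_\ell$ as an exact functor of stable $\infty$-categories together with the fact that $\pi_n$ is homological on $\Pro(\Spt)^{H\mathbb{Z}/\ell}$. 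The natural comparison $F_n \to G_n$ is the one induced by $\underline{\textup{\'Et}}_{\ell*}$.

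Next I would reduce to a class of generators. The effective category $\Spt^{\textup{eff}}(k)$ is generated under colimits by shifted suspension spectra $\Sigma^\infty_{\mathbb{P}^1}X_+[m]$ with $X \in \Sm_k$. Smashing with mod-$\ell^r$ Moore spectra then yields a generating family for the $\ell$-primary torsion subcategory; the $q$-primary torsion parts for $q \neq \ell$ vanish on both sides (on $F_n$ after $\otimes \mathbb{Z}_\ell$, on $G_n$ by the $(H\mathbb{Z}/\ell)^*$-localization of the target). It therefore suffices to verify the asserted isomorphism for $E = \Sigma^\infty_{\mathbb{P}^1}X_+/\ell^r$ with $X$ smooth.

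For such a generator, I would compute both sides explicitly. The group $F_n\bigl(\Sigma^\infty_{\mathbb{P}^1}X_+/\ell^r\bigr)$ ought to agree with the mod-$\ell^r$ Suslin homology $H^{\textup{Sus}}_n(X;\mathbb{Z}/\ell^r)$, exploiting the effective hypothesis together with motivic Serre finiteness (Ananyevskiy--Levine--Panin, as recalled in the introduction). On the other side, $G_n\bigl(\Sigma^\infty_{\mathbb{P}^1}X_+/\ell^r\bigr)$ is identified, via the construction of $\underline{\textup{\'Et}}_\ell$ and the Artin--Mazur comparison at the level of the \'etale topological type, with the $\ell^r$-torsion \'etale cohomology of $X$ in the appropriate degree. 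Naturality of these identifications forces $\underline{\textup{\'Et}}_{\ell*}$ to coincide with the classical comparison map of the \'etale Suslin--Voevodsky theorem (valid beyond the resolution of singularities hypothesis via de Jong alterations, as noted in the introduction), which is an isomorphism.

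The main obstacle, I expect, is establishing the identification $F_n\bigl(\Sigma^\infty_{\mathbb{P}^1}X_+/\ell^r\bigr) \cong H^{\textup{Sus}}_n(X;\mathbb{Z}/\ell^r)$: this is precisely where the effective hypothesis has to be used in an essential way, and it requires relating the bigraded motivic homotopy groups of an effective torsion spectrum to Suslin-homological groups by means of zero-slice information for suspension spectra, or the motivic Adams spectral sequence combined with motivic Serre finiteness. Once this is in hand, the matching of the resulting isomorphism with the map induced by $\underline{\textup{\'Et}}_\ell$ is a naturality check, and passing from the generating class back to arbitrary $E \in \Spt^{\textup{eff}}(k)_{\textup{tor}}$ proceeds by the homological properties of $F_n, G_n$ established in the first step.
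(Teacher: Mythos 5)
The reduction to the generators $E=\Sigma^\infty_{\mathbb{P}^1}X_+/\ell^r$ is reasonable, but the key identifications you then propose are both false, and this is precisely the gap that the paper's slice-spectral-sequence argument is designed to fill. For the left-hand side, $\underline{\pi}_{n,0}\bigl(\Sigma^\infty_{\mathbb{P}^1}X_+/\ell^r\bigr)(k)\otimes\mathbb{Z}_\ell$ is a weight-zero stable motivic homotopy group of a suspension spectrum, \emph{not} the Suslin homology $H^{\textup{Sus}}_n(X;\mathbb{Z}/\ell^r)$; Suslin homology arises only from the zeroth slice $s_0$, whereas $\Sigma^\infty_{\mathbb{P}^1}X_+$ has nontrivial higher slices (contributing up to $\dim X$), so there is a whole filtration's worth of difference between the two. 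Similarly, on the right-hand side $\pi_n\bigl(\underline{\textup{\'Et}}_\ell(\Sigma^\infty_{\mathbb{P}^1}X_+/\ell^r)\bigr)$ is a stable homotopy group of the $\ell$-completed \'etale homotopy type, which differs from \'etale cohomology by an Atiyah--Hirzebruch-type spectral sequence. You flag the first of these as ``the main obstacle,'' but since the identification is simply not an isomorphism, no amount of exploiting effectivity or Serre finiteness will establish it.

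The paper's proof avoids these false identifications entirely: it reduces to finite effective torsion spectra, forms the slice tower of $E$, shows that both the motivic and the \'etale-realized slice spectral sequences are strongly convergent (the latter using the connectivity estimate Proposition~\ref{etaleconnectivity}), and then observes that the $E_1$-pages are built from (torsion) motivic Eilenberg--MacLane data via Lemma~\ref{torlem}. Proposition~\ref{emprop} --- essentially a motivic-spectra-level reformulation of the derived \'etale Suslin--Voevodsky comparison, proved by factoring $\underline{\textup{\'Et}}_\ell$ through the change-of-topology map $\pi^*$ and the pro-left adjoint $L^{H\mathbb{Z}/\ell}\Gamma^{\mathbb{S}}_!$ --- then supplies the isomorphism on $E_1$-pages, hence on abutments. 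Note also that the paper explicitly emphasizes that motivic Serre finiteness is \emph{not} used in this argument, whereas you invoke it as an ingredient; this is another place where the proposed route diverges from what actually works. If you want to salvage your plan, you must replace the claimed direct identifications of $F_n$ and $G_n$ on generators with a comparison of the two spectral sequences, at which point you recover the paper's proof.
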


Though the first theorem can be proved using a density argument in conjunction with the main result of \cite{OstWilson}, the generalized \'etale Suslin-Voevodsky thereom is completely new. That being said, in this paper we give a completely different proof of the first theorem; the author believes that this proof is simpler, more conceptual, and susceptible to further generalization (to non-algebraically closed fields). We deduce the full-faithfulness result by comparing the slice spectral sequence to the spectral sequence induced by the stable \'etale realization of the slice tower. In particular, instead of using deformation theory, the motivic Adams spectral sequence, and motivic Serre finiteness (as in \cite{OstWilson}) to reduce to the characteristic zero case proved by Levine, we replace the Betti realization in Levine's proof with the stable \'etale realization. Futhermore, the techniques in this paper give us the second theorem above which is a homotopy-theoretic generalization of the \'etale version of the Suslin-Voevodsky theorem. Furthermore, we avoid using motivic Serre finiteness, and so obtain a new proof of this theorem in the special case that $k$ is algebraically closed. Though our proof uses the philosophy of Levine's proof, there are a number of technicalities that arise due to the appearance of pro-homotopy theory. Consequently, we have to work harder to prove some of the necessary lemmas and propositions. In particular, the proofs of the above theorems are not immediate consequences of our construction of the stable \'etale realization functor.\\
\\
There are a number of generalizations of the results in this paper that could be pursued. The author expects that $\textup{SH}$ fully faithfully embeds in $\textup{SH}(k)$ without the inversion of the exponential characteristic; however, the techniques of this paper do not work, at least with their current limitations. The inversion of the exponential characteristic occurs in a number of places in motivic homotopy theory. Most importantly, it occurs when dealing with resolution of singularities in the spirit of de Jong and Gabber (see Shane Kelly's work \cite{Kelly}). Furthermore, \'etale cohomology does not behave well at the residue characteristics. The author knows of no $\mathbb{A}^1$-homotopy invariant cohomology theory good enough to deal with this issue at the prime $p$. Another possible generalization that should be pursued is Galois equivariance, i.e. the dropping of the condition that $k$ is algebraically closed. There has been some progress in this direction. Now that we know the validity of motivic Serre finiteness \cite{ALP}, Heller and Ormsby have proved that, after $\eta$-completion, there is a fully faithful functor from the $C_2$-equivariant stable homotopy category to the stable motivic homotopy category of real closed fields \cite{HellerOrmsby}. The ideas behind the construction of the stable \'etale realization functor constructed in this paper may lead to Galois equivariant generalizations. Note, however, that the construction given here works well when dealing with algebraically closed fields. In order to deal with non-algebraically closed fields, we need to construct a stable \'etale realization functor that takes care of continuous Galois actions. This will be pursued in a future paper. That being said, proving that stable motivic homotopy theory subsumes some sort of Galois equivariant stable homotopy theory for fields that are not real closed requires a good theory of equivariant stable homotopy theory with profinite group actions. Perhaps, the work of Barwick \cite{Barwick} is relevant here.\\
\\
We warn the reader that the default language in this paper is that of $\infty$-categories as in \cite{HTT} and \cite{HA}. In particular, throughout the paper, $\cal{S}$ denotes the $\infty$-category of spaces. For notational simplicity, throughout this paper we view $\Sp$ as the stable $\infty$-category of $S^2$-spectra. The unit object is $\Sigma_{S^2}^{\infty}S^0$, also designated by $\mathbb{S}$.
\section{Motivic Homotopy Theory}\label{motivichomotopy}
In this section, we recall the foundations of motivic homotopy theory as well as its stable version. We will furthermore describe Voevodsky's slice tower. The bigraded slice tower can be found in \cite{Levine} in the language of triangulated categories. However, since we work in the $\infty$-categorical language, we reproduce here the basic constructions for completeness and for the benefit of the reader.\\
\\
Let $S$ be a Noetherian scheme of finite Krull dimension. Let $\Sm/S$ be the category of separated smooth schemes (of finite type) over $S$, which will henceforth be shortened to \textit{smooth schemes}. Let $\PSh_{\infty}(\Sm/S)$ and $\PSh_{\infty}^*(\Sm/S)$ be the $\infty$-categories $\textup{Fun}((\Sm/S)^{\textup{op}},\cal{S})$ and $\textup{Fun}((\Sm/S)^{\textup{op}},\cal{S}_*)$ of presheaves of spaces and pointed presheaves of spaces, respectively, on $\Sm/S$. Note that whenever $\cal{C}_0$ is an ordinary category, we identify it with its nerve $N(\cal{C}_0)$. Therefore, when we speak of $\infty$-functors from $\cal{C}_0$ to an $\infty$-category $\cal{D}$, we actually mean $\infty$-functors from the nerve $N(\cal{C}_0)$ of $\cal{C}_0$ to $\cal{D}$. Note that $N(\cal{C}_0)$ is an $\infty$-category, and that the nerve of an ordinary category coincides with the simplicial nerve of $\cal{C}_0$ viewed as a simplicial category with discrete morphisms spaces. In particular, we abuse notation to identify $\Sm/S$ with its nerve $N(\Sm/S)$. Taking $\PSh_{\infty}(\Sm/S)$ and sheafifying with respect to the Nisnevich topology and localizing with respect to $S$-projections $X\times_S\mathbb{A}_S^1\rightarrow X$ gives us the $\infty$-category $\Spc(S)$ of motivic spaces (since $S$ is assumed to be Neotherian of finite Krull dimension, this is already hypercomplete). Doing the same with $\PSh^*_{\infty}(\Sm/S)$ gives us the $\infty$-category $\Spc_*(S)$ of pointed motivic spaces. We can see from the homotopy pushout square
\[\xymatrix{\mathbb{G}_{m,S} \ar@{->}[r] \ar@{->}[d] & \mathbb{A}_S^1\simeq\ast \ar@{->}[d]\\ \ast\simeq\mathbb{A}_S^1 \ar@{->}[r] & \mathbb{P}_S^1}\]
in the $\infty$-category $\Spc_*(S)$ that the $S^1$-suspension of (pointed) $\mathbb{G}_{m,S}$ is in fact homotopy equivalent to $\mathbb{P}_S^1$ (pointed at $\infty$). Let $\Omega_{\mathbb{P}_S^1}(-):=\textup{Map}_{\Spc_*(S)}(\mathbb{P}_S^1,-)$. Let $\Spt(S)$ be the $\infty$-category of $\mathbb{P}_S^1$-spectra of motivic spaces over $S$:
\[\Spt(S):=\textup{lim}\left(\hdots\xrightarrow{\Omega_{\mathbb{P}_S^1}}\Spc_*(S)\xrightarrow{\Omega_{\mathbb{P}_S^1}}\Spc_*(S)\right),\]
where the limit is taken in the $\infty$-category of $\infty$-categories. Let $\Sigma_{\mathbb{P}_S^1}^{\infty}:\Spc_*(S)\rightarrow\Spt(S)$ be the left adjoint of $\Omega_{\mathbb{P}_S^1}^{\infty}:\Spt(S)\rightarrow\Spc_*(S)$, and call it the suspension functor. Note that such a left adjoint exists because $\Spc_*(S)$ is a presentable $\infty$-category; it is the localization of a category of presheaves. See proposition 1.4.4.4 of \cite{HA} for the existence of the left adjoint.\\
\\
We remind the reader that an $\infty$-category $\cal{C}$ is said to be \textit{accessible} if for some regular cardinal $\kappa$, there is a small $\infty$-category $\cal{C}_0$ such that $\cal{C}\simeq \textup{Ind}_{\kappa}(\cal{C}_0)$ \cite{HTT}. Basically, all this is saying is that even though $\cal{C}$ may be large, it is generated under $\kappa$-small filtered colimits by a small category.  An $\infty$-functor $F:\cal{C}\rightarrow\cal{D}$ is said to be an \textit{accessible functor} if $\cal{C}$ is an accessible $\infty$-category and there is a regular cardinal $\kappa$ such that $F$ preserves $\kappa$-small filtered colimits \cite{HTT}. Note that presentable $\infty$-categories are, by definition, accessible $\infty$-categories that admit all small colimits.\\
\\
$\mathbf{1}_S\in\Spt(S)$ denotes the motivic sphere spectrum $\Sigma_{\mathbb{P}^1}^{\infty}S_+$ over $S$. Smash product makes $\Spt(S)$ a symmetric monoidal $\infty$-category with unit object $\mathbf{1}_S$. Throughout this paper, we drop $S$ in all the above notations whenever the context makes the choice of $S$ clear.\\
\\
Throughout this paper, the motivic sphere $S^{p,q}$ will be defined as
\[(S^1)^{\wedge p}\wedge\mathbb{G}_{m,S}^{\wedge q}.\]
Note that $S^{p,q}$ usually denotes $(S^1)^{\wedge (p-q)}\wedge\mathbb{G}_{m,S}^{\wedge q}$; however, we find the above convention more convenient for our purposes.\\
\\
Given an $\infty$-category $\cal{C}$, we let $[X,Y]_{\cal{C}}:=\pi_0\textup{Map}_{\cal{C}}(X,Y)$. We will omit the subscript $\cal{C}$ whenever there is no possibility of confusion. We let $\Sigma^{p,q}:\Spt(S)\rightarrow\Spt(S)$ be the bigraded suspension functor $E\mapsto S^{p,q}\wedge E$. $\Omega^{p,q}:\Spt(S)\rightarrow\Spt(S)$ be the loop functor $E\mapsto S^{-p,-q}\wedge E$ with the expected definition. It is the right adjoint of $\Sigma^{p,q}$. If $E\in\Spt(S)$ is a motivic spectrum, then $\underline{\pi}_{p,q}(E)$ is the Nisnevich sheaf associated to the presheaf on $\Sm/S$ of abelian groups
\[X\mapsto [\Sigma^{p,q}\Sigma_{\mathbb{P}^1}^{\infty}X_+,E]\]
In the proof of the main theorem, Voevodsky's slice spectral sequence will play an essential role. We now set up the necessary notation and recall the construction of this spectral sequence. Let $\Spt(S)_{\geq(a,b)}$ be the full subcategory of $\Spt(S)$ generated under colimits and extensions by $\{\Sigma^{p,q}\Sigma_{\mathbb{P}^1}^{\infty}X_+|X\in\Sm/S, p\geq a\textup{ and }q\geq b\}$. $\Spt(S)_{\geq(a,-\infty)}$ denotes the full subcategory of $\Spt(S)$ generated under colimits and extensions by $\{\Sigma^{p,q}\Sigma^{\infty}X_+|X\in\Sm/S, p\geq a\textup{ and }q\in\mathbb{Z}\}$. Similarly, $\Spt(S)_{\geq(-\infty,b)}$ denotes the full subcategory of $\Spt(S)$ generated under colimits and extensions by $\{\Sigma^{p,q}\Sigma^{\infty}X_+|X\in\Sm/S, p\in\mathbb{Z}\textup{ and }q\geq b\}$. Note that $\Spt(S)$ is a presentable stable $\infty$-category. Indeed, the inclusion from $\cal{P}r^{R}$, the $\infty$-category of presentable $\infty$-categories with morphisms those accessible functors preserving small limits, into the $\infty$-category of $\infty$-categories preserves limits (proposition 5.5.3.18 of \cite{HTT}). Therefore, $\Spt(S)$ is a limit in $\cal{P}r^R$, which is closed under small limits, again by Proposition 5.5.3.18 of \cite{HTT}. The presentability of $\Spt(S)$ can also be seen by noting that $\Spt(S)$ is a colimit in $\cal{P}r^L\simeq(\cal{P}r^R)^{op}$, the $\infty$-category of presentable $\infty$-categories with morphisms those functors preserving small colimits. By construction, $\Spt(S)$ is a \textit{stable} $\infty$-category. As a result, $\Spt(S)_{\geq(-\infty,b)}$ is also a stable $\infty$-category. For each fixed $b\geq -\infty$, from proposition 1.4.4.11 of \cite{HA} and the stability of $\Spt(S)_{\geq(-\infty,b)}$ we deduce the existence of a $t$-structure on $\Spt(S)_{(-\infty,b)}$ with non-negative part $\Spt(S)_{\geq(0,b)}$. For each $(a,b)$ (where $a,b$ can be $-\infty$), we have a truncation functor $f_{a,b}:\Spt(S)\rightarrow\Spt(S)$ given by the truncation $\tau_{\geq (a,b)}:\Spt(S)\rightarrow\Spt(S)_{\geq (a,b)}$ followed by the inclusion $i_{a,b}:\Spt(S)_{\geq (a,b)}\rightarrow\Spt(S)$. $(i_{a,b},\tau_{\geq (a,b)})$ is an adjoint pair. For convenience, $E\mapsto E_{\geq(a,b)}$ will be the truncation $f_{a,b}E$ of $E$. Given a fixed $b$, this $t$-structure on $\Spt(S)_{\geq(-\infty,b)}$ with non-negative part $\Spt(S)_{\geq(0,b)}$ gives the cofiber sequences
\[E_{\geq(a,b)}\rightarrow E\rightarrow E_{\leq (a-1,b)}\rightarrow \Sigma^{1,0}E_{\geq (a,b)}.\]
We write $f_n:=f_{-\infty,n}$. We call $\Spt^{\textup{eff}}(S):=\Spt(S)_{\geq(-\infty,0)}$ the category of \textit{$t$-effective spectra}. For each spectrum $E\in\Spt(S)$ and each $n$, the adjunction $(i_{-\infty,n},\tau_{\geq (-\infty,n)})$ allows us to produce the natural morphism $f_nE\rightarrow E$, and the $n$-th \textit{slice} $s_nE$ is defined by the cofiber sequence
\[f_{n+1}E\rightarrow f_nE\rightarrow s_nE\rightarrow \Sigma^{1,0} f_{n+1}E.\]
Note that $f_{n+1}f_n=f_{n+1}$. For every $t$-effective spectrum $E$, these cofiber sequences fit together to produce Voevodsky's \textit{slice tower}
\[\hdots\rightarrow f_{t+1}E\rightarrow f_tE\rightarrow\hdots\rightarrow f_0E=E\]
with $n^{\textup{th}}$ layer $s_nE$. This gives us an exact couple of graded abelian groups
\[\xymatrix{\underline{\pi}_{*,0}(f_{*}E)(S)\ar@{->}[rr] && \underline{\pi}_{*,0}(f_{*}E)(S)\ar@{->}[dl]\\
&\underline{\pi}_{*,0}(s_*E)(S) \ar@{->}[ul]^{\partial},&}\]
where $\partial$ is of degree $-1$ and the unlabeled morphisms are of degree $0$. From this exact couple, we get Voevodsky's \textit{slice spectral sequence}
\[E_1^{p,q}=\underline{\pi}_{p+q}(s_qE)(S)\implies \underline{\pi}_{p+q}(E)(S),\]
where the differentials on the $r$-th page are given by
\[d_r^{p,q}:E_r^{p,q}\rightarrow E_r^{p-r-1,q+r}.\]
In section~\ref{specseq}, we recall conditions under which the convergence of this spectral sequence is guaranteed. We will also study the spectral sequence obtained via the stable $\ell$-adic \'etale realization of the above slice tower. By comparing these two spectral sequences for $S=\Spec\ k$, $k$ algebraically closed, we will deduce our comparison theorems.

\section{$\ell$-adic Homotopy Theory}\label{prohom}
The stable $\ell$-adic \'etale realization functor will take values in the $\infty$-category of $(H\mathbb{Z}/\ell)^*$-local pro-spectra (those that are \textit{cohomologically} Bousfield localized with respect to the Eilenberg-Maclane spectrum $H\mathbb{Z}/\ell$). In this section, we recall pro-categories in the spirit of Grothendieck (and Lurie in the $\infty$-categorical world) and give its universal property. This will be essential to our construction of the stable \'etale realization functor. We then discuss Bousfield localization of pro-spaces and pro-spectra using the language of $\infty$-categories. We now recall the central notion of pro-categories in the world of $\infty$-categories.   

\begin{definition}[Definition 3.1.1 of \cite{DAG13}]
Given an accessible $\infty$-category $\cal{C}$ admitting finite limits, its pro-category $\Pro(\cal{C})$ is the full subcategory of $\textup{Fun}(\cal{C},\cal{S})^{\textup{op}}$ spanned by the accessible functors $F:\cal{C}\rightarrow\cal{S}$ that preserve finite limits.
\end{definition}
The notion of a pro-category as defined above is very close to the more familiar classical notion. Indeed, if $\cal{C}$ is the nerve of an ordinary accessible category $\cal{C}_0$, then the $\infty$-category $\Pro(\cal{C})$ is the nerve of the (ordinary) pro-category $\Pro(\cal{C}_0)$ (example 3.1.3 of \cite{DAG13}).\\
\\
As an example, the $\infty$-category $\Sp$ of spectra is a presentable, hence accessible, $\infty$-category admitting finite limits. Therefore, we may construct $\Pro(\Sp)$ as above. Similarly, we have $\Pro(\cal{S})$.
\begin{remark}
\textup{If $\cal{C}$ is an accessible $\infty$-category admitting finite limits, then the collection of left-exact accessible functors $\cal{C}\rightarrow\cal{S}$ is closed under small filtered colimits in $\textup{Fun}(\cal{C},\cal{S})$. Therefore, $\Pro(\cal{C})$ is closed under small cofiltered limits. This suggests the following universal property.}
\end{remark}

\begin{proposition}[Proposition 3.1.6 of \cite{DAG13}]
Suppose $\cal{C}$ is an accessible $\infty$-category admitting finite limits, and suppose $\cal{D}$ is an $\infty$-category admitting small cofiltered limits. Let $\textup{Fun}'(\Pro(\cal{C}),\cal{D})$ be the full subcategory of $\textup{Fun}(\Pro(\cal{C}),\cal{D})$ spanned by those functors that preserve small cofiltered limits. Then the Yoneda embedding $\cal{C}\rightarrow\Pro(\cal{C})$ induces an equivalence
\[\textup{Fun}'(\Pro(\cal{C}),\cal{D})\rightarrow\textup{Fun}(\cal{C},\cal{D}).\]
\end{proposition}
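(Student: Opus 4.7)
My plan is to reduce this statement to the dual universal property of $\textup{Ind}$-completion (a standard result in \cite{HTT}, section 5.3) and then pass to opposites to exchange filtered colimits for cofiltered limits. The two main steps are (i) a careful identification $\Pro(\cal{C})\simeq\textup{Ind}(\cal{C}^{\textup{op}})^{\textup{op}}$, and (ii) applying the $\textup{Ind}$-universal property to $\cal{D}^{\textup{op}}$.

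For step (i): since $\cal{C}$ admits finite limits, $\cal{C}^{\textup{op}}$ admits finite colimits, so Lurie's characterization of $\textup{Ind}$-completion identifies $\textup{Ind}(\cal{C}^{\textup{op}})$ with the full subcategory of $\Fun((\cal{C}^{\textup{op}})^{\textup{op}},\cal{S})=\Fun(\cal{C},\cal{S})$ consisting of accessible functors that send finite colimits in $\cal{C}^{\textup{op}}$ to finite limits in $\cal{S}$, i.e., accessible left-exact functors $\cal{C}\to\cal{S}$. Passing to opposites identifies this with $\Pro(\cal{C})$ by the definition recalled above. Under this identification, the Yoneda embedding $\cal{C}\to\Pro(\cal{C})$ corresponds to the opposite of the Yoneda embedding $\cal{C}^{\textup{op}}\to\textup{Ind}(\cal{C}^{\textup{op}})$, and small cofiltered limits in $\Pro(\cal{C})$ correspond to small filtered colimits in $\textup{Ind}(\cal{C}^{\textup{op}})$.

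For step (ii): the universal property of $\textup{Ind}$ states that for any $\infty$-category $\cal{E}$ admitting small filtered colimits, restriction along the Yoneda embedding $\cal{C}^{\textup{op}}\to\textup{Ind}(\cal{C}^{\textup{op}})$ yields an equivalence
\[
\Fun^{\textup{filt}}(\textup{Ind}(\cal{C}^{\textup{op}}),\cal{E})\xrightarrow{\simeq}\Fun(\cal{C}^{\textup{op}},\cal{E}),
\]
where $\Fun^{\textup{filt}}$ denotes the full subcategory of functors preserving small filtered colimits. Applying this with $\cal{E}=\cal{D}^{\textup{op}}$ is legitimate because $\cal{D}$ admits small cofiltered limits by hypothesis, so $\cal{D}^{\textup{op}}$ admits small filtered colimits. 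Passing to opposites on both sides and inserting the identification of step (i) then yields the claimed equivalence
\[
\Fun'(\Pro(\cal{C}),\cal{D})\xrightarrow{\simeq}\Fun(\cal{C},\cal{D}).
\]

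The main technical hurdle I anticipate is step (i): one must verify the identification $\Pro(\cal{C})\simeq\textup{Ind}(\cal{C}^{\textup{op}})^{\textup{op}}$ at the level of $\infty$-categories rather than merely as a bijection on objects, and check that the Yoneda embeddings match. Since both sides are defined as full subcategories of (opposites of) functor $\infty$-categories, this reduces to the observation that they contain exactly the same objects and automatically inherit the same mapping spaces; once that is settled, the rest of the argument is formal and simply shuffles opposites past the universal property of $\textup{Ind}$.
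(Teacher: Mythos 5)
The paper itself does not supply a proof here; it simply cites Proposition~3.1.6 of Lurie's DAG~XIII. So the comparison is between your argument and Lurie's.

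Your strategy is the natural one and is essentially correct when $\cal{C}$ is a \emph{small} $\infty$-category with finite limits: then $\cal{C}^{\textup{op}}$ is small with finite colimits, $\textup{Ind}(\cal{C}^{\textup{op}})$ is the full subcategory of $\textup{Fun}(\cal{C},\cal{S})$ on left-exact functors (accessibility of the functors is automatic), and passing to opposites through HTT~5.3.5.10 gives the result. However, the proposition is stated for $\cal{C}$ \emph{accessible}, not small, and this is the case used in the paper (e.g.\ $\cal{C}=\cal{S}$ or $\cal{C}=\Sp$). That is where step~(i) has a genuine gap: when $\cal{C}$ is a large accessible $\infty$-category, $\cal{C}^{\textup{op}}$ is in general neither small nor accessible (accessibility is not self-dual), so ``$\textup{Ind}(\cal{C}^{\textup{op}})$'' is not an object to which the HTT~\S5.3 machinery applies, and there is no ``standard'' universal property to quote. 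Moreover, your proposed characterization of $\textup{Ind}(\cal{C}^{\textup{op}})$ as ``accessible functors $\cal{C}\to\cal{S}$ preserving finite limits'' is not a theorem about $\textup{Ind}$ in this generality; it is precisely the \emph{definition} of $\Pro(\cal{C})^{\textup{op}}$. So in the accessible case your reduction essentially relabels the statement rather than proving it.

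To close the gap you would need an additional reduction: use the accessibility of $\cal{C}$, i.e.\ $\cal{C}\simeq\textup{Ind}_\kappa(\cal{C}_0)$ for some small $\cal{C}_0$, to exhibit $\Pro(\cal{C})$ as $\Pro$ of a small full subcategory of $\cal{C}$ closed under finite limits (using that an accessible left-exact $\cal{C}\to\cal{S}$ is determined by its restriction to a suitable small subcategory), and only then apply the small-category argument via opposites. This is the extra content of Lurie's Proposition~3.1.6, and it is exactly the part that the accessibility hypothesis on functors in Definition~3.1.1 is designed to make work.
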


\begin{remark}\label{imprem} As an application, suppose $f:\cal{C}\rightarrow\cal{D}$ is a left exact functor between accessible $\infty$-categories admitting finite limits. Composing $f$ with the Yoneda embedding $\cal{D}\rightarrow\Pro(\cal{D})$, we obtain $\cal{C}\rightarrow\Pro(\cal{D})$. By the above universal property, we obtain a functor $\Pro(f):\Pro(\cal{C})\rightarrow\Pro(\cal{D})$ that preserves small cofiltered limits. On the other hand, pre-composition with $f$ gives us a functor $\Pro(\cal{D})\rightarrow\Pro(\cal{C})$ that is the left adjoint of $\Pro(f)$. Its restriction to $\mathcal{D}$ is called the \textit{pro-left adjoint} of $f$.
\end{remark}
Given a prime $\ell$, $\cal{S}^{\ell-\textup{fc}}$ is defined as the full subcategory of $\cal{S}$ of \textit{$\ell$-finite spaces}, that is, those spaces $X$ with finitely many nontrivial homotopy groups with the additional conditions that $\pi_0X$ is finite and $\pi_iX$, $i\geq 1$, are $\ell$-finite groups. We then let $\cal{S}^{\Pro(\ell)}:=\Pro(\cal{S}^{\ell-\textup{fc}})$, and call it the $\infty$-category of \textit{$\ell$-profinite spaces}.\\
\\
There is a notion of $(H\mathbb{Z}/\ell)^*$-localization of $\Pro(\cal{S})$. A morphism $X\rightarrow Y$ in $\Pro(\cal{S})$ is called an \textit{$(H\mathbb{Z}/\ell)^*$-equivalence} if it induces an isomorphism $H^*(Y;\mathbb{Z}/\ell)\rightarrow H^*(X;\mathbb{Z}/\ell)$. A pro-space $Z$ is \textit{$(H\mathbb{Z}/\ell)^*$-local} if for every $(H\mathbb{Z}/\ell)^*$-equivalence $X\rightarrow Y$, $\textup{Map}_{\Pro(\cal{S})}(Y,Z)\rightarrow\textup{Map}_{\Pro(\cal{S})}(X,Z)$ is a weak equivalence in $\cal{S}$. $\Pro(\cal{S})^{\mathbb{Z}/\ell}$ is the full $\infty$-subcategory of $(H\mathbb{Z}/\ell)^*$-local pro-spaces in $\Pro(\cal{S})$. It is not hard to show that $\Pro(\cal{S})^{\mathbb{Z}/\ell}=\Pro(\cal{S}^{\ell-\textup{fc}})$ (see remark 3.8 of \cite{Hoyois}). We construct localization functors by using the universal property of pro-categories. The inclusion $i:\cal{S}^{\ell-fc}\hookrightarrow\cal{S}$ is accessible and left exact. Therefore, remark~\ref{imprem} implies that $i:\Pro(\cal{S})^{\mathbb{Z}/\ell}\rightarrow\Pro(\cal{S})$ admits a left adjoint $L^{\mathbb{Z}/\ell}:\Pro(\cal{S})\rightarrow\Pro(\cal{S})^{\mathbb{Z}/\ell}$.\\
\\ 
Since the $\infty$-categories $\Pro(\cal{S})^{\mathbb{Z}/\ell}$ and $\Pro(\cal{S}^{\ell-\textup{fc}})$ are equivalent, we will henceforth also call $(H\mathbb{Z}/\ell)^*$-local profinite spaces $\ell$-profinite spaces. Similarly, there is also a pointed version of this construction.\\
\\
Similarly, we may localize $\Pro(\Sp)$ with respect to the Eilenberg-Maclane spectrum $H\mathbb{Z}/\ell$ as follows. Call a morphism $E\rightarrow F$ of pro-spectra an \textit{$(H\mathbb{Z}/\ell)^*$-equivalence} if $\textup{Map}_{\Pro(\Sp)}(F,H\mathbb{Z}/\ell)\rightarrow \textup{Map}_{\Pro(\Sp)}(E,H\mathbb{Z}/\ell)$ is a weak equivalence. Note that we are abusing terminology here; when we speak of $(H\mathbb{Z}/\ell)^*$-local pro-spaces, we will mean the definition given in the previous paragraph, while if we speak of $(H\mathbb{Z}/\ell)^*$-local pro-spectra, we mean this definition. It will be clear from context which we mean. Let $\Pro(\Sp)^{H\mathbb{Z}/\ell}$ be the full subcategory of $\Pro(\Sp)$ spanned by the $(H\mathbb{Z}/\ell)^*$-local objects, that is, the full subcategory of $\Pro(\Sp)$ consisting of spectra $G$ such that for every $(H\mathbb{Z}/\ell)^*$-equivalence $E\rightarrow F$, $\textup{Map}_{\Pro(\Sp)}(F,G)\rightarrow\textup{Map}_{\Pro(\Sp)}(E,G)$ is an equivalence. Similar to the case of $\Pro(\cal{S})$, we produce an adjoint pair of functors
\[L^{H\mathbb{Z}/\ell}:\Pro(\Sp)\rightleftarrows\Pro(\Sp)^{H\mathbb{Z}/\ell}:i_{st},\]
where $i_{st}$ is the inclusion functor. The existence of the Bousfield localization functors could have also been shown by using the strict model structures of Isaksen \cite{Isaksen2}.\\
\\
As we will see in our construction of the \'etale realization functors, the categories $\Pro(\cal{S})^{\mathbb{Z}/\ell}$ and $\Pro(\Sp)^{H\mathbb{Z}/\ell}$ will play an important role for us.\\
\\
In classical stable homotopy theory, we have the adjunction between infinite suspension and infinite loop space functors
\[\Sigma_+^{\infty}:\cal{S}\rightleftarrows\Spt:\Omega^{\infty}.\]
This induces the adjunction
\[\Pro(\Sigma_+^{\infty}):\Pro(\cal{S})\rightleftarrows\Pro(\Spt):\Pro(\Omega^{\infty}),\]
from which we naturally obtain the adjunction
\[\Sigma^{\infty}_{S^2_{\ell}+}:\Pro(\cal{S})^{\mathbb{Z}/\ell}\rightleftarrows\Pro(\Spt)^{H\mathbb{Z}/\ell}:\Omega_{S^2_{\ell}}^{\infty}\]
of the Bousfield localized categories. 

\section{Shape Theory and \'Etale Realization Functors}\label{etalehomotopy}
In \cite{Levine}, Levine used the Betti realization of motivic spectra to prove his full-faithfulness result for algebraically closed fields of characteristic zero. The stable Betti realization is an easy extension of the unstable Betti realization of motivic \textit{spaces}. In positive characteristic, however, the extension of the \'etale realization of motivic spaces due to Isaksen \cite{Isaksen1} is not immediate because we need a good stable homotopy theory of pro-spaces. The main purpose of this section is to show that for $k$ algebraically closed, there is a stable \'etale realization of motivic spectra that behaves well enough for us. \\
\\
In this section, we will first describe shape theory and recall the \'etale realization of schemes. We will then describe Hoyois' (see \cite{Hoyois}) $\infty$-categorical incarnation of Isaksen's \'etale realization of motivic spaces to obtain the unstable $\ell$-adic \'etale realization fuctor
\[\Spc(S)\rightarrow\Pro(\cal{S})^{\mathbb{Z}/\ell}.\] 
\\
For our construction of the stable $\ell$-adic \'etale realization, we first construct the \'etale realization functor for Nisnevich sheaves of $E$-modules on $\Sm/S$, $E$ an $E_{\infty}$-ring spectrum. Roughly, specializing this construction to $S=\Spec\ k$, $k$ algebraically closed, and $E=\mathbb{S}$, and performing $\mathbb{A}^1$-localization and Bousfield localization, we obtain the stable $\ell$-adic \'etale realization functor of motivic spectra
\[\Spt(k)\rightarrow\Pro(\Sp)^{H\mathbb{Z}/\ell}.\]
Also taking $S=\Spec\ k$, $k$ an algebraically closed field, in Hoyois' unstable case, we will be able to compare the unstable and stable \'etale realization functors. Before doing so, here are some preliminaries on shape theory and Grothendieck sites.\\
\\
If $\tau$ is a Grothendieck topology on the category of schemes, and $X$ is a scheme, the small $\tau$-site will be the full subcategory of $\cal{S}ch/X$, the category of separated finite type $X$-schemes, generated by members of the $\tau$-coverings of $X$ and with the Gorthendieck topology induced by $\tau$. $X_{\tau}$ denotes the $\infty$-topos of sheaves of spaces on the small $\tau$-site of $X$. This assignment of $X_{\tau}$ to a scheme $X$ is functorial in the sense that a morphism of schemes $f:X\rightarrow Y$ induces a morphism of $\infty$-topoi $f_*:X_{\tau}\rightarrow Y_{\tau}$ given by $f_*(\cal{F})(-):=\cal{F}(-\times_YX)$. In this paper, the \'etale topology will be of greatest importance to us.\\
\\
Let $\Top^R$ be the $\infty$-category of $\infty$-topoi with morphisms the geometric morphisms. Recall that a geometric morphism $f_*:\cal{X}\rightarrow\cal{Y}$ between $\infty$-topoi is a functor that admits a left exact left adjoint, designated by $f^*:\cal{Y}\rightarrow\cal{X}$. Given a geometric morphism $f_*:\cal{X}\rightarrow\cal{Y}$ with left adjoint $f^*:\cal{Y}\rightarrow\cal{X}$, we have a pro-left adjoint $f_!:\cal{X}\rightarrow\Pro(\cal{Y})$ to $f^*$ given by $f_!(X)(Y):=\textup{Map}_{\cal{X}}(X,f^*Y)$. Indeed, note that by the Yoneda embedding, $f_!(X)(Y)\simeq \textup{Map}_{\Pro(\cal{S})}(f_!(X),Y)$. Therefore, we see that $f_!$ is a pro-left adjoint to $f^*$. In particular, we have a geometric morphism $\pi_*:\cal{X}\rightarrow\cal{S}$ that is unique up to homotopy ($\cal{S}$ is final in $\Top^R$ by proposition 6.3.4.1 of \cite{HTT}). This has the constant sheaf functor $\pi^*:\cal{S}\rightarrow\cal{X}$ as its left adjoint. The pro-left adjoint $\pi_!:\cal{X}\rightarrow\Pro(\cal{S})$ is given by
\[\pi_!(F)(K)=\textup{Map}_{\cal{X}}(F,\pi^*K).\]
The \textit{shape} $\Pi_{\infty}\cal{X}$ of the $\infty$-topos $\cal{X}$ is defined as the pro-space $\pi_!(\mathbf{1}_{\cal{X}})$, where $\mathbf{1}_{\cal{X}}$ is the final object of $\cal{X}$. $\Pi_{\infty}:\Top^R\rightarrow\Pro(\cal{S})$ is in fact a pro-left adjoint to the functor $\cal{S}\rightarrow\Top^R$ sending a space to the $\infty$-topos of $\infty$-sheaves on the space. See \cite{Hoyois2} for details regarding this functor $\Pi_{\infty}$.\\
\\
Important examples come from Grothendieck sites. For example, if $X$ is a scheme, then $\Pi^{\text{\'et}}_{\infty}X:=\Pi_{\infty}X_{\text{\'et}}$ is the \textit{\'etale topological type} of $X$. Note that this construction is functorial in the sense that a morphism of schemes $X\rightarrow Y$ functorially gives a morphism $\Pi^{\text{\'et}}_{\infty}X\rightarrow\Pi^{\text{\'et}}_{\infty}Y$ of \'etale topological types.
\begin{remark}
\textup{By the construction of $\Pi^{\text{\'et}}_{\infty}X$, for constant coefficients $A$, $H^*(\Pi^{\text{\'et}}_{\infty}X;A)\cong H_{\et}^*(X;A)$. This feature of the \'etale topological type allows us to prove theorems about \'etale cohomology by using algebraic topology.}
\end{remark}
The $\ell$-adic unstable \'etale realization functor on motivic spaces is constructed as follows. We first lift
\[\Pi^{\text{\'et}}_{\infty}:\Sm/S\rightarrow\Pro(\cal{S}),\]
to a functor
\[\hat{\Pi}^{\text{\'et}}_{\infty}:\Shv^{\tau}_{\infty}(\Sm/S)\rightarrow\Pro(\cal{S})\]
that preserves small homotopy colimits. Here $\tau$ is any topology coarser than the \'etale topology, for example the Nisnevich topology. 
We note that $\Pro(\cal{S})$ is cocomplete.
Consider the following diagram:
\[\xymatrix{\Sm/S\ar@{->}[r]^{\Pi^{\text{\'et}}_{\infty}} \ar@{->}[d] & \Pro(\cal{S})\\
\PSh_{\infty}(\Sm/S) \ar@{.>}[ur]_{\tilde{\Pi}^{\text{\'et}}_{\infty}} \ar@{->}[d] &\\
\Shv^{\tau}_{\infty}(\Sm/S) \ar@{.>}[uur]_{\hat{\Pi}^{\text{\'et}}_{\infty}}. &
}\]
The middle dotted arrow can be constructed uniquely from $\Pi^{\text{\'et}}_{\infty}$ because $\Pro(\cal{S})$ is cocomplete and every presheaf of spaces on $\Sm/S$ is a colimit of representable presheaves of spaces. In order to lift to all of $\Shv^{\tau}_{\infty}(\Sm/S)$, we use the following:
\begin{lemma}
If $U$ is an \'etale covering diagram in the small \'etale site of $X$, then $\Pi^{\text{\'et}}_{\infty}X$ is the colimit of the diagram of pro-spaces $\Pi^{\text{\'et}}_{\infty}U$. 
\end{lemma}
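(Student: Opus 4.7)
The plan is to reduce the statement to three ingredients: cocontinuity of the shape functor $\pi_!^X\colon X_{\et}\to\Pro(\cal{S})$, descent in the $\infty$-topos $X_{\et}$, and compatibility of $\pi_!$ with passage to slice $\infty$-topoi. First I would observe that $\pi_!^X$ preserves small colimits. Indeed, $\pi_!^X(F)(K)=\Map_{X_{\et}}(F,\pi_X^*K)$, and $\Map_{X_{\et}}(-,\pi_X^*K)$ sends colimits in $X_{\et}$ to limits in $\cal{S}$; since colimits in $\Pro(\cal{S})\subset\Fun(\cal{S},\cal{S})^{\textup{op}}$ are computed as pointwise limits in $\Fun(\cal{S},\cal{S})$, we obtain $\pi_!^X(\colim F_i)(K)=\lim\pi_!^X(F_i)(K)=(\colim\pi_!^X(F_i))(K)$, so $\pi_!^X$ commutes with colimits formed in $X_{\et}$.

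Next, I would invoke descent. Because $X_{\et}$ is an $\infty$-topos, any \'etale covering diagram $U$ in the small \'etale site of $X$ (say, the \v{C}ech nerve of an \'etale cover $U_0\to X$, or the sieve it generates) has colimit in $X_{\et}$ the final object $\mathbf{1}_{X_{\et}}$: the covering map is an effective epimorphism and its \v{C}ech realization recovers the base. Finally, for each $V\to X$ appearing in $U$, I want to identify $\pi_!^X(V)$ with $\Pi^{\et}_\infty V$. Because $V\to X$ is \'etale, the slice $\infty$-topos $X_{\et}/V$ is equivalent to $V_{\et}$ via the standard pullback-extension equivalence, and under this equivalence the canonical geometric morphism $X_{\et}/V\to\cal{S}$ corresponds to $V_{\et}\to\cal{S}$. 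The slice adjunction then gives, for any $K\in\cal{S}$,
\[\pi_!^X(V)(K)=\Map_{X_{\et}}(V,\pi_X^*K)\simeq\Map_{X_{\et}/V}(\mathbf{1},(\pi_X^*K)\times V)\simeq\Map_{V_{\et}}(\mathbf{1}_{V_{\et}},\pi_V^*K)=\Pi^{\et}_\infty V(K).\]

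Assembling the three steps,
\[\Pi^{\et}_\infty X=\pi_!^X(\mathbf{1}_{X_{\et}})=\pi_!^X(\colim U)\simeq\colim\pi_!^X(U)\simeq\colim\Pi^{\et}_\infty U,\]
which is the desired identification. The main obstacle is the third step: one must pin down the $\infty$-categorical slice equivalence $X_{\et}/V\simeq V_{\et}$ and its compatibility with the shape functor, and verify that the chain of identifications above is functorial in $V$ so as to yield an actual equivalence of colimit diagrams in $\Pro(\cal{S})$, not merely a pointwise identification. Step two is a standard invocation of descent in an $\infty$-topos, and step one is a short manipulation of mapping spaces together with the pointwise description of colimits in $\Pro(\cal{S})$.
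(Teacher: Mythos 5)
Your proposal is correct, but it takes a genuinely different path from the paper. The paper, quoting Hoyois, works at the level of $\infty$-topoi: one first identifies $X_{\text{\'et}}$ as the colimit in $\Top^R$ of the diagram of $\infty$-topoi $U_{\text{\'et}}$, and then invokes the fact that the shape functor $\Pi_{\infty}:\Top^R\rightarrow\Pro(\cal{S})$ is a left adjoint, hence cocontinuous. Your argument instead stays inside the single $\infty$-topos $X_{\text{\'et}}$: you observe that the pro-left adjoint $\pi_!^X$ is cocontinuous (by the pointwise description of colimits in $\Pro(\cal{S})$), that descent identifies the colimit of the covering diagram with the terminal object $\mathbf{1}_{X_{\text{\'et}}}$, and that for each $V$ in the diagram the slice equivalence $X_{\text{\'et}}/V\simeq V_{\text{\'et}}$ transports $\pi_!^X(V)$ to $\Pi^{\text{\'et}}_\infty V$. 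In effect, your proof is an unwinding of the paper's: the identification of $X_{\text{\'et}}$ with $\operatorname{colim}U_{\text{\'et}}$ in $\Top^R$ is itself proved via slice equivalences and internal descent, and the cocontinuity of $\Pi_{\infty}$ is exactly the cocontinuity of the family of $\pi_!$'s made compatible across slices. The paper's route is more modular (it black-boxes two higher-level facts), while yours is more elementary and makes visible exactly which features of $X_{\text{\'et}}$ are being used; the price is that one must indeed check, as you note, that the slice equivalences are natural in $V$ so that the pointwise identifications assemble to an equivalence of colimit diagrams — but this is a routine verification since the slice equivalences arise from a coherent base-change formalism and is not a genuine gap.
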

\begin{proof}
This is lemma 1.5 of \cite{Hoyois}. The proof is that the $\infty$-topos $X_{\text{\'et}}$ is the colimit in $\Top^R$ of the diagram of $\infty$-topoi $U_{\text{\'et}}$. Since $\Pi_{\infty}:\Top^R\rightarrow\Pro(\cal{S})$ is a left adjoint, it preserves colimits. The conclusion follows.
\end{proof}
Using this, we lift $\Pi^{\text{\'et}}_{\infty}$ to a left adjoint $\hat{\Pi}^{\text{\'et}}_{\infty}:\Shv^{\tau}_{\infty}(\Sm/S)\rightarrow\Pro(\cal{S})$. Alternatively, $\Pi^{\text{\'et}}_{\infty}$ can be globalized by instead working with the \textit{big} \'etale site $(\Sm/S)_{\textup{\'et}}$. Indeed, consider the global sections functor
\[\Gamma_*:\Shv^{\textup{\'et}}_{\infty}(\Sm/S)\rightarrow\cal{S}\]
with left adjoint
\[\Gamma^*:\cal{S}\rightarrow\Shv^{\textup{\'et}}_{\infty}(\Sm/S)\]
given by sending a space to the constant sheaf on the big \'etale site over $S$. This has a pro-left adjoint
\[\Gamma_!:\Shv^{\textup{\'et}}_{\infty}(\Sm/S)\rightarrow\Pro(\cal{S}).\]
Note that $\Gamma_!(X)=\Pi_{\infty}(\Shv^{\textup{\'et}}_{\infty}(\Sm/S)/X)$. If $X$ is representable, then this is the same as $\Pi_{\infty}X_{\textup{\'et}}$ (see remark 5.3 of \cite{Hoyois}).
Therefore, $\hat{\Pi}^{\text{\'et}}_{\infty}$ is given by the composition
\[\Shv^{\tau}_{\infty}(\Sm/S)\xrightarrow{a_{\textup{\'et}}}\Shv^{\textup{\'et}}_{\infty}(\Sm/S)\xrightarrow{\Gamma_!}\Pro(\cal{S}).\]
This preserves colimits as it is the composition of (restrictions of) two left adjoint functors.\\
\\
For any prime $\ell$ not equal to the residue characteristics of $S$, and $X$ a separated finite type $S$-scheme, we have that the map $H^*_{\text{\'et}}(X;\mathbb{Z}/\ell\mathbb{Z})\rightarrow H^*_{\text{\'et}}(X\times_S\mathbb{A}_S^1;\mathbb{Z}/\ell\mathbb{Z})$ induced by the projection $X\times_S\mathbb{A}_S^1\rightarrow X$ is an isomorphism (see VII, Cor. 1.2 of \cite{SGA5}). Therefore, the composition 
\[\Shv^{\tau}_{\infty}(\Sm/S) \xrightarrow{{\hat{\Pi}}^{\text{\'et}}_{\infty}} \Pro(\cal{S}) \xrightarrow{L^{\mathbb{Z}/\ell}} \Pro(\cal{S})^{\mathbb{Z}/\ell}\]
can be localized with respect to the projections $\{X\times_S\mathbb{A}^1_S\rightarrow X|X\in\Sm/S\}$ to obtain the \textit{unstable $\ell$-adic \'etale realization}
\[\textup{\'Et}_{\ell}:\Spc_{\tau}(S)\rightarrow \Pro(\cal{S})^{\mathbb{Z}/\ell}\]
of $\tau$-motivic spaces (instead of the Nisnevich topology in the construction of motivic spaces, use the topology $\tau$). In our future computations, we will need the following lemma.
\begin{lemma}
For $p,q\geq 0$, $\textup{\'Et}_{\ell}S^{p,q}\simeq K(\mathbb{Z}_{\ell},1)^{\wedge p}\wedge K(T\mu_{\ell},1)^{\wedge q}$, where $T\mu_{\ell}$ is the Tate module of $\mu_{\ell}$.
\end{lemma}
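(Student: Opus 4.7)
The plan is to compute $\textup{\'Et}_{\ell}$ on the two motivic spheres $S^1$ and $\mathbb{G}_m$ separately, and then combine the answers using symmetric monoidality with respect to the smash product. Three properties of the unstable realization $\textup{\'Et}_\ell$ will be used: it is cocontinuous (as a left adjoint built from $\Gamma_!$, $L^{\mathbb{Z}/\ell}$, and $\mathbb{A}^1$-localization); on a representable $X \in \Sm/k$ it equals $L^{\mathbb{Z}/\ell}\Pi^{\text{\'et}}_{\infty}X$; and (over an algebraically closed base) it is symmetric monoidal.

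First I would compute $\textup{\'Et}_{\ell}(S^1)$. The motivic space $S^1$ is the constant presheaf at the simplicial circle, which can be written as the pushout $\Spec k \sqcup_{\Spec k \sqcup \Spec k} \Spec k$ in pointed motivic spaces. Cocontinuity of $\hat{\Pi}^{\text{\'et}}_{\infty}$ together with $\Pi^{\text{\'et}}_{\infty}(\Spec k) \simeq \ast$ (using $k$ algebraically closed) gives $\hat{\Pi}^{\text{\'et}}_{\infty}(S^1)\simeq S^1$ in $\Pro(\cal{S}_\ast)$. Applying $L^{\mathbb{Z}/\ell}$ returns the $\ell$-adic completion, namely the pro-space $\{K(\mathbb{Z}/\ell^n,1)\}_n = K(\mathbb{Z}_\ell,1)$. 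Next I would compute $\textup{\'Et}_{\ell}(\mathbb{G}_m)$. Since $\mathbb{G}_m$ is representable, $\hat{\Pi}^{\text{\'et}}_{\infty}(\mathbb{G}_m) \simeq \Pi^{\text{\'et}}_{\infty}(\mathbb{G}_m)$. By Kummer theory, the connected prime-to-$p$ \'etale covers of $\mathbb{G}_m$ are the power maps $[n]:\mathbb{G}_m\to\mathbb{G}_m$, so $\mathbb{G}_m$ is an \'etale $K(\pi,1)$ with $\pi_1^{\text{\'et}}(\mathbb{G}_m)=\lim_{(n,p)=1}\mu_n(k)$ and higher \'etale homotopy vanishing. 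Passing to $L^{\mathbb{Z}/\ell}$ extracts the pro-$\ell$ quotient, yielding $K(T\mu_\ell,1)$.

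To finish, I would invoke symmetric monoidality of $\textup{\'Et}_\ell$ to get
\[ \textup{\'Et}_\ell(S^{p,q}) \;\simeq\; \textup{\'Et}_\ell(S^1)^{\wedge p} \wedge \textup{\'Et}_\ell(\mathbb{G}_m)^{\wedge q} \;\simeq\; K(\mathbb{Z}_\ell,1)^{\wedge p} \wedge K(T\mu_\ell,1)^{\wedge q}. \]

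The main obstacle is establishing this monoidality. On representables $X,Y\in\Sm/k$, the natural map $\Pi^{\text{\'et}}_{\infty}(X\times_k Y) \to \Pi^{\text{\'et}}_{\infty}(X)\times\Pi^{\text{\'et}}_{\infty}(Y)$ becomes an equivalence after $L^{\mathbb{Z}/\ell}$ by the \'etale K\"unneth isomorphism, which is valid over an algebraically closed base. Combined with cocontinuity of $\textup{\'Et}_\ell$ and the fact that $\Gamma^\ast:\cal{S}\to\Shv^{\text{\'et}}_\infty(\Sm/k)$ is symmetric monoidal, this gives a symmetric monoidal enhancement of $\textup{\'Et}_\ell$. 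The one remaining subtlety is that $L^{\mathbb{Z}/\ell}$ must be shown to commute with the particular smash products above; this holds because each $K(\mathbb{Z}_\ell,1)$ and $K(T\mu_\ell,1)$ is a nilpotent pro-space with finitely generated $\ell$-profinite homotopy, and on such objects $\ell$-completion is monoidal.
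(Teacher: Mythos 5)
Your proposal is essentially correct. Note that the paper itself gives no proof of this lemma---it defers to Proposition 6.2 of Hoyois, whose paper is titled ``The \'etale symmetric K\"unneth theorem'' precisely because the monoidality you identify as the crux is its main result. Your reduction to computing $\textup{\'Et}_\ell$ on $S^1$ (a constant presheaf, hence realizing to the topological circle since $\Pi^{\text{\'et}}_{\infty}(\Spec\ k)\simeq\ast$ for $k$ algebraically closed) and on $\mathbb{G}_m$ (an \'etale $K(\pi,1)$ over such $k$, with $\pi_1^{\text{\'et}}\cong\lim_{(n,p)=1}\mu_n(k)$ by Kummer theory), followed by K\"unneth monoidality, matches the route taken in the cited reference.

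Two small points of precision are worth making. First, the K\"unneth statement you invoke concerns products of representables; to pass to smash products one also uses that $\textup{\'Et}_\ell$ is cocontinuous and preserves the terminal object, since $A\wedge B$ is built from $A\times B$ by a finite colimit along $A\vee B$. Second, your closing justification---that $L^{\mathbb{Z}/\ell}$ is monoidal on the resulting pro-spaces because they are nilpotent with finitely generated $\ell$-profinite homotopy---is slightly misaimed: the relevant finiteness hypotheses are placed on the \emph{input} (the paper itself remarks that $\textup{\'Et}_\ell$ is only known to preserve finite products on $\Spc_{\textup{fin}}(k)$, citing lemma 6.1 of Hoyois), rather than verified after the fact on the realizations. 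The conclusion is nonetheless unaffected since $S^1$ and $\mathbb{G}_m$, hence $S^{p,q}$, lie in $\Spc_{\textup{fin}}(k)$.
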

For a proof of this, see Proposition 6.2 of \cite{Hoyois}. Choose an isomorphism $T\mu_{\ell}\simeq\mathbb{Z}_{\ell}$, and identify $\textup{\'Et}_{\ell}S^{p,q}\simeq K(\mathbb{Z}_{\ell},1)^{\wedge p+q}$.\\
\\
We now construct the stable \'etale realization functor using analogues of this alternative description. Instead of considering sheaves of spaces, we can consider sheaves of $E$-modules for some $E_{\infty}$-ring spectrum $E$. In this case, consider the global sections functor
\[\Gamma^E_*:\Shv^{\textup{\'et}}_{\infty}(\Sm/S,\Mod_E)\rightarrow\Mod_E\]
with left adjoint
\[\Gamma_E^*:\Mod_E\rightarrow\Shv^{\textup{\'et}}_{\infty}(\Sm/S,\Mod_E)\]
given by sending an $E$-module $M$ to its associated constant sheaf of $E$-modules on the big \'etale site over $S$. By formal reasons, this has a pro-left adjoint
\[\Gamma^E_!:\Shv^{\textup{\'et}}_{\infty}(\Sm/S,\Mod_E)\rightarrow\Pro(\Mod_E).\]
We then consider the morphism
\[\Pi_{\infty}^{E}:\Shv^{\tau}_{\infty}(\Sm/S,\Mod_E)\xrightarrow{a_{\textup{\'et}}}\Shv^{\textup{\'et}}_{\infty}(\Sm/S,\Mod_E)\xrightarrow{\Gamma^E_!}\Pro(\Mod_E).\]
Specializing to $E=\mathbb{S}$, we obtain the commutative diagram

\[\xymatrix{\hat{\Pi}^{\textup{\'et}}_{\infty}:\Shv^{\tau}_{\infty}(\Sm/S) \ar@{->}[r]^{a_{\textup{\'et}}} \ar@{->}[d]_{\Sigma_+^{\infty}} & \Shv^{\textup{\'et}}_{\infty}(\Sm/S) \ar@{->}[r]^{\Gamma_!} \ar@{->}[d]^{\Sigma_+^{\infty}} & \Pro(\cal{S}) \ar@{->}[d]^{\Pro(\Sigma_+^{\infty})}\\
\Pi^{\mathbb{S}}_{\infty}:\Shv^{\tau}_{\infty}(\Sm/S;\Sp) \ar@{->}[r]^{a_{\textup{\'et}}} & \Shv^{\textup{\'et}}_{\infty}(\Sm/S;\Sp) \ar@{->}[r]^{\Gamma^{\mathbb{S}}_!} & \Pro(\Sp).}\]
Furthermore, we also have the commutative diagram
\[\xymatrix{\Pro(\cal{S}) \ar@{->}[d]_{\Pro(\Sigma_+^{\infty})} \ar@{->}[r]^{L^{\mathbb{Z}/\ell}} & \Pro(\cal{S})^{\mathbb{Z}/\ell} \ar@{->}[d]^{\Sigma_{S_{\ell}^1+}^{\infty}}\\ \Pro(\Sp) \ar@{->}[r]^{L^{H\mathbb{Z}/\ell}} & \Pro(\Sp)^{H\mathbb{Z}/\ell}.}\]

The previous two diagrams fit together to form
\[\xymatrix{\Shv^{\tau}_{\infty}(\Sm/S) \ar@{->}[r]^{a_{\textup{\'et}}} \ar@{->}[d]_{\Sigma_+^{\infty}} & \Shv^{\textup{\'et}}_{\infty}(\Sm/S) \ar@{->}[r]^{\Gamma_!} \ar@{->}[d]_{\Sigma_+^{\infty}} & \Pro(\cal{S}) \ar@{->}[d]^{\Pro(\Sigma_+^{\infty})} \ar@{->}[r]^{L^{\mathbb{Z}/\ell}} & \Pro(\cal{S})^{\mathbb{Z}/\ell} \ar@{->}[d]^{\Sigma_{S_{\ell}^1+}^{\infty}}\\ \Shv^{\tau}_{\infty}(\Sm/S;\Sp) \ar@{->}[r]^{a_{\textup{\'et}}} & \Shv^{\textup{\'et}}_{\infty}(\Sm/S;\Sp) \ar@{->}[r]^{\Gamma^{\mathbb{S}}_!} & \Pro(\Sp) \ar@{->}[r]^{L^{H\mathbb{Z}/\ell}} & \Pro(\Sp)^{H\mathbb{Z}/\ell}.}\]
 
As in the construction of the unstable \'etale realization, we can see that $L^{H\mathbb{Z}/\ell}\Pi^{\textup{E}}_{\infty}$ factors through the $\mathbb{A}^1$-localized $\tau$-sheaves of $E$-modules. This gives us the \textit{$S^1$-stable} \'etale realization
\[\textup{\'Et}^{\Sp}_{\ell}:\Spt_{\tau}^{S^1}(S)\rightarrow\Pro(\Sp)^{H\mathbb{Z}/\ell}.\]
These give us the following commutative diagram of \'etale realizations: 
\[\xymatrix{\textup{\'Et}_{\ell}:\Spc_{\tau}(S) \ar@{->}[r]^{a_\textup{\'et}} \ar@{->}[d]_{\Sigma_+^{\infty}} & \Spc_{\textup{\'et}}(S) \ar@{->}[r]^{L^{\mathbb{Z}/\ell}\Gamma_!} \ar@{->}[d]_{\Sigma_+^{\infty}} & \Pro(\cal{S})^{\mathbb{Z}/\ell} \ar@{->}[d]^{\Sigma_{S_{\ell}^1+}^{\infty}}\\ \textup{\'Et}^{\Sp}_{\ell}:\Spt_{\tau}^{S^1}(S) \ar@{->}[r]^{a_\textup{\'et}} & \Spt_{\textup{\'et}}^{S^1}(S) \ar@{->}[r]^{L^{H\mathbb{Z}/\ell}\Gamma^{\mathbb{S}}_!} & \Pro(\Sp)^{H\mathbb{Z}/\ell}}\]
From now on, we restrict to $S=\Spec\ k$, where $k$ is a separably closed field. The final step in our construction is to invert $\mathbb{G}_m$ in the bottom row of the above diagram specialized to $S=\Spec\ k$. We do so by showing that there is a commutative diagram of the following form.
\[\xymatrix{\textup{\'Et}_{\ell}:\Spc_{\tau}(k) \ar@{->}[r]^{a_\textup{\'et}} \ar@{->}[d]_{\Sigma_+^{\infty}} & \Spc_{\textup{\'et}}(k) \ar@{->}[r]^{L^{\mathbb{Z}/\ell}\Gamma_!} \ar@{->}[d]_{\Sigma_+^{\infty}} & \Pro(\cal{S})^{\mathbb{Z}/\ell} \ar@{->}[d]^{\Sigma_{S_{\ell}^1+}^{\infty}}\\ \textup{\'Et}^{\Sp}_{\ell}:\Spt_{\tau}^{S^1}(k) \ar@{->}[r]^{a_\textup{\'et}} \ar@{->}[d]_{\Sigma^{\infty}_{\mathbb{G}_m}} & \Spt_{\textup{\'et}}^{S^1}(k) \ar@{->}[r]^{L^{H\mathbb{Z}/\ell}\Gamma^{\mathbb{S}}_!} \ar@{->}[d]_{\Sigma^{\infty}_{\mathbb{G}_m}} & \Pro(\Sp)^{H\mathbb{Z}/\ell} \ar@{->}[d]^{\Sigma_{S_{\ell}^1}^{\infty}}\\ \Spt_{\tau}(k) \ar@{->}[r]^{a_\textup{\'et}} & \Spt_{\textup{\'et}}(k) \ar@{.>}[r] & \Pro(\Sp)^{H\mathbb{Z}/\ell}}\]
We need to construct the dotted functor
\[\Spt_{\textup{\'et}}(k) \rightarrow \Pro(\Sp)^{H\mathbb{Z}/\ell}\]
so that the lower right square commutes. We do so by first checking that the square
\[\xymatrix{\Spt^{S^1}_{\textup{\'et}}(k)^{\omega} \ar@{->}[r]^{\Sigma_{\mathbb{G}_m}} \ar@{->}[d]_{F} & \Spt^{S^1}_{\textup{\'et}}(k)^{\omega} \ar@{->}[d]_{F} \\ \Pro(\Sp)^{H\mathbb{Z}/\ell} \ar@{->}[r]^{\Sigma_{S_{\ell}^1}} & \Pro(\Sp)^{H\mathbb{Z}/\ell},}\]
where $F:=L^{H\mathbb{Z}/\ell}\Gamma^{\mathbb{S}}_!$ and $\Spt^{S^1}_{\textup{\'et}}(k)^{\omega}$ is the full subcategory of $\Spt^{S^1}_{\textup{\'et}}(k)$ spanned by the $\omega$-compact objects, commutes. Indeed, take $\Sigma_{S^1}^{a+\infty}X_+$, $X$ a smooth $k$-scheme and $a\in\mathbb{Z}$. Such objects compactly generate $\Spt^{S^1}_{\textup{\'et}}(k)$ under colimits and extensions ($k$ is algebraically closed, and so a descent spectral sequence shows that $\Spt^{S^1}_{\textup{\'et}}(k)$ is compactly generated), and our functors are exact and preserve colimits. It therefore suffices to check commutativity of the above square on such objects. Note that if $\Spc_{\text{fin}}(k)$ is the full subcategory of $\Spc(k)$ generated by $\Sm/k$ under finite colimits, then $\text{\'Et}_{\ell}|_{\Spc_{\text{fin}}(k)}:\Spc_{\text{fin}}(\Sm/k)\rightarrow\Pro(\cal{S})^{\mathbb{Z}/\ell}$ preserves finite products. This is lemma 6.1 of \cite{Hoyois}. We remark that it is not necessarily true that $\text{\'Et}_{\ell}$ preserves finite products on all of $\Spc(k)$ because the Kunneth formula requires some finiteness conditions. Using this, we deduce that 
\begin{eqnarray*}
F(\Sigma_{\mathbb{G}_m}\Sigma_{S^1}^{a+\infty}X_+) &\simeq& L^{H\mathbb{Z}/\ell}\Gamma^{\mathbb{S}}_!(\Sigma_{S^1}^{\infty}(\Sigma_{\mathbb{G}_m}\Sigma_{S^1}^aX_+))\\ &\simeq& \Sigma_{S_{\ell}^1}^{\infty}L^{\mathbb{Z}/\ell}\Gamma_!(\Sigma_{\mathbb{G}_m}\Sigma_{S^1}^aX_+)\\ &\simeq& \Sigma_{S_{\ell}^1}^{a+1+\infty}L^{\mathbb{Z}/\ell}\Gamma_!(X_+)\\ &\simeq& \Sigma_{S_{\ell}^1}L^{H\mathbb{Z}/\ell}\Gamma^{\mathbb{S}}_!(\Sigma_{S^1}^{a+\infty}X_+)\\ &\simeq& \Sigma_{S_{\ell}^1}F(\Sigma_{S^1}^{a+\infty}X_+).
\end{eqnarray*}
By proposition 5.5.7.8 of \cite{HTT}, $\Spt_{\et}(k)^{\omega}$ is the colimit
\[\textup{colim}\left(\Spt^{S^1}_{\et}(k)^{\omega}\xrightarrow{\Sigma_{\mathbb{G}_m}}\Spt^{S^1}_{\et}(k)^{\omega}\xrightarrow{\Sigma_{\mathbb{G}_m}}\hdots\right)\]
in $\cal{C}at_{\infty}(\omega)$, the $\infty$-category of small $\infty$-categories admitting $\omega$-filtered colimits and with morphisms those functors that preserve $\omega$-filtered colimits. We can similarly define $\widehat{\cal{C}at}_{\infty}(\omega)$, but with big $\infty$-categories instead. $\cal{C}at_{\infty}(\omega)\subset\widehat{\cal{C}at}_{\infty}(\omega)$ preserves colimits. Taking colimits of the above square in the category $\widehat{\cal{C}at}_{\infty}(\omega)$, we obtain the commutative square
\[\xymatrix{\Spt^{S^1}_{\textup{\'et}}(k)^{\omega} \ar@{->}[r]^{\Sigma^{\infty}_{\mathbb{G}_m}} \ar@{->}[d]_{F} & \Spt_{\textup{\'et}}(k)^{\omega} \ar@{->}[d]_{F} \\ \Pro(\Sp)^{H\mathbb{Z}/\ell} \ar@{->}[r]^{\Sigma^{\infty}_{S_{\ell}^1}} & \Pro(\Sp)^{H\mathbb{Z}/\ell},}\]
from which we get the commutative square
\[\xymatrix{\Spt^{S^1}_{\textup{\'et}}(k) \ar@{->}[r]^{\Sigma^{\infty}_{\mathbb{G}_m}} \ar@{->}[d]_{F} & \Spt_{\textup{\'et}}(k) \ar@{->}[d]_{F} \\ \Pro(\Sp)^{H\mathbb{Z}/\ell} \ar@{->}[r]^{\Sigma^{\infty}_{S_{\ell}^1}} & \Pro(\Sp)^{H\mathbb{Z}/\ell},}\]
by left Kan extensions. We thus obtain the commutative diagram
\[\xymatrix{\textup{\'Et}_{\ell}:\Spc_{\tau}(k) \ar@{->}[r]^{a_\textup{\'et}} \ar@{->}[d]_{\Sigma_{\mathbb{P}^1+}^{\infty}} & \Spc_{\textup{\'et}}(k) \ar@{->}[r]^{L^{\mathbb{Z}/\ell}\Gamma_!} \ar@{->}[d]_{\Sigma_{\mathbb{P}^1+}^{\infty}} & \Pro(\cal{S})^{\mathbb{Z}/\ell} \ar@{->}[d]^{\Sigma_{S_{\ell}^2+}^{\infty}}\\ \underline{\textup{\'Et}}_{\ell}:\Spt_{\tau}(k) \ar@{->}[r]^{a_\textup{\'et}} & \Spt_{\textup{\'et}}(k) \ar@{->}[r]^{L^{H\mathbb{Z}/\ell}\Gamma^{\mathbb{S}}_!} & \Pro(\Sp)^{H\mathbb{Z}/\ell}.}\]
The lower row corresponds to an exact functor $\underline{\textup{\'Et}}_{\ell}$, which we call the \textit{stable} $\ell$-adic \'etale realization functor. Furthermore, it is not hard to see that this functor is oplax monoidal as it is constructed from the pro-left adjoint of a monoidal functor.

\section{Two Spectral Sequences}\label{specseq}
In this section, we discuss the convergence of Voevodsky's slice spectral sequence and that of the spectral sequence obtained from the stable $\ell$-adic \'etale realization of the slice tower. We show that both spectral sequences are strongly convergent under suitable conditions. It is by comparing these two spectral sequences that we will prove our main theorems.\\
\\
Let $\Spt_{\textup{fin}}(k)$ to be the smallest full subcategory of $\Spt(k)$ containing $\Sigma_{\mathbb{P}^1}^n\Sigma^{\infty}_+X$, $n\in\mathbb{Z}$, $X\in\Sm/k$, and closed under finite colimits and extensions. Call such motivic spectra \textit{finite motivic spectra}. Given a finite effective motivic spectrum $E\in\Spt_{\textup{fin}}^{\textup{eff}}(k)$, we obtain the slice tower
\[\hdots\rightarrow f_{t+1}(E)\rightarrow f_t(E)\rightarrow\hdots\rightarrow f_1(E)\rightarrow f_0(E)=E\]
as discussed in section~\ref{motivichomotopy}. This gives rise to the following unrolled exact couple:
\begingroup
    \fontsize{9.5pt}{11pt}\selectfont
$$
\xymatrix{\hdots\underline{\pi}_{*,0}(f_{q+1}(E))(k)\ar@{->}[r]^{i}& \underline{\pi}_{*,0}(f_{q}(E))(k)\ar@{->}[r] \ar@{->}[d]_{j}&\hdots\ar@{->}[r] &\underline{\pi}_{*,0}(f_{1}(E))(k)\ar@{->}[r]^{i}& \underline{\pi}_{*,0}(E)(k) \ar@{->}[d]_{j}\\
\hdots & \underline{\pi}_{*,0}(s_{q}(E))(k)\ar@{->}[ul]^{k} & & \hdots & \underline{\pi}_{*,0}(s_{1}(E))(k). \ar@{->}[ul]^{k}}
$$
\endgroup
Since $\mathbb{Z}_{\ell}$ is a flat $\mathbb{Z}$-module, tensoring with $\mathbb{Z}_{\ell}$ gives the exact couple
\begingroup
    \fontsize{9.5pt}{11pt}\selectfont
$$
\xymatrix{\hdots\underline{\pi}_{*,0}(f_{q+1}(E))(k)\otimes_{\mathbb{Z}}\mathbb{Z}_{\ell}\ar@{->}[r]^{i\otimes_{\mathbb{Z}}\mathbb{Z}_{\ell}}& \underline{\pi}_{*,0}(f_{q}(E))(k)\otimes_{\mathbb{Z}}\mathbb{Z}_{\ell}\ar@{->}[r] \ar@{->}[d]_{j\otimes_{\mathbb{Z}}\mathbb{Z}_{\ell}}&\hdots\ar@{->}[r] &\underline{\pi}_{*,0}(f_1(E))(k)\otimes_{\mathbb{Z}}\mathbb{Z}_{\ell}\ar@{->}[r]^{i\otimes_{\mathbb{Z}}\mathbb{Z}_{\ell}}& \underline{\pi}_{*,0}(E)(k)\otimes_{\mathbb{Z}}\mathbb{Z}_{\ell} \ar@{->}[d]_{j\otimes_{\mathbb{Z}}\mathbb{Z}_{\ell}}\\
\hdots & \underline{\pi}_{*,0}(s_{q}(E))(k)\otimes_{\mathbb{Z}}\mathbb{Z}_{\ell}\ar@{->}[ul]^{k\otimes_{\mathbb{Z}}\mathbb{Z}_{\ell}} & & \hdots & \underline{\pi}_{*,0}(s_{0}(E))(k)\otimes_{\mathbb{Z}}\mathbb{Z}_{\ell}. \ar@{->}[ul]^{k\otimes_{\mathbb{Z}}\mathbb{Z}_{\ell}}}
$$
\endgroup
For each $q$, consider the filtration 
$$F^{n,q}:=\text{Filt}^n\underline{\pi}_{*,0}(f_q(E))(k)[1/p]:=\text{Im}(i^{(n)}:\underline{\pi}_{*,0}(f_{n+q}(E))(k)\rightarrow \underline{\pi}_{*,0}(f_q(E))(k))[1/p].$$ 
By theorem 6.3 of \cite{slice}, since $E$ finite, we have that $F^{n,q}=0$ for $n\gg 0$, where this lower bound is not universal and depends on $*$. Note that $F^{n,q}\otimes_{\mathbb{Z}}\mathbb{Z}_{\ell}=\text{Im}(i^{(n)})\otimes_{\mathbb{Z}}\mathbb{Z}_{\ell}\cong \text{Im}(i^{(n)}\otimes_{\mathbb{Z}}\mathbb{Z}_{\ell})$. The claim is that this last exact couple induces a \textit{strongly convergent} spectral sequence
\[E_1^{p,q}=\underline{\pi}_{p+q,0}(s_qE)(k)\otimes_{\mathbb{Z}}\mathbb{Z}_{\ell}\implies \underline{\pi}_{p+q,0}(E)(k)\otimes_{\mathbb{Z}}\mathbb{Z}_{\ell}\]
where the differentials on the $r$-th page are given by
\[d_r^{p,q}:E_r^{p,q}\rightarrow E_r^{p-r-1,q+r}.\]
Let $Q^q:=\bigcap_n(F^{n,q}\otimes_{\mathbb{Z}}\mathbb{Z}_{\ell})$ and $RQ^s:=R\lim_nF^{n,q}\otimes_{\mathbb{Z}}\mathbb{Z}_{\ell}$. In our case, $Q^q=0$ for every $q$ by what was said above. It is also easy to see that since $F^{n,q}\otimes_{\mathbb{Z}}\mathbb{Z}_{\ell}=0$ for $n\gg 0$, $RQ^s=0$. Note that $\lim_q\underline{\pi}_{*,0}(f_q(E))(k)\otimes_{\mathbb{Z}}\mathbb{Z}_{\ell}=\lim_qQ^q=0$, and so the spectral sequence is \textit{Hausdorff}. By the Mittag-Lefler condition,
\[\xymatrix{0\ar@{->}[r] & R\lim_q Q^q \ar@{->}[r] & R\lim_q(\underline{\pi}_{*,0}(f_q(E))(k)\otimes_{\mathbb{Z}}\mathbb{Z}_{\ell}) \ar@{->}[r] & \lim_q RQ^q \ar@{->}[r] & 0,}\]
and these conditions, we deduce that $R\lim_q(\underline{\pi}_{*,0}(f_q(E))(k)\otimes_{\mathbb{Z}}\mathbb{Z}_{\ell})=0$, i.e. the associated spectral sequence is \textit{complete}. We conclude that the spectral sequence
\[E_1^{p,q}=\underline{\pi}_{p+q,0}(s_q(E))(k)\otimes_{\mathbb{Z}}\mathbb{Z}_{\ell}\implies \underline{\pi}_{p+q,0}(E)(k)\otimes_{\mathbb{Z}}\mathbb{Z}_{\ell}\]
is \textit{at least conditionally convergent}. Consider the exact sequence
\[\xymatrix{0 \ar@{->}[r] & \frac{F^{n,0}\otimes_{\mathbb{Z}}\mathbb{Z}_{\ell}}{F^{n+1,0}\otimes_{\mathbb{Z}}\mathbb{Z}_{\ell}} \ar@{->}[r] & E^n_{\infty} \ar@{->}[r] & Q^{n+1} \ar@{->}[r] & Q^n \ar@{->}[r] & RE^n_{\infty} \ar@{->}[r] & RQ^{n+1} \ar@{->}[r] & RQ^n \ar@{->}[r] & 0}\]
of lemma 5.6 of \cite{Boardman}. Since $Q^n=0$ for all $n$, we conclude from the exact sequence that $RE^n_{\infty}=0$. As a result of conditional convergence, $RE^n_{\infty}=0$, and the fact that $E^r_s=0$ for $s<0$, we can conclude via Theorem 7.1 of \cite{Boardman} that for every $E\in\Spt_{\textup{fin}}^{\textup{eff}}(k)$, we have the \textit{strongly convergent} spectral sequence
\[E_1^{p,q}=\underline{\pi}_{p+q,0}(s_q(E))(k)\otimes_{\mathbb{Z}}\mathbb{Z}_{\ell}\implies \underline{\pi}_{p+q,0}(E)(k)\otimes_{\mathbb{Z}}\mathbb{Z}_{\ell},\]
where the differentials on the $r$-th page are given by
\[d_r^{p,q}:E_r^{p,q}\rightarrow E_r^{p-r-1,q+r}.\]
We will now study the stable $\ell$-adic \'etale analogue of the slice tower above.
Applying the stable $\ell$-adic \'etale realization $\underline{\textup{\'Et}}_{\ell}$, we obtain the tower
\[\hdots\rightarrow \underline{\textup{\'Et}}_{\ell}\left(f_{t+1}(E)\right)\rightarrow \underline{\textup{\'Et}}_{\ell}\left(f_{t}(E)\right)\rightarrow\hdots\rightarrow \underline{\textup{\'Et}}_{\ell}\left(E\right).\]
Since $\underline{\textup{\'Et}}_{\ell}$ is an exact functor, we obtain a spectral sequence
\[E_1^{p,q}=\pi_{p+q}\left(\underline{\textup{\'Et}}_{\ell}\left(s_q(E)\right)\right)\implies \pi_{p+q}\left(\underline{\textup{\'Et}}_{\ell}\left(E\right)\right),\]
where the differentials on the $r$-th page are given by
\[d_r^{p,q}:E_r^{p,q}\rightarrow E_r^{p-r-1,q+r}.\]
In order to establish the strong convergence of this spectral sequence, we need the following connectivity result. Recall that a $\mathbb{P}^1$-spectrum $E\in\Spt(k)$ is \textit{topologically $N$-connected} if for all $a\leq N$ and $b\in\mathbb{Z}$, $\underline{\pi}_{a,b}(E)=0$.
\begin{proposition}\label{etaleconnectivity}
If $E\in\Spt(k)$ is topologically $(N-1)$-connected, then $\underline{\textup{\'Et}}_{\ell}(f_q(E))$ is $(q+N-1)$-connected. 
\end{proposition}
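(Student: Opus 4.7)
The plan is to reduce the proposition to a cellular argument showing (a) that $f_q(E)$ lies in $\Spt(k)_{\geq(N,q)}$---the full subcategory generated under colimits and extensions by $\Sigma^{p,q'}\Sigma_{\mathbb{P}^1}^{\infty}X_+$ with $X\in\Sm/k$, $p\geq N$, and $q'\geq q$---and (b) that $\underline{\textup{\'Et}}_{\ell}$ sends each such generator to a $(p+q'-1)$-connected, hence $(N+q-1)$-connected, pro-spectrum. The two combine to yield the desired bound for $\underline{\textup{\'Et}}_{\ell}(f_q(E))$.

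For (b), note that $\underline{\textup{\'Et}}_{\ell}$ is colimit-preserving by construction (assembled from pro-left adjoints, sheafification, and Bousfield localizations), and that $n$-connected objects in $\Pro(\Sp)^{H\mathbb{Z}/\ell}$ form a class closed under colimits and extensions. Using the previous section's identification $\textup{\'Et}_{\ell}S^{p,q'}\simeq K(\mathbb{Z}_\ell,1)^{\wedge(p+q')}$ together with the $(-1)$-connectivity of $\underline{\textup{\'Et}}_{\ell}(\Sigma_{\mathbb{P}^1}^{\infty}X_+)$ (as a suspension spectrum of a pointed profinite space with disjoint basepoint), one obtains $\underline{\textup{\'Et}}_{\ell}(\Sigma^{p,q'}\Sigma_{\mathbb{P}^1}^{\infty}X_+)\simeq\Sigma^{p+q'}\underline{\textup{\'Et}}_{\ell}(\Sigma_{\mathbb{P}^1}^{\infty}X_+)$, which is at least $(p+q'-1)$-connected. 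Closure then propagates the bound to all of $\Spt(k)_{\geq(N,q)}$.

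For (a), I split it into (a1) showing that $f_q$ preserves topological $(N-1)$-connectivity, so that $f_q(E)\in\Spt(k)_{\geq(-\infty,q)}\cap\Spt(k)_{\geq(N,-\infty)}$, and (a2) identifying this intersection with $\Spt(k)_{\geq(N,q)}$. For (a1), using the cofiber sequence $f_q(E)\to E\to E_{\leq(-\infty,q-1)}$, I would exhibit $E_{\leq(-\infty,q-1)}$ cellularly from $\Sigma^{p,q'}\Sigma_{\mathbb{P}^1}^{\infty}X_+$ with $q'<q$, where the topological connectivity of $E$ forces $p\geq N$, and conclude that the quotient (and hence $f_q(E)$) is topologically $(N-1)$-connected. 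For (a2), within the presentable stable subcategory $\Spt(k)_{\geq(-\infty,q)}$ the topological $t$-structure on $\Spt(k)$ restricts to the $t$-structure whose $N$-shift has non-negative part precisely $\Spt(k)_{\geq(N,q)}$, yielding the intersection identity.

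The main obstacle will be (a1), an $\infty$-categorical analog of Pela\'ez's result that slice truncation preserves topological connectivity. The interaction between the slice filtration and the topological $t$-structure is delicate, and either a careful cellular induction or a universal-property argument via the adjunction $(i_{-\infty,q},\tau_{\geq(-\infty,q)})$ will be required. Once (a1) is secured, parts (a2) and (b) are essentially formal.
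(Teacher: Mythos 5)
Your overall strategy coincides with the paper's: reduce to the bigraded generators, observe that $\underline{\textup{\'Et}}_{\ell}(\Sigma^{m,n}\Sigma^{\infty}_{\mathbb{P}^1}X_+)\simeq\Sigma_{S_\ell^1}^{m+n}\Sigma^{\infty}_{S_\ell^2}\textup{\'Et}_{\ell}X_+$ is $(m+n-1)$-connected, propagate this through colimits and extensions to all of $\Spt(k)_{\geq(N,q)}$, and reduce the statement to showing $f_q(E)\in\Spt(k)_{\geq(N,q)}$ whenever $E$ is topologically $(N-1)$-connected. Your part (b) is exactly the paper's first two steps, and it is fine as written.

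The gap is in part (a). The paper does not reprove the key cellularity fact $f_q(E)\in\Spt(k)_{\geq(N,q)}$; it cites Proposition 4.7 of Levine's \emph{A comparison of motivic and classical stable homotopy theories}, which is precisely this statement. Your plan to reprove it by splitting into (a1) and (a2) is where the argument stalls. For (a1), the proposed cellular presentation of $E_{\leq(-\infty,q-1)}$ is not available a priori: $E_{\leq(-\infty,q-1)}$ is a Bousfield localization, and there is no canonical cell structure on it from which you can read off that the $S^1$-weights of its cells are $\geq N$; the claim that ``the topological connectivity of $E$ forces $p\geq N$'' in such a presentation is exactly what needs proof and does not follow by inspection. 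For (a2), the identity $\Spt(k)_{\geq(-\infty,q)}\cap\Spt(k)_{\geq(N,-\infty)}=\Spt(k)_{\geq(N,q)}$ is also not formal: $t$-structures do not restrict to full subcategories in a way that makes ``ambient topological connectivity'' automatically agree with ``connectivity in the $t$-structure on $\Spt(k)_{\geq(-\infty,q)}$.'' One inclusion is clear, but the other is the real content, and it is again the substance of Levine's Lemma 4.4--4.5 and Proposition 4.7. You correctly flagged (a1) as the main obstacle, but since the argument you sketch does not close that gap, the cleanest fix is to cite Levine's Proposition 4.7 directly, as the paper does, rather than reconstructing it.
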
 

\begin{proof}
Note that $\Sigma^{m,n}\Sigma^{\infty}_+X$ has stable $\ell$-adic \'etale realization $\Sigma^{m+n}_{S_{\ell}^1}\Sigma^{\infty}_{S_{\ell}^2}\textup{\'Et}_{\ell}X_+$ which is $(m+n-1)$-connected. $\Spt(k)_{\geq (N,q)}$ is generated under homotopy colimits and extensions by $\{\Sigma^{m,n}\Sigma^{\infty}_+Y|Y\in\Sm/k,m\geq N\textup{ and }n\geq q\}$, and so every $E\in\Spt(k)_{\geq (N,q)}$ has \'etale realization that is $(N+q-1)$-connected. Therefore, by proposition 4.7 of \cite{Levine}, if $E$ is topologically $(N-1)$-connected, then $\underline{\textup{\'Et}}_{\ell}(f_q(E))$ is $(q+N-1)$-connected, as required.
\end{proof}

By proposition~\ref{etaleconnectivity}, $\underline{\textup{\'Et}}_{\ell}\left(f_q(E)\right)$ is $(q-1)$-connected in $\Pro(\Sp)^{H\mathbb{Z}/\ell}$, and so this spectral sequence is also strongly convergent. As a result, we conclude that
\begin{proposition}
If $E\in\Spt_{\textup{fin}}^{\textup{eff}}(k)$ is a finite effective motivic spectrum over an algebraically closed field $k$, the stable $\ell$-adic \'etale realization of the slice tower gives a strongly convergent spectral sequence
\[E_1^{p,q}=\pi_{p+q}\left(\underline{\textup{\'Et}}_{\ell}\left(s_q(E)\right)\right)\implies \pi_{p+q}\left(\underline{\textup{\'Et}}_{\ell}\left(E\right)\right),\]
where the differentials on the $r$-th page are given by
\[d_r^{p,q}:E_r^{p,q}\rightarrow E_r^{p-r-1,q+r}.\]
\end{proposition}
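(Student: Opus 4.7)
The plan is to mimic, in the pro-spectrum setting, the strong-convergence argument carried out in the first half of this section for Voevodsky's slice spectral sequence. First I would observe that every $E\in\Spt_{\textup{fin}}^{\textup{eff}}(k)$ is topologically bounded below: by definition $E$ is obtained from generators $\Sigma_{\mathbb{P}^1}^{n}\Sigma^{\infty}_+X$ ($X\in\Sm/k$, $n\in\mathbb{Z}$) by finitely many cofiber sequences and finite colimits, and each such generator is topologically $(2n-1)$-connected; since only finitely many generators appear, there exists $N\in\mathbb{Z}$ with $E$ topologically $(N-1)$-connected. Applying Proposition~\ref{etaleconnectivity} then gives that $\underline{\textup{\'Et}}_{\ell}(f_q(E))$ is $(q+N-1)$-connected in $\Pro(\Sp)^{H\mathbb{Z}/\ell}$.

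Next I would produce the spectral sequence as an exact couple. Exactness of $\underline{\textup{\'Et}}_{\ell}$ sends the slice cofiber sequences $f_{q+1}E\to f_qE\to s_qE$ to cofiber sequences of $(H\mathbb{Z}/\ell)^*$-local pro-spectra, which yield long exact sequences on $\pi_*$. Splicing these produces an unrolled exact couple whose associated spectral sequence has $E_1^{p,q}=\pi_{p+q}(\underline{\textup{\'Et}}_{\ell}(s_q E))$, abutment $\pi_{p+q}(\underline{\textup{\'Et}}_{\ell}(E))$, and differentials $d_r^{p,q}\colon E_r^{p,q}\to E_r^{p-r-1,q+r}$ exactly as in the slice case.

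To upgrade conditional convergence to strong convergence, I would verify Boardman's criteria. Set
\[F^{n,q}:=\textup{Im}\bigl(\pi_{*}(\underline{\textup{\'Et}}_{\ell}(f_{n+q}E))\to\pi_{*}(\underline{\textup{\'Et}}_{\ell}(f_{q}E))\bigr).\]
The connectivity bound above gives $\pi_{*}(\underline{\textup{\'Et}}_{\ell}(f_{n+q}E))=0$ as soon as $n+q+N-1>*$, so for each fixed total degree $*$ and each fixed $q$ the filtration $F^{n,q}$ vanishes for $n\gg 0$. Consequently $Q^q:=\bigcap_n F^{n,q}=0$ and $RQ^q:=R\lim_n F^{n,q}=0$, so the spectral sequence is Hausdorff and $R\lim_q\pi_{*}(\underline{\textup{\'Et}}_{\ell}(f_qE))=0$, giving conditional convergence in the sense of Boardman. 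Plugging into the exact sequence of Lemma~5.6 of \cite{Boardman} (used in the first half of this section) then forces $RE^n_\infty=0$, and Theorem~7.1 of \cite{Boardman} yields strong convergence.

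The only real obstacle is ensuring that the pro-spectrum/Bousfield-localized machinery does not interfere with the classical formalism: specifically, that cofiber sequences in $\Pro(\Sp)^{H\mathbb{Z}/\ell}$ really do give long exact sequences on $\pi_*$ and that the Boardman apparatus (exact couples, $RE_\infty$, the criterion of Theorem~7.1) applies to the resulting spectral sequence of graded abelian groups. Since $\Pro(\Sp)^{H\mathbb{Z}/\ell}$ is a stable $\infty$-category and the $\pi_n$ under consideration are those of the pro-spectrum regarded as a homotopy group of its mapping spectrum from the sphere, this transport is entirely formal; the computational content is concentrated in the connectivity estimate already furnished by Proposition~\ref{etaleconnectivity}.
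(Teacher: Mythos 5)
Your proposal is correct and follows essentially the same route as the paper: invoke Proposition~\ref{etaleconnectivity} to get connectivity of $\underline{\textup{\'Et}}_{\ell}(f_qE)$ growing linearly in $q$, then run the Boardman machinery exactly as in the first half of the section. One small slip: the generator $\Sigma_{\mathbb{P}^1}^{n}\Sigma^{\infty}_+X = \Sigma^{n,n}\Sigma^{\infty}_+X$ is topologically $(n-1)$-connected (only the $S^1$-suspensions contribute to topological connectivity, not the $\mathbb{G}_m$-suspensions), not $(2n-1)$-connected as you claim; this does not affect the conclusion, since the relevant point is simply that finiteness forces topological boundedness below, which you deduce correctly.
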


\section{Proof of the Main Theorem}\label{maintheorem}
In this section, we prove our main result saying that up to inversion of the exponential characteristic $p$, classical stable homotopy theory is subsumed by stable motivic homotopy theory.\\
\\
Let $\Sigma_{\mathbb{G}_m}:\cal{S}_*\rightarrow\Spc_*(k)$ be the functor that suspends pointed spaces by $\mathbb{G}_m$. We can stabilize this functor to obtain the functor $c:=\Sigma^{\infty}_{\mathbb{G}_m}:\Sp\rightarrow\Spt(k)$. The main theorem of this paper is that away from the exponential characteristic of $k$, $k$ algebraically closed, $c$ is fully faithful.
\begin{theorem}\label{full}
Let $k$ be an algebraically closed field of exponential characteristic $p$. Then
\[c[1/p]:\textup{SH}[1/p]\rightarrow\textup{SH}(k)[1/p]\]
is a fully faithful functor.
\end{theorem}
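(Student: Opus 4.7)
The plan is to imitate Levine's strategy \cite{Levine}, with the stable $\ell$-adic \'etale realization $\underline{\textup{\'Et}}_{\ell}$ of section~\ref{etalehomotopy} playing the role formerly played by Betti realization.

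I would first reduce full faithfulness to a homotopy-group computation. Because $c$ is the stabilization of a symmetric monoidal left adjoint (the constant presheaf functor), it is itself symmetric monoidal and colimit-preserving. Since $\textup{SH}$ is compactly generated by the shifts of $\mathbb{S}$, full faithfulness of $c[1/p]$ is equivalent to the statement that for every finite spectrum $F$ and every $n\in\mathbb{Z}$ the natural map
\[
\pi_n(F)[1/p]\longrightarrow\underline{\pi}_{n,0}(c(F))(k)[1/p]
\]
is an isomorphism. Since $\pi_n(F)$ is finitely generated for finite $F$, this can be checked after $\otimes\mathbb{Q}$ and after $\otimes\mathbb{Z}_{\ell}$ for every prime $\ell\neq p$; the rational case follows from standard computations in rational motivic stable homotopy theory (Morel), so the real work is the $\ell$-adic case.

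For each $\ell\neq p$, apply $\underline{\textup{\'Et}}_{\ell}$ to the slice tower of $c(F)$, which is a finite effective motivic spectrum since $c$ preserves both finiteness and effectivity. By section~\ref{specseq} this produces a morphism of strongly convergent spectral sequences
\[
\underline{\pi}_{p+q,0}(s_qc(F))(k)\otimes\mathbb{Z}_{\ell}\Longrightarrow\underline{\pi}_{p+q,0}(c(F))(k)\otimes\mathbb{Z}_{\ell}
\]
mapping to
\[
\pi_{p+q}\bigl(\underline{\textup{\'Et}}_{\ell}(s_qc(F))\bigr)\Longrightarrow\pi_{p+q}\bigl(\underline{\textup{\'Et}}_{\ell}(c(F))\bigr).
\]
The abutment of the second sequence is $\pi_{p+q}(F)\otimes\mathbb{Z}_{\ell}$: chasing $\underline{\textup{\'Et}}_{\ell}$ through the constant-sheaf functor identifies $\underline{\textup{\'Et}}_{\ell}(c(F))$ with the $(H\mathbb{Z}/\ell)^*$-localization of $F$ viewed in $\Pro(\Sp)$, whose homotopy groups agree with $\pi_*(F)\otimes\mathbb{Z}_{\ell}$ because $\pi_*(F)$ is finitely generated.

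The main obstacle is the $E_1$-comparison: that $\underline{\textup{\'Et}}_{\ell}$ induces an isomorphism $\underline{\pi}_{p+q,0}(s_qc(F))(k)\otimes\mathbb{Z}_{\ell}\xrightarrow{\sim}\pi_{p+q}(\underline{\textup{\'Et}}_{\ell}(s_qc(F)))$. After inverting $p$, the slices $s_q(\mathbf{1}_k)$ are known to be bigraded suspensions of the motivic Eilenberg-MacLane spectrum $H\mathbb{Z}$ with coefficients supplied by the classical stable homotopy groups, and smashing with $F$ describes the slices of $c(F)$ as $H\mathbb{Z}$-module spectra of this form. To compare on these, I would reduce modulo $\ell^n$ to land in effective torsion motivic spectra, invoke the classical \'etale Suslin-Voevodsky theorem to identify motivic cohomology on $\Spec k$ with singular cohomology (which over the algebraically closed point $\Spec k$ is concentrated in degree zero), and then pass to the $\mathbb{Z}_{\ell}$-adic limit via the Milnor short exact sequence. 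Once this $E_1$-isomorphism is in place, the comparison theorem for strongly convergent spectral sequences upgrades it to an isomorphism on abutments, completing the $\ell$-adic case and hence the theorem.
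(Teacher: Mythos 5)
Your proposal follows the same broad strategy as the paper — compare Voevodsky's slice spectral sequence for a motivic spectrum to the spectral sequence obtained by applying $\underline{\textup{\'Et}}_{\ell}$ to the slice tower — but it diverges from the paper's actual argument in several places that are worth noting.

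First, the arithmetic reduction. You reduce full faithfulness to a statement about maps of homotopy groups and then propose to verify it after $\otimes\mathbb{Q}$ and after $\otimes\mathbb{Z}_{\ell}$ for each $\ell\neq p$. The paper instead proves only the $\ell$-adic statement (Theorem~\ref{main}: $\underline{\pi}_{*,0}(\mathbf{1}_k)(k)\otimes\mathbb{Z}_{\ell}\xrightarrow{\sim}\pi_*(\mathbb{S})\otimes\mathbb{Z}_{\ell}$), then uses that $\mathbb{Z}_{\ell}$ is faithfully flat over $\mathbb{Z}_{(\ell)}$ to get the $\mathbb{Z}_{(\ell)}$-local statement, and then the local-to-global principle over $\mathbb{Z}[1/p]$ to conclude. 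No rational step is needed: if $f\otimes\mathbb{Z}_{\ell}$ is an isomorphism for all $\ell\neq p$ then $f[1/p]$ already is. Including the rational case not only is redundant but also forces you to input a computation of $\underline{\pi}_{n,0}(\mathbf{1}_k)(k)\otimes\mathbb{Q}$, and the rational vanishing in positive degrees is exactly motivic Serre finiteness — a result the paper explicitly advertises \emph{avoiding} (indeed Corollary~\ref{msf} is \emph{deduced}, not assumed). So your rational step risks either circularity or defeating one of the paper's selling points.

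Second, your $E_1$-comparison invokes the identification of the slices $s_q(\mathbf{1}_k)$ with bigraded suspensions of $H\mathbb{Z}$ on the classical stable stems. That is a substantially heavier input than the paper needs: the paper uses only that each $s_q(E)$ carries an $M\mathbb{Z}$-module structure (so $s_q E[1/p]=\textup{EM}(\pi_q^{\mu}(E)(q)[2q])$), that $s_q(\mathbf{1})$ is \emph{torsion} for $q\geq1$ (Lemma~\ref{2}), and the single computation $s_0(\mathbf{1})\simeq M\mathbb{Z}$. The torsion case is then handled by Proposition~\ref{emprop}, which is proved by factoring $\underline{\textup{\'Et}}_{\ell}$ as a Nisnevich-to-\'etale change of topology followed by $L^{H\mathbb{Z}/\ell}\Gamma^{\mathbb{S}}_!$ and checking each piece on compact generators — the Suslin--Voevodsky rigidity is used in the first step, not in the form you describe. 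The $q=0$ slice is not torsion, so Proposition~\ref{emprop} does not apply there; the paper treats it separately via Lemma~\ref{cruc} (computing $\underline{\textup{\'Et}}_{\ell}(M\mathbb{Z}[1/p])\simeq H\mathbb{Z}_{\ell}$) and, in the proof of Theorem~\ref{main}, even shifts the spectral sequence comparison to $f_1\mathbf{1}$ rather than $\mathbf{1}$ itself. Your sketch — reduce mod $\ell^n$, apply classical SV, take a $\mathbb{Z}_{\ell}$-adic limit via Milnor — waves at the same issues but does not explain how the $\lim^1$ term is controlled or how the non-torsion $q=0$ slice is handled.

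Finally, the paper proves the sphere case first and then promotes it to full faithfulness by a density argument (two localizing subcategories $\cal{R}$ and $\cal{L}$), whereas you run the spectral sequence comparison for an arbitrary finite $F$. Both are valid; the paper's route is more economical because it lets Theorem~\ref{main} carry the technical weight exactly once. In short: your blueprint is recognisably the paper's strategy, but your version imports unnecessary and potentially circular rational and slice-computation inputs, and the key $E_1$-comparison (Propositions~\ref{emprop} and \ref{sqprop}) is where the real work lives and is underspecified in your sketch.
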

We deduce this theorem as a corollary of the following special case.
\begin{theorem}\label{main}
Let $k$ be an algebraically closed field of exponential characteristic $p$. Then for every prime $\ell\neq p$, the stable $\ell$-adic \'etale realization induces an isomorphism
\[\underline{\textup{\'Et}}_{\ell\ast}:\underline{\pi}_{*,0}(\mathbf{1}_k)(k)\otimes_{\mathbb{Z}}\mathbb{Z}_{\ell}\rightarrow \pi_*\left(\mathbb{S}\right)\otimes_{\mathbb{Z}}\mathbb{Z}_{\ell}.\]
\end{theorem}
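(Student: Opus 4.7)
The plan is to prove Theorem~\ref{main} by comparing the two strongly convergent spectral sequences set up in section~\ref{specseq} applied to the motivic sphere spectrum $\mathbf{1}_k$. Since $\mathbf{1}_k=\Sigma^\infty_{\mathbb{P}^1}(\Spec k)_+$ lies in $\Spt^{\textup{eff}}_{\textup{fin}}(k)$, both spectral sequences from section~\ref{specseq} are available. The stable $\ell$-adic \'etale realization functor is exact and hence carries the slice tower of $\mathbf{1}_k$ to a compatible tower of pro-spectra, inducing a morphism of spectral sequences from the $\mathbb{Z}_\ell$-tensored slice spectral sequence
\[E_1^{p,q}=\underline{\pi}_{p+q,0}(s_q\mathbf{1}_k)(k)\otimes\mathbb{Z}_\ell\Longrightarrow \underline{\pi}_{p+q,0}(\mathbf{1}_k)(k)\otimes\mathbb{Z}_\ell\]
to the \'etale realization spectral sequence
\[\tilde E_1^{p,q}=\pi_{p+q}\bigl(\underline{\textup{\'Et}}_\ell(s_q\mathbf{1}_k)\bigr)\Longrightarrow\pi_{p+q}\bigl(\underline{\textup{\'Et}}_\ell(\mathbf{1}_k)\bigr).\]
Since both spectral sequences are strongly convergent, to obtain the theorem it suffices to identify the abutment of the second with $\pi_*(\mathbb{S})\otimes\mathbb{Z}_\ell$ and to show that $\underline{\textup{\'Et}}_{\ell\ast}$ induces an isomorphism on $E_1$-pages.

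For the target abutment I would first check $\underline{\textup{\'Et}}_\ell(\mathbf{1}_k)\simeq \mathbb{S}^\wedge_\ell$ in $\Pro(\Sp)^{H\mathbb{Z}/\ell}$: the \'etale topological type of $\Spec k$ is contractible ($k$ is separably closed), so the unstable realization of the base point is trivial, and unwinding the construction of $\underline{\textup{\'Et}}_\ell$ via $\Gamma^{\mathbb{S}}_!$ followed by Bousfield localization recovers the $\ell$-complete sphere, whose homotopy groups are $\pi_*(\mathbb{S})\otimes\mathbb{Z}_\ell$ (since the classical stable stems are finitely generated in each degree). For the $E_1$-page, I would invoke Levine's computation of the slices of the motivic sphere over an algebraically closed field: $s_q\mathbf{1}_k\simeq \Sigma^{2q,q}H\mathbb{Z}\wedge\pi_q^s$, where $\pi_q^s$ is the $q$-th classical stable stem. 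Thus the $E_1$-term of the slice spectral sequence is given by motivic cohomology of $\Spec k$ with coefficients in $\pi_q^s$, while the $E_1$-term of the \'etale spectral sequence is given by the homotopy groups of $\underline{\textup{\'Et}}_\ell(\Sigma^{2q,q}H\mathbb{Z}\wedge\pi_q^s)$.

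The comparison of $E_1$-pages then reduces to the \'etale version of the Suslin--Voevodsky theorem after $\otimes\mathbb{Z}_\ell$: for $k$ algebraically closed and $\ell\neq p$, the motivic cohomology $H^{p,q}(\Spec k;\mathbb{Z}/\ell^n)$ agrees with \'etale cohomology $H^p_{\textup{\'et}}(\Spec k;\mu_{\ell^n}^{\otimes q})$, which vanishes for $p>0$ and equals $\mathbb{Z}/\ell^n$ for $p=0$ (using that $T\mu_\ell\simeq\mathbb{Z}_\ell$). Tensoring over $\mathbb{Z}$ with $\mathbb{Z}_\ell$ and passing to the limit over $n$, using that $\pi_q^s$ is finite for $q>0$ and free of rank one for $q=0$, I would conclude that $\underline{\textup{\'Et}}_{\ell\ast}$ identifies the slice $E_1$-page (tensored with $\mathbb{Z}_\ell$) with $\pi_{p+q}(\underline{\textup{\'Et}}_\ell(s_q\mathbf{1}_k))$ as graded abelian groups.

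The main obstacle I expect is the rigorous identification of $\underline{\textup{\'Et}}_\ell$ on the slices: one must verify that the \'etale realization of the motivic Eilenberg--MacLane spectrum $H\mathbb{Z}$ is the $\ell$-completed Eilenberg--MacLane spectrum of $\mathbb{Z}_\ell$, and that this compatibility is natural enough to commute with smashing with $\pi_q^s$ and bigraded suspension. This uses both the oplax monoidal structure of $\underline{\textup{\'Et}}_\ell$ established at the end of section~\ref{etalehomotopy} and the computation $\underline{\textup{\'Et}}_\ell(\Sigma^{p,q})\simeq \Sigma^{p+q}_{S^2_\ell}$ that follows from the unstable lemma giving $\textup{\'Et}_\ell S^{p,q}\simeq K(\mathbb{Z}_\ell,1)^{\wedge(p+q)}$. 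Once the $E_1$ isomorphism is established, strong convergence and the five-lemma applied to the induced filtrations on the abutments completes the proof.
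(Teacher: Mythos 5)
Your overall strategy — compare the slice spectral sequence after $\otimes\mathbb{Z}_\ell$ to its stable $\ell$-adic \'etale realization and use strong convergence together with an isomorphism on $E_1$-pages — is the same as the paper's, and your identifications of the abutments are essentially right. However, there is a genuine gap in how you propose to identify the $E_1$-pages. The formula you attribute to Levine,
\[
s_q\mathbf{1}_k \simeq \Sigma^{2q,q}H\mathbb{Z}\wedge\pi_q^s,
\]
is not correct. Levine's theorem (and the later unconditional results) identifies the slice spectral sequence of the motivic sphere with the Adams--Novikov spectral sequence; the $q$-th slice therefore carries the $E_2$-page $\mathrm{Ext}^{*,2q}_{MU_*MU}(MU_*,MU_*)$ of the ANSS, spread across a range of simplicial degrees, not the single stable stem $\pi_q^s$. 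If the slices were literally $H\mathbb{Z}\wedge\pi_q^s$, the slice spectral sequence would collapse to the trivial filtration on $\pi_*^s$, which is far from true.

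More to the point, no explicit description of the slices is needed. The paper bypasses this entirely: Lemma~\ref{2} shows that for $q\geq 1$ the spectra $f_q(\mathbf{1})$ and $s_q(\mathbf{1})$ are \emph{torsion}, and Proposition~\ref{emprop} proves that the stable \'etale realization induces an isomorphism on $\underline{\pi}_{n,0}(-)(k)\otimes\mathbb{Z}_\ell$ for the motivic Eilenberg--MacLane spectrum of \emph{any} torsion motive $M\in\mathrm{DM}^{\mathrm{eff}}(k;\mathbb{Z}[1/p])_{\mathrm{tor}}$, by a density argument together with the commutative square relating $\pi^*$ to the Suslin--Voevodsky equivalence. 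This is what gives Proposition~\ref{sqprop}, the $E_1$-page isomorphism for $s_q(\mathbf{1})$ for all $q$, with the $q=0$ case handled separately via $s_0(\mathbf{1})\simeq M\mathbb{Z}$ and Lemma~\ref{cruc}. The paper then isolates $n=0$ using Morel's identification $\underline{\pi}_{0,0}(\mathbf{1}[1/p])(k)\simeq\mathbb{Z}[1/p]$ and applies the spectral sequence comparison to $f_1(\mathbf{1})$, which is topologically $(-1)$-connected. To repair your argument you should replace the incorrect slice formula by the torsion-ness of the higher slices and invoke the generalized Suslin--Voevodsky comparison for torsion motives (Proposition~\ref{emprop}), which you gestured at but did not formulate or prove at the necessary level of generality.
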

Indeed, assuming thereom~\ref{main}, we prove theorem~\ref{full} as follows.
\begin{proof}[Proof of theorem~\ref{full}.] Note that for any space $X\in\cal{S}$,
\[\underline{\textup{\'Et}}_{\ell}\circ c(\Sigma_{S^1}^{\infty}X_+)=\underline{\textup{\'Et}}_{\ell}(\Sigma^{\infty}_{\mathbb{P}_1}X_+)\simeq \Sigma^{\infty}_{S_{\ell}^2}\textup{\'Et}_{\ell}X_+\]
the latter spectrum being in $\Pro(\Sp)^{H\mathbb{Z}/\ell}$. For each $\ell\neq p$ a prime, the composition
\begingroup
    \fontsize{9.5pt}{11pt}\selectfont
		
\[\left[\Sigma^n_{S^1}\mathbb{S},\mathbb{S}\right]_{\Sp}\otimes\mathbb{Z}_{\ell}\xrightarrow{c_*\otimes\mathbb{Z}_{\ell}} \left[\Sigma^n_{S^1}\Sigma^{\infty}_{\mathbb{G}_m}\mathbb{S},\Sigma^{\infty}_{\mathbb{G}_m}\mathbb{S}\right]_{\Spt(k)}\otimes\mathbb{Z}_{\ell}=\left[\Sigma^{n}_{S^1}\mathbf{1}_k,\mathbf{1}_k\right]_{\Spt(k)}\otimes\mathbb{Z}_{\ell}\xrightarrow{\underline{\textup{\'Et}}_{\ell\ast}} \left[\Sigma^{n}_{S_{\ell}^1}L^{H\mathbb{Z}/\ell}\mathbb{S},L^{H\mathbb{Z}/\ell}\mathbb{S}\right]_{\Pro(\Sp)^{H\mathbb{Z}/\ell}}\]

\endgroup

is an isomorphism. Since theorem~\ref{main} implies that the second map is an isomorphism as well, we deduce that $c_*\otimes\mathbb{Z}_{\ell}$ is an isomorphism. Since $\mathbb{Z}_{(\ell)}$ is a local Noetherian ring, its completion $\mathbb{Z}_{\ell}$ is a faithfully flat $\mathbb{Z}_{(\ell)}$-module. Therefore, $c_*\otimes\mathbb{Z}_{(\ell)}$ is an isomorphism for each $\ell\neq p$, which, in turn, implies that $c[1/p]_*$ is an isomorphism. Let $\cal{R}$ be the full subcategory of $\Sp$ having all objects $E\in\Sp$ such that
\[c[1/p]_*:[\Sigma^n_{S^1}\mathbb{S},E][1/p]\rightarrow [c(\Sigma^n_{S^1}\mathbb{S}),c(E)][1/p]\]
is an isomorphism for every $n$. $c[1/p]_*$ preserves small colimits and both $\Sigma^n_{S^1}\mathbb{S}$ and $c(\Sigma^n_{S^1}\mathbb{S})\simeq \Sigma^n_{S^1}\mathbf{1}_k$ are compact. By the above calculations, $\mathbb{S}\in\cal{R}$. Since $\Sp$ is generated by $\mathbb{S}$, we conclude that $\cal{R}=\Sp$, that is 
\[c[1/p]_*:[\Sigma^n_{S^1}\mathbb{S},E][1/p]\rightarrow [c(\Sigma^n_{S^1}\mathbb{S}),c(E)][1/p]\]
is an isomorphism for every $E\in\Sp$. We can define $\cal{L}$ as the full subcategory of $\Sp$ of spectra $E$ such that
\[c[1/p]_*:[E,F][1/p]\rightarrow [c(E),c(F)][1/p]\]
is an isomorphism for every $F\in\Sp$. A similar density argument as above shows that $\cal{L}=\Sp$. We conclude that $c[1/p]$ is a fully faithful functor.
\end{proof}

A direct corollary of theorem~\ref{main} is motivic Serre finiteness for algebraically closed fields.
\begin{corollary}[Special Case of Motivic Serre Finiteness]\label{msf}
For $k$ an algebraically closed field and $n>0$ an integer, $\underline{\pi}_{n,0}(\mathbf{1}_k)(k)\otimes\mathbb{Q}=0$. 
\end{corollary}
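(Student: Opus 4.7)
The plan is to deduce this immediately from Theorem~\ref{main} together with the classical Serre finiteness theorem. There is no serious obstacle here; the corollary is essentially formal once the $\ell$-adic \'etale comparison is in hand, modulo a small faithful-flatness step to descend from $\mathbb{Q}_\ell$-coefficients to $\mathbb{Q}$-coefficients.

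First I would fix $n>0$ and invoke the classical Serre finiteness theorem: $\pi_n(\mathbb{S})$ is a finite abelian group. Picking any prime $\ell \neq p$ (which exists since either $p=1$ or $p$ is a single prime), the right-hand side of the isomorphism
\[
\underline{\textup{\'Et}}_{\ell\ast}:\underline{\pi}_{n,0}(\mathbf{1}_k)(k)\otimes_{\mathbb{Z}}\mathbb{Z}_{\ell}\;\xrightarrow{\;\cong\;}\;\pi_n(\mathbb{S})\otimes_{\mathbb{Z}}\mathbb{Z}_{\ell}
\]
supplied by Theorem~\ref{main} is a finite $\mathbb{Z}_{\ell}$-module (namely the $\ell$-primary component of $\pi_n(\mathbb{S})$). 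In particular it is $\ell$-power torsion, so it vanishes after tensoring with $\mathbb{Q}_{\ell}$, and hence $\underline{\pi}_{n,0}(\mathbf{1}_k)(k)\otimes_{\mathbb{Z}}\mathbb{Q}_{\ell}=0$.

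To descend to $\mathbb{Q}$-coefficients, I would use that $\mathbb{Q}_{\ell}$ is a nonzero $\mathbb{Q}$-vector space, hence a faithfully flat $\mathbb{Q}$-algebra. Since
\[
\bigl(\underline{\pi}_{n,0}(\mathbf{1}_k)(k)\otimes_{\mathbb{Z}}\mathbb{Q}\bigr)\otimes_{\mathbb{Q}}\mathbb{Q}_{\ell}\;\cong\;\underline{\pi}_{n,0}(\mathbf{1}_k)(k)\otimes_{\mathbb{Z}}\mathbb{Q}_{\ell}\;=\;0,
\]
faithful flatness forces $\underline{\pi}_{n,0}(\mathbf{1}_k)(k)\otimes_{\mathbb{Z}}\mathbb{Q}=0$, which is the claim.

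A brief remark on the logical economy: this argument \emph{produces} motivic Serre finiteness over algebraically closed fields as an output of Theorem~\ref{main}, rather than taking it as an input (contrasting with the approach of \cite{OstWilson}, which relies on motivic Serre finiteness via \cite{ALP}). Only a single prime $\ell \neq p$ is needed for the corollary; Theorem~\ref{main} actually gives the same vanishing at every such prime and pins down the $\ell$-adic integral structure as well.
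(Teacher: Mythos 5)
Your proof is correct and fills in exactly what the paper leaves implicit when it calls this ``a direct corollary of theorem~\ref{main}'': classical Serre finiteness gives that $\pi_n(\mathbb{S})$ is finite for $n>0$, so $\pi_n(\mathbb{S})\otimes\mathbb{Z}_\ell$ is a finite ($\ell$-power torsion) group, hence $\underline{\pi}_{n,0}(\mathbf{1}_k)(k)\otimes\mathbb{Z}_\ell$ is too by Theorem~\ref{main}, and faithful flatness of $\mathbb{Q}_\ell$ over $\mathbb{Q}$ descends the resulting vanishing of $\underline{\pi}_{n,0}(\mathbf{1}_k)(k)\otimes\mathbb{Q}_\ell$ to $\underline{\pi}_{n,0}(\mathbf{1}_k)(k)\otimes\mathbb{Q}=0$. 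This is the intended argument, and your remark that a single prime $\ell\neq p$ suffices is accurate.
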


\begin{remark}\label{msfr}
\textup{It is now a theorem of Ananyevskiy, Levine, and Panin that motivic Serre finiteness is valid for all fields $k$ \cite{ALP}.}
\end{remark}

In order to prove theorem~\ref{main}, we compare the two spectral sequences of the previous section to prove a number of comparison theorems. In combination, these comparison theorems will give us the above theorem. Given a triangulated category $\cal{T}$, we define $\cal{T}_{\textup{tor}}$ to be the full subcategory with objects $E$ such that $\textup{Hom}_{\cal{T}}(A,E)\otimes\mathbb{Q}=0$ for every compact object $A$ in $\cal{T}$. These objects will be called \textit{torsion} objects. Given a stable $\infty$-category $\cal{C}$, $\cal{C}_{\textup{tor}}$ is the full subcategory consisting of the torsion objects in the triangulated category $\textup{h}\cal{C}$. We need the following result.
\begin{lemma}(Theorem 5.8 of \cite{HKO})
Let $k$ be a perfect field of exponential characteristic $p$, and let $R$ be a commutative ring in which $p$ is invertible. There is a symmetric monoidal Quillen equivlance
\[\Phi:\Mod_{MR}\rightleftarrows \textup{DM}(k;R):\Psi.\]
\end{lemma}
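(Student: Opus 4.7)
The plan is to exhibit the equivalence by identifying two presentations of motives with $R$-coefficients; the result is standard (R\"ondigs--\O{}stv\ae{}r in characteristic zero, extended to $p$-inverted coefficients in positive characteristic by Hoyois--Kelly--\O{}stv\ae{}r), so I only sketch the strategy and the main technical input.

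First I would construct $\Psi$. Since $MR$ is a commutative algebra object in $\Spt(k)$, $\Mod_{MR}$ is a presentable stable symmetric monoidal $\infty$-category. On the other side, $\textup{DM}(k;R)$ can be modeled as the $\mathbb{A}^1$-localized, $\mathbb{G}_m$-stabilized derived $\infty$-category of Nisnevich sheaves with transfers and $R$-coefficients. The ``forget transfers'' functor $\textup{DM}(k;R)\to \Spt(k)$ factors through $\Mod_{MR}$, because a motivic complex with transfers naturally carries an $MR$-module structure (transfers are exactly what $MR$-linearity provides); this produces $\Psi$. The left adjoint $\Phi:\Mod_{MR}\to\textup{DM}(k;R)$ is then obtained by left Kan extension, using that $\Mod_{MR}$ is the universal presentable stable $\Spt(k)$-linear $\infty$-category in which the unit $\mathbf{1}_k\to MR$ is promoted to an $MR$-action.

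Next I would show that $(\Phi,\Psi)$ is a pair of mutually inverse equivalences. Both $\infty$-categories are compactly generated by twists of smooth schemes, namely $MR\wedge \Sigma^{m,n}\Sigma^{\infty}_{\mathbb{P}^1}X_+$ on the one side and $M_R(X)(n)[m]$ on the other, and both $\Phi$ and $\Psi$ preserve these generators and commute with colimits. By the Yoneda/adjunction principle, it therefore suffices to match mapping spectra out of the generators; this reduces the statement to the assertion that $MR$-cohomology of a smooth $k$-scheme agrees with its motivic cohomology with $R$-coefficients. Symmetric monoidality of the equivalence then follows from the fact that both functors are symmetric monoidal on the generating set and preserve colimits.

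The principal obstacle, and the reason one must invert $p$ in $R$, is the Hopkins--Morel--Hoyois presentation $MR\simeq MGL/(x_1,x_2,\ldots)\otimes R$, which identifies $MR$ with an Eilenberg--MacLane-style quotient of algebraic cobordism and pins down $\pi_{*,*}(MR)$ over $\Spec\ k$. This is precisely what aligns the two sets of generators and validates the cohomological comparison of the previous step. In positive characteristic its proof uses Gabber's refinement of de Jong alterations in place of resolution of singularities, and the $p$-torsion phenomena that survive alterations are what force the hypothesis that $p$ be a unit in $R$.
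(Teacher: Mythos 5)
The paper itself gives no proof of this lemma; it is imported verbatim as Theorem~5.8 of Hoyois--Kelly--\O{}stv\ae{}r, so there is nothing in the source to compare your argument against. That said, your sketch does capture the strategy of the cited proof: R\"ondigs--\O{}stv\ae{}r's comparison between $MR$-modules and motives (constructing the adjunction via transfers and reducing to a check on compact generators), together with the Hopkins--Morel--Hoyois isomorphism as the essential technical input, which in positive characteristic is proved via Gabber's refinement of de Jong's alterations and is the step that forces inverting $p$ in $R$.

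Two small cautions if you were to expand this into a real proof. First, the statement is phrased as a \emph{Quillen} equivalence of model categories, whereas you work entirely $\infty$-categorically; one should either stick to the model-categorical framework of HKO/R\"ondigs--\O{}stv\ae{}r or explicitly pass to underlying $\infty$-categories and verify the equivalence lifts to a symmetric monoidal one there. Second, your description of the functors (``forget transfers'' landing in $\Mod_{MR}$, with $\Phi$ as a left Kan extension) is plausible heuristically, but the actual construction in the literature proceeds through an explicit zigzag of Quillen adjunctions relating symmetric $MR$-module spectra, motivic symmetric spectra with transfers, and $\textup{DM}$; the claim that a motivic complex with transfers ``naturally carries an $MR$-module structure'' is exactly the content that requires the Hopkins--Morel--Hoyois computation of $\pi_{*,*}MR$, so it cannot be taken as the starting point before that input is in hand. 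As a blind reconstruction of a result the paper merely cites, however, this is a faithful outline of where the difficulty lies.
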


Taking $R=\mathbb{Z}[1/p]$, we obtain an equivalence $\textup{DM}\left(k;\mathbb{Z}[1/p]\right)\simeq\Mod_{M\mathbb{Z}[1/p]}$. Let 
\[\textup{EM}:\textup{DM}\left(k;\mathbb{Z}[1/p]\right)\rightarrow \Spt(k)\]
be the functor given by the equivalence $\textup{DM}\left(k;\mathbb{Z}[1/p]\right)\simeq\Mod_{M\mathbb{Z}[1/p]}$ followed by the forgetful functor $\Mod_{M\mathbb{Z}[1/p]}\rightarrow\Spt(k)$. Furthermore, let 
\[\mathbb{Z}[1/p]^{\textup{tr}}:\Spt(k)\rightarrow\textup{DM}\left(k;\mathbb{Z}[1/p]\right)\]
be the left adjoint of $\textup{EM}$. These functors have the property that $(EM\circ \mathbb{Z}[1/p]^{\textup{tr}})(X)=M\mathbb{Z}[1/p]\wedge X_+$ for every $X\in\Sm/k$. We claim that $\textup{EM}$ preserves small (homotopy) colimits. Indeed, it is easy to show that an $\infty$-functor $R:\cal{A}\rightarrow\cal{B}$ between cocomplete compactly generated stable $\infty$-categories preserves all small colimits if it has a left adjoint $L:\cal{B}\rightarrow\cal{A}$ with the property that $L(B)$ is compact for $B$ in a set of compact generators of $\cal{B}$. Applying this to $R=\textup{EM}$ and $L=\mathbb{Z}[1/p]^{\textup{tr}}$, choosing the compact generators $\Sigma^{p,q}\Sigma_{\mathbb{P}^1}^{\infty}X_+$, $p,q\in\mathbb{Z}$, $X\in\Sm/k$, and noting that $\mathbb{Z}[1/p]^{\textup{tr}}(\Sigma^{p,q}\Sigma_{\mathbb{P}^1}^{\infty}X_+)\simeq\mathbb{Z}[1/p]^{\textup{tr}}(X)(q)[p+q]$ is compact in $\textup{DM}\left(k;\mathbb{Z}[1/p]\right)$, we deduce that $\textup{EM}$ preserves all small (homotopy) colimits.\\
\\
By theorem 3.6.22 of \cite{Palaez}, for each motivic spectrum $E$, $s_qE$ has the natural structure of an $M\mathbb{Z}$-module, and so $s_q(E)[1/p]$ is an $M\mathbb{Z}[1/p]$-module. By the above discussion, $s_q(E)[1/p]=EM(\pi_q^{\mu}(E)(q)[2q])$ for some motive $\pi_q^{\mu}(E)\in\textup{DM}^{\textup{eff}}(k;\mathbb{Z}[1/p])$.  
\begin{lemma}(Lemmas 6.1 and 6.2 of \cite{Levine})\label{2}
Suppose $k$ is a field of finite cohomological dimension. Then for $X\in\Sm/k$ of dimension $d$ over $k$, and for $q\geq d+1$, $f_q(\Sigma^{\infty}_{\mathbb{P}^1}X_+)$ and $s_q(\Sigma^{\infty}_{\mathbb{P}^1}X_+)$ are in $\Spt(k)_{\textup{tor}}$, and $\pi_q^{\mu}(\Sigma^{\infty}_{\mathbb{P}^1}X_+)$ is in $\textup{DM}^{\textup{eff}}(k)_{\textup{tor}}$. In particular, for $q\geq 1$, $f_q(\mathbf{1})$ and $s_q(\mathbf{1})$ are in $\Spt(k)_{\textup{tor}}$, and $\pi_q^{\mu}(\mathbf{1})$ is in $\textup{DM}^{\textup{eff}}(k)_{\textup{tor}}$.
\end{lemma}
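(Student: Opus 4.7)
The plan is to establish the three torsion claims in the order $\pi_q^{\mu}$, then $s_q$, then $f_q$, since the rational vanishing of the motive $\pi_q^{\mu}(\Sigma^{\infty}_{\mathbb{P}^1}X_+)$ is the real content while the two spectrum-level statements follow from it by essentially formal means. The particular case $X=\Spec k$, $d=0$, giving the claims for $\mathbf{1}_k$, drops out immediately.

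The main effort goes into showing $\pi_q^{\mu}(\Sigma^{\infty}_{\mathbb{P}^1}X_+)$ is torsion in $\textup{DM}^{\textup{eff}}(k;\mathbb{Z}[1/p])$ for $q\geq d+1$. Using the equivalence $\Mod_{M\mathbb{Z}[1/p]}\simeq \textup{DM}(k;\mathbb{Z}[1/p])$ from the cited result together with the identity $s_q(E)[1/p]\simeq \textup{EM}(\pi_q^{\mu}(E)(q)[2q])$, I would translate torsion of $\pi_q^{\mu}$ into the statement that $[\Sigma^{a,b+q}\Sigma^{\infty}_{\mathbb{P}^1}Y_+,\, s_q(\Sigma^{\infty}_{\mathbb{P}^1} X_+)]\otimes\mathbb{Q}=0$ for every $Y\in\Sm/k$ and all $a,b\in\mathbb{Z}$. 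The approach I would take is a dévissage: first reduce to the case of $X$ smooth projective of pure dimension $d$ via de Jong alterations (the discrepancy is torsion after inverting $p$), then exploit Poincaré duality $M(X)^{\vee}\simeq M(X)(-d)[-2d]$ to rewrite the relevant Hom groups as rational motivic cohomology of $Y\times_k X$ with Tate weight exceeding $\dim(Y\times_k X)=\dim Y+d$. Such groups vanish by the standard dimension bound for motivic cohomology combined with the finite cohomological dimension hypothesis on $k$, which precisely produces the cutoff $q>d$.

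From this, $s_q(\Sigma^{\infty}_{\mathbb{P}^1}X_+)\in \Spt(k)_{\textup{tor}}$ follows immediately, since torsion in $\textup{DM}^{\textup{eff}}(k;\mathbb{Z}[1/p])$ is detected by the same class of compact generators as torsion in $\Spt(k)$ under the module equivalence, and inverting $p$ does not affect rationalization. For $f_q$, I would use the cofiber sequences $f_{q+N+1}E\to f_{q+N}E\to s_{q+N}E$ inductively: the finite truncations $f_qE/f_{q+N}E$ are iterated extensions of the torsion slices $s_q,\ldots,s_{q+N-1}$ and are hence torsion. Finally, by the convergence of Voevodsky's slice tower for finite motivic spectra over a field of finite cohomological dimension (Theorem 6.3 of \cite{slice}, which is already invoked earlier in the paper), $\lim_N f_{q+N}(\Sigma^{\infty}_{\mathbb{P}^1}X_+)\simeq 0$ in the relevant range, so the torsion property is inherited by $f_q(\Sigma^{\infty}_{\mathbb{P}^1}X_+)$. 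The hard part will be the dimension-weight vanishing in the middle step, since one must carefully match the tri-graded indexing of slice-level Hom groups with bi-graded motivic cohomology and verify that the bound $q\geq d+1$ is exactly what is needed to push the relevant motivic cohomology groups outside their support; the dévissage via alterations is also where the inversion of $p$ enters in an essential way.
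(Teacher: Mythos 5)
The paper does not reprove this lemma; it simply cites Levine's Lemmas~6.1 and 6.2. Your outline does match the substance of Levine's argument there: reduce to smooth projective via de Jong alterations after inverting $p$, apply Poincar\'e duality $M(X)^{\vee}\simeq M(X)(-d)[-2d]$ to convert the relevant $\Hom$ groups into motivic cohomology of a product, use a vanishing theorem to kill those groups rationally for $q\geq d+1$, and then climb the slice tower using the convergence result (Theorem 6.3 of \cite{slice}) to pass from the slices to $f_q$. The observation that the slices are $M\mathbb{Z}$-modules, so that torsion of $\pi_q^{\mu}$, $s_q$, and $s_q[1/p]$ are all equivalent and the minus part of the rational splitting is handled automatically, is also the right way to tie the $\Spt(k)$- and $\textup{DM}$-level statements together.

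Two points in your sketch are stated imprecisely and should be corrected. First, after dualizing, the group in question has the form $H^{2d-b}(Y\times_k X;\mathbb{Q}(d-a))$ with $a\geq q>d$, and the reason it vanishes is that the Tate weight $d-a$ is \emph{negative}, not that it "exceeds $\dim(Y\times_k X)$"; the cutoff $q>d$ is produced entirely by the dimension of $X$ (the twist $(d)[2d]$ from duality), not by the cohomological dimension of $k$. Second, the finite-cohomological-dimension hypothesis on $k$ enters only in the $f_q$ step: Theorem~6.3 of \cite{slice} needs it to guarantee that the image of $\pi_{*,0}(f_{q+N}E)\to\pi_{*,0}(f_q E)$ vanishes for $N\gg 0$ in each degree, which is what lets you conclude that $\pi_{*,0}(f_q E)$ is a finite iterated extension of the torsion groups $\pi_{*,0}(s_{q'}E)$ and hence torsion. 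Phrasing this as "$\lim_N f_{q+N}\simeq 0$ in the relevant range" is close but should be replaced by the degree-wise eventual vanishing of the images, since a complete Hausdorff filtration with torsion quotients does not by itself force the ambient group to be torsion; finiteness of the filtration in each degree is the ingredient that does.
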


\begin{lemma}(Lemma 6.3 of \cite{Levine})\label{torlem}
If $E\in\Spt(k)_{\textup{tor}}$, then for all $q$, $f_q(E),s_q(E)\in\Spt(k)_{\textup{tor}}$ and $\pi_q^{\mu}(E)\in\textup{DM}^{\textup{eff}}(k)_{\textup{tor}}$.
\end{lemma}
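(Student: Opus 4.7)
The strategy is to identify $\Spt(k)_{\textup{tor}}$ with the kernel of rationalization, prove that $f_q$ commutes with rationalization, and then transport the torsion conclusion to $\pi_q^{\mu}(E)$ via the adjunction $(\mathbb{Z}[1/p]^{\textup{tr}}, \textup{EM})$. Write $E_{\mathbb{Q}} := E \wedge \mathbf{1}_{\mathbb{Q}}$ for rationalization. First I will check the characterization $E \in \Spt(k)_{\textup{tor}} \iff E_{\mathbb{Q}} \simeq 0$: since $\Spt(k)$ is compactly generated, $E_{\mathbb{Q}} \simeq 0$ amounts to $[A, E_{\mathbb{Q}}] = 0$ for every compact $A$, and compactness lets this computation equal $[A, E] \otimes \mathbb{Q}$ (rationalization is a filtered colimit of multiplication-by-$n$ maps, past which compacts commute). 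Because rationalization preserves colimits and cofiber sequences, $\Spt(k)_{\textup{tor}}$ is a localizing subcategory, so the $s_q$ statement will follow automatically from the $f_q$ statement via $f_{q+1}E \to f_q E \to s_q E$.

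The heart of the argument is the commutation $(f_q E)_{\mathbb{Q}} \simeq f_q(E_{\mathbb{Q}})$. I will obtain this by rationalizing the universal-property characterization of $f_q E \to E$: namely, $f_q E \in \Spt(k)_{\geq(-\infty, q)}$ and $E/f_q E$ is right-orthogonal to $\Spt(k)_{\geq(-\infty, q)}$. The first property is preserved under rationalization because $\Spt(k)_{\geq(-\infty, q)}$ is closed under small colimits and $E \mapsto E_{\mathbb{Q}}$ is a filtered colimit of self-maps. For the right-orthogonality after rationalization, given $G \in \Spt(k)_{\geq(-\infty, q)}$, write $G$ as a small colimit of the compact generators $G_i = \Sigma^{a,b}\Sigma_{\mathbb{P}^1}^{\infty}X_+$ with $b \geq q$; on each such generator $[G_i, (E/f_q E)_{\mathbb{Q}}] = [G_i, E/f_q E] \otimes \mathbb{Q} = 0$, and the limit that computes $[G, (E/f_q E)_{\mathbb{Q}}]$ vanishes as well. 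Uniqueness of the slice decomposition then identifies $(f_q E)_{\mathbb{Q}} \to E_{\mathbb{Q}}$ with $f_q(E_{\mathbb{Q}}) \to E_{\mathbb{Q}}$, so $E_{\mathbb{Q}} \simeq 0$ forces $(f_q E)_{\mathbb{Q}} \simeq 0$ and hence $f_q E \in \Spt(k)_{\textup{tor}}$.

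For $\pi_q^{\mu}(E)$, I will use $s_q(E)[1/p] \simeq \textup{EM}(\pi_q^{\mu}(E)(q)[2q])$ together with the adjunction $(\mathbb{Z}[1/p]^{\textup{tr}}, \textup{EM})$. Note first that torsion is preserved under inverting $p$, since $(E[1/p])_{\mathbb{Q}} = E_{\mathbb{Q}}[1/p] \simeq E_{\mathbb{Q}} = 0$. For any compact generator $B = \mathbb{Z}[1/p]^{\textup{tr}}(\Sigma^{a,b}\Sigma_{\mathbb{P}^1}^{\infty}X_+)$ of $\textup{DM}(k;\mathbb{Z}[1/p])$, the adjunction gives
\[
[B, \pi_q^{\mu}(E)(q)[2q]]_{\textup{DM}} \otimes \mathbb{Q} = [\Sigma^{a,b}\Sigma_{\mathbb{P}^1}^{\infty}X_+, s_q(E)[1/p]]_{\Spt(k)} \otimes \mathbb{Q} = 0.
\]
Since Tate twist and suspension are autoequivalences of $\textup{DM}$, substituting $B(-q)[-2q]$ for $B$ shows the vanishing for all compact generators, so $\pi_q^{\mu}(E) \in \textup{DM}^{\textup{eff}}(k)_{\textup{tor}}$.

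The main obstacle is the compatibility of $f_q$ with rationalization, specifically verifying that the right-orthogonal of $\Spt(k)_{\geq(-\infty, q)}$ is preserved by rationalization; once this is established, everything else is formal manipulation of adjunctions and the closure properties of the torsion subcategory.
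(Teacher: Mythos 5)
Your proof is correct and matches the standard argument from the cited Lemma 6.3 of \cite{Levine} (which the paper does not reprove): identify $\Spt(k)_{\textup{tor}}$ with the kernel of rationalization via compact generation, verify that $f_q$ (hence $s_q$) commutes with $(-)\wedge\mathbf{1}_{\mathbb{Q}}$ via the universal property of the slice truncation, and transport the vanishing to $\textup{DM}^{\textup{eff}}$ through the $(\mathbb{Z}[1/p]^{\textup{tr}},\textup{EM})$ adjunction using that Tate twist and shift are autoequivalences. The one point worth making explicit in your right-orthogonality step is that you need the full mapping \emph{spectrum} $\Map(G_i,(E/f_qE)_{\mathbb{Q}})$ to be trivial, not just $\pi_0$; this is automatic because $\Spt(k)_{\geq(-\infty,q)}$ is closed under $\Sigma^{\pm 1}$ (the index $p$ runs over all of $\mathbb{Z}$), and likewise the passage from the compact generators to a general $G$ is cleanest phrased as closure under colimits and extensions of the full subcategory of objects for which the mapping spectrum into $(E/f_qE)_{\mathbb{Q}}$ vanishes.
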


In order to prove the next crucial proposition, we need the following lemma on motivic cohomology. Though this is well-known, we provide its short proof for the convenience of the reader.

\begin{lemma}\label{coho}
Suppose $\tau\in\{\et,\textup{Nis}\}$ is a choice of a Grothendieck topology, $R$ a commutative ring, and $k$ any field of finite cohomological dimension. Designate by $D_{\tau}:=D(\textup{Shv}^{tr}_{\tau}(k;R))$ the unbounded derived category associated to the abelian category of $\tau$-sheaves of $R$-modules with transfers $\textup{Shv}^{tr}_{\tau}(k;R)$. Then for every smooth $k$-scheme $X$, every complex $\underline{K}\in D_{\tau}$, and every $i\in\mathbb{Z}$, we have
\[\Ext^i_{\textup{Shv}^{tr}_{\tau}(k;R)}(R_{\textup{tr}}(X),\underline{K})=\mathbb{H}^i_{\tau}(X;\underline{K}).\]
\end{lemma}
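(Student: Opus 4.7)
The plan is to reduce both sides of the identity to cohomology of the sections of a single well-chosen resolution. Since $\textup{Shv}^{tr}_{\tau}(k;R)$ is a Grothendieck abelian category, Spaltenstein's theorem furnishes a $K$-injective resolution $\underline{K}\rightarrow I^{\bullet}$ of the (possibly unbounded) complex in $D_{\tau}$. The object $R_{\textup{tr}}(X)$ represents the functor ``evaluation at $X$'' on $\tau$-sheaves with transfers, so
\[\Hom_{\textup{Shv}^{tr}_{\tau}(k;R)}(R_{\textup{tr}}(X),F)=F(X)\]
for every $F\in\textup{Shv}^{tr}_{\tau}(k;R)$; applying this termwise to $I^{\bullet}$ yields
\[\Ext^i_{\textup{Shv}^{tr}_{\tau}(k;R)}(R_{\textup{tr}}(X),\underline{K})=H^i(I^{\bullet}(X)).\]

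Next, let $U:\textup{Shv}^{tr}_{\tau}(k;R)\rightarrow\textup{Shv}_{\tau}(k;R)$ denote the functor forgetting transfers. The hypercohomology $\mathbb{H}^i_{\tau}(X;\underline{K})$ is computed as $H^i(J^{\bullet}(X))$ for any $K$-injective resolution $J^{\bullet}$ of $U\underline{K}$ in the category $\textup{Shv}_{\tau}(k;R)$ of $\tau$-sheaves of $R$-modules. It therefore suffices to prove that $UI^{\bullet}$ is itself such a resolution — equivalently, that $U$ sends injective $\tau$-sheaves with transfers to $\Gamma(X,-)$-acyclic (in fact $K$-injective) $\tau$-sheaves of $R$-modules. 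Granting this, both displayed quantities equal $H^i(I^{\bullet}(X))$, and the lemma follows.

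The main obstacle is exactly the preservation statement just isolated. By standard adjoint nonsense, $U$ preserves injectives iff its left adjoint — the sheafification of the free-transfer presheaf construction $R_{\textup{tr}}(-)$ — is exact as a functor $\textup{Shv}_{\tau}(k;R)\rightarrow\textup{Shv}^{tr}_{\tau}(k;R)$. I would verify this by checking exactness on $\tau$-stalks: Nisnevich stalks are Henselian local rings and étale stalks are strictly Henselian, and on such semilocal bases the free-transfer construction applied to a short exact sequence of sheaves remains short exact, since finite correspondences from a Henselian local scheme are controlled by their finite surjective closed subschemes, on which the necessary flatness holds. This is essentially the content of Lecture~6 of the Mazza–Voevodsky–Weibel notes in the Nisnevich setting, with a direct étale analogue available in Voevodsky's paper on triangulated categories of motives over a field. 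Once this input is recorded, the previous two steps combine to give the desired identification.
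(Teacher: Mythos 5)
Your structure—Yoneda for the ${\rm Ext}^0$ computation and then control of the resolution—is reasonable, but there is a genuine gap at the heart of the argument. You reduce everything to the claim that the forgetful functor $U:\textup{Shv}^{tr}_{\tau}(k;R)\to\textup{Shv}_{\tau}(k;R)$ preserves injectives, which you propose to deduce from exactness of its left adjoint (the sheafified free-transfer functor). That exactness claim is not proved, and the references you point to do not prove it. Lecture $6$ of Mazza--Voevodsky--Weibel (Lemma $6.23$, and its Nisnevich analogue Lemma $13.4$) does \emph{not} show that $U$ preserves injectives; it proves the strictly weaker statement that an injective object $F$ of $\textup{Shv}^{tr}_{\tau}$ is $\Gamma(X,-)$-acyclic, and it does so by a different mechanism: embed $F$ into the first term $E^0$ of the canonical flasque resolution, use injectivity of $F$ \emph{in $\textup{Shv}^{tr}_{\tau}$} to split that embedding, and conclude that $H^i_{\tau}(X,F)$ is a direct summand of $H^i_{\tau}(X,E^0)=0$. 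This route deliberately sidesteps the question of whether $UF$ is injective, and it is the route the paper follows. Your stalkwise flatness heuristic (``finite correspondences from a Henselian local scheme are controlled by their finite surjective closed subschemes'') is a gesture, not an argument: the stalk of $L(F)$ is a coend $\int^{Y}\textup{Cor}(S,Y)_R\otimes_R F(Y)$, and there is no reason a priori for this coend to be left exact in $F$; coends are colimits and do not commute with kernels, and I know of no flatness statement of this kind in the literature.

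A secondary but related issue concerns the passage from a single sheaf to an unbounded complex. If $L$ were exact then indeed $U$ would carry $K$-injective complexes to $K$-injective complexes (via the adjunction on homotopy categories), and your argument would close. But absent that, a termwise application of ``$U$ sends injectives to $\Gamma(X,-)$-acyclics'' is \emph{not} by itself enough to compute hypercohomology of an unbounded complex: one must additionally invoke the finite cohomological dimension hypothesis to make a termwise-acyclic resolution compute $R\Gamma(X,-)$. The paper handles the unbounded case differently, via the hypercohomology spectral sequence $E_2^{p,q}=H^p_{\tau}(X,\mathcal{H}^q(\underline{K}))\Rightarrow\mathbb{H}^{p+q}_{\tau}(X,\underline{K})$ and its ${\rm Ext}$-analogue, with the finite cohomological dimension of $k$ (and hence of $X$) ensuring convergence on both sides; this is exactly where the standing hypothesis on $k$ is used. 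To repair your proof you would either need to actually establish exactness of $L$ (a nontrivial and, to my knowledge, unproved assertion) or, better, replace the ``$U$ preserves injectives'' step with the MVW flasque-splitting argument and then explicitly invoke the cohomological dimension bound to handle unboundedness.
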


\begin{proof}
First, we reproduce the proof of this when the complex $\underline{K}$ is a single $\tau$-sheaf of $R$-modules with transfers (concentrated in degree $0$), i.e. we show that if $F$ is a $\tau$-sheaf of $R$-modules with tranfer, then for each smooth $k$-scheme $X$ and each $i\in\mathbb{Z}$ we have
\[\Ext^i_{\textup{Shv}^{tr}_{\tau}(k;R)}(R_{\textup{tr}}(X),F)=\textup{H}^i_{\tau}(X;F).\]
This is proved in lemma 6.23 of \cite{MVW} for $\tau=\et$ and in lemma 13.4 of \cite{MVW} for $\tau=\Nis$. For $i=0$, $\Hom(R_{\textup{tr}}(X),F)=F(X)$ by the Yoneda lemma. For $i>0$, it suffices to prove that if $F$ is an injective $\tau$-sheaf of $R$-modules with transfers then $H^i(X;F)=0$. For the canonical flasque resolution $E^*(F)$ of $F$, we have that the inclusion $F\rightarrow E^0$ splits by the injectivity of $F$. Therefore, $F$ is a direct summand of $E^0$ in $\textup{Shv}^{tr}_{\tau}(k;R)$. Since $H^i_{\tau}(X;F)$ is a direct summand of $H^i_{\tau}(X;E^0)$, it must vanish for $i>0$.\\
\\
We prove the general theorem for the complex $\underline{K}$ using the hypercohomology spectral sequence. Indeed, we have the following morphism of spectral sequences
\[\xymatrix{E_2^{p,q}:=\Ext^p_{\textup{Shv}^{tr}_{\tau}(k;R)}(R_{\textup{tr}}(X),\cal{H}^q(\underline{K})) \ar@{=>}[r] \ar@{->}[d] & \Ext^{p+q}_{\textup{Shv}^{tr}_{\tau}(k;R)}(R_{\textup{tr}}(X),\underline{K}) \ar@{->}[d] \\ E_2^{p,q}:=H^p_{\tau}(X,\cal{H}^q(\underline{K})) \ar@{=>}[r] & \mathbb{H}^{p+q}_{\tau}(X,\underline{K}).}\]
By the result for sheaves, we have an isomorphism on the $E_2$ page, and so an isomorphism on the abutment. The conclusion follows. 
\end{proof}
\begin{remark}
\textup{This is not a sharp lemma. See proposition 2.2.3 of \cite{CDetale} in which the \'etale version is proved for all Noetherian base schemes $S$ \textit{without} cohomological finiteness assumptions. Also, if $S$ is a Noetherian scheme of finite Krull dimension $d$, then its Nisnevich cohomological dimension is bounded above by $d$. Therefore, the lemma is valid at least for Noetherian base schemes of finite Krull dimension.}
\end{remark}
\begin{proposition}\label{emprop}
Suppose $k$ is algebraically closed of exponential characteristic $p$, and suppose $M\in\textup{DM}^{\textup{eff}}(k;\mathbb{Z}[1/p])_{\textup{tor}}$. For every prime $\ell\neq p$, we have isomorphisms
\[\underline{\textup{\'Et}}_{\ell\ast}:\underline{\pi}_{n,0}\left(\textup{EM}(M)\right)(k)\otimes\mathbb{Z}_{\ell}\rightarrow\pi_n(\underline{\textup{\'Et}}_{\ell}(\textup{EM}(M))).\]
\end{proposition}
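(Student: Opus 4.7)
My strategy combines dévissage on $M$ with the classical \'etale Suslin--Voevodsky theorem: morally, both sides compute a cohomology of $\Spec k$ with coefficients in $M$, and for algebraically closed $k$ and $\ell \neq p$ the Nisnevich/motivic and \'etale computations are identified.

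The first reduction is to the case that $M$ is annihilated by $\ell^n$ for some $n$. Since $M$ is torsion in $\textup{DM}^{\text{eff}}(k;\mathbb{Z}[1/p])$ with $p$ invertible, it decomposes into $\ell'$-primary parts. Those with $\ell' \neq \ell$ vanish after $\otimes \mathbb{Z}_\ell$ on the left, and after the $(H\mathbb{Z}/\ell)^{\ast}$-localization inherent in the target of $\underline{\textup{\'Et}}_\ell$ on the right, so only the $\ell$-primary summand contributes. Expressing this summand as the filtered colimit of its $\ell^n$-torsion subobjects $M[\ell^n]$ and verifying that both sides commute with such a colimit (the LHS by flatness of $\mathbb{Z}_\ell$ and compactness properties, the RHS because $\underline{\textup{\'Et}}_\ell$ is built from colimit-preserving functors followed by the Bousfield localization), I may assume $M$ is $\ell^n$-torsion.

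Second, for such $M$, I compute the LHS via the adjunction $\mathbb{Z}[1/p]^{\text{tr}} \dashv \textup{EM}$ and Lemma~\ref{coho}:
\[
\underline{\pi}_{n,0}(\textup{EM}(M))(k) \otimes \mathbb{Z}_\ell \;\cong\; \mathbb{H}^{-n}_{\Nis}(\Spec k, M).
\]
To match this with $\pi_n(\underline{\textup{\'Et}}_\ell(\textup{EM}(M)))$, I perform a second dévissage: the full subcategory of those $M \in \textup{DM}^{\text{eff}}(k;\mathbb{Z}/\ell^n)$ for which the statement holds is closed under cofiber sequences and small colimits (since both sides are exact functors of $M$ by the colimit-preservation of $\textup{EM}$, $\underline{\textup{\'Et}}_\ell$, and Bousfield localization), so it suffices to check it on a generating set, namely the shifts and Tate twists of $\mathbb{Z}/\ell^n$. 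For such a generator, $\textup{EM}(M)$ is a shift-twist of the motivic mod-$\ell^n$ Eilenberg--MacLane spectrum, and its stable $\ell$-adic \'etale realization should be identified with the corresponding topological Eilenberg--MacLane pro-spectrum; this identification uses the formula $\textup{\'Et}_\ell S^{p,q} \simeq K(\mathbb{Z}_\ell, p+q)$ recorded earlier together with the classical \'etale Suslin--Voevodsky isomorphism between motivic and \'etale cohomology with $\mathbb{Z}/\ell^n$ coefficients over algebraically closed $k$ with $\ell \neq p$.

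The main obstacle is precisely this last identification: rigorously showing that $\underline{\textup{\'Et}}_\ell$ carries the motivic mod-$\ell^n$ Eilenberg--MacLane spectrum to its topological counterpart and that the induced map on homotopy groups realizes the Suslin--Voevodsky comparison. This requires careful bookkeeping of the (only oplax) monoidal structure on $\underline{\textup{\'Et}}_\ell$ and the interaction between the $\mathbb{P}^1$-stabilization on the motivic side and the $S^2_\ell$-stabilization on the topological side, both already used in building $\underline{\textup{\'Et}}_\ell$. Once the generator case is settled, the two dévissage arguments combine with the reduction to $\ell^n$-torsion to conclude the proposition.
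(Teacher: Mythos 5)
Your dévissage-plus-classical-SV strategy is conceptually on track, but two things go wrong. First, the generating set claim is false: in the Nisnevich category $\textup{DM}^{\text{eff}}(k;\mathbb{Z}/\ell^n)$, shifts and Tate twists of the constant object $\mathbb{Z}/\ell^n$ do not generate under colimits and triangles; one must include the representables $(\mathbb{Z}/\ell^n)^{\textup{tr}}(X)$ for smooth (projective) $X$, as the paper does with $\mathbb{Z}[1/p]^{\textup{tr}}(X)[s]/N$. Compact generation by shifts of the constant sheaf alone is only true after passing to the \'etale topology, where rigidity gives $\textup{DM}_{\et}^{\text{eff}}(k;\mathbb{Z}/\ell^n)\simeq\Mod_{H\mathbb{Z}/\ell^n}$.

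Second, and more importantly, the step you flag as ``the main obstacle'' is precisely where the paper introduces the idea your sketch lacks: factor $\underline{\textup{\'Et}}_\ell$ as $L^{H\mathbb{Z}/\ell}\Gamma^{\mathbb{S}}_!\circ \pi^*$, where $\pi^*:\Shv_{\infty}^{\Nis}(\Sm/k)\rightarrow\Shv_{\infty}^{\et}(\Sm/k)$ is the change-of-topology functor, and prove each factor is an equivalence on the relevant mapping spectra separately. The $\pi^*$ part, after passing through $\textup{DM}(k;\mathbb{Z}/N)\simeq\Mod_{M\mathbb{Z}/N}$ and $\textup{DM}_{\et}(k;\mathbb{Z}/N)\simeq\Mod_{M_{\et}\mathbb{Z}/N}$, is identified with the Suslin--Voevodsky map, proved to be an equivalence by the hypercohomology comparison of Lemma~\ref{coho}. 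The $L^{H\mathbb{Z}/\ell}\Gamma^{\mathbb{S}}_!$ part is then \emph{formal}: after $\pi^*$ one is in the \'etale world, where $\pi^*(M\mathbb{Z}/\ell^m)\simeq H\mathbb{Z}/\ell^m$ lies in the image of the constant sheaf functor $c:\Sp\rightarrow\Shv_{\infty}^{\et}(\Sm/k,\Sp)$, and $L^{H\mathbb{Z}/\ell}\Gamma^{\mathbb{S}}_!$ is the pro-left adjoint to $c$, so the comparison is automatic by adjunction. This factorization cleanly separates the arithmetic input (SV, in the Nisnevich-to-\'etale step) from the purely shape-theoretic input (pro-left adjoint to the constant sheaf, in the \'etale-to-pro-spectra step). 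Without it, there is no direct way to identify $\underline{\textup{\'Et}}_\ell(M\mathbb{Z}/\ell^n)$ with the topological $H\mathbb{Z}/\ell^n$ \emph{compatibly with} the Suslin--Voevodsky isomorphism, which is exactly the gap you acknowledge at the end of your sketch.
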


\begin{proof}
Let $\pi:\Shv_{\infty}^{Nis}(\Sm/k)\rightarrow \Shv_{\infty}^{\et}(\Sm/k)$ be the morphism of topoi. We can then write $\underline{\textup{\'Et}}_{\ell}L_{\mathbb{A}^1}=L^{H\mathbb{Z}/\ell}\Gamma^{\mathbb{S}}_!\pi^*$. We show that
\[\underline{\textup{\'Et}}_{\ell\ast}:\underline{\pi}_{n,0}\left(\textup{EM}(M)\right)(k)\otimes\mathbb{Z}_{\ell}\rightarrow\pi_n(\underline{\textup{\'Et}}_{\ell}(\textup{EM}(M)))\]
is an isomorphism by showing that
\[\pi^*:\underline{\pi}_{n,0}\left(\textup{EM}(M)\right)(k)\otimes\mathbb{Z}_{\ell}\rightarrow \underline{\pi}^{\et}_{n,0}\left(\pi^*\textup{EM}(M)\right)(k)\otimes\mathbb{Z}_{\ell}\]
and
\[L^{H\mathbb{Z}/\ell}\Gamma^{\mathbb{S}}_!:\underline{\pi}^{\et}_{n,0}\left(\pi^*\textup{EM}(M)\right)(k)\otimes\mathbb{Z}_{\ell}\rightarrow \pi_n(\underline{\textup{\'Et}}_{\ell}(\textup{EM}(M)))\]
are isomorphisms. Here, $\underline{\pi}_{a,b}^{\et}$ is defined as in $\underline{\pi}_{a,b}$ but with the Nisnevich topology replaced by the \'etale topology. We show that the two morphisms above are isomorphisms by showing that the morphisms
\[\Map_{\Spt(k)}(\mathbf{1}_k,\textup{EM}(M))\xrightarrow{\pi^*} \Map_{\Spt_{\et}(k)}(\pi^*\mathbf{1}_k,\pi^*\textup{EM}(M))\]
and
\[\Map_{\Spt_{\et}(k)}(\pi^*\mathbf{1}_k,\pi^*\textup{EM}(M))\xrightarrow{L^{H\mathbb{Z}/\ell}\Gamma^{\mathbb{S}}_!} \Map_{\Pro(\Sp)^{H\mathbb{Z}/\ell}}(L^{H\mathbb{Z}/\ell}\Gamma^{\mathbb{S}}_!\pi^*\mathbf{1}_k,L^{H\mathbb{Z}/\ell}\Gamma^{\mathbb{S}}_!\pi^*\textup{EM}(M))\]
are weak equivalences. By a density argument, the weak equivalence of $\pi^*$ need only be checked for $M=\mathbb{Z}[1/p]^{tr}(X)[s]/N$ for all $X$ smooth projective $k$-scheme, and all $N>1$ coprime to $p$. Remember that $\textup{EM}$ preserves small colimits. Indeed, $\mathbb{Z}[1/p]^{tr}(X)[n]/N$, where $n\in\mathbb{Z}, N>1$ coprime to $p$, and $X$ smooth projective $k$-scheme, compactly generate $\textup{DM}^{\textup{eff}}(k;\mathbb{Z}[1/p])_{\textup{tor}}$ (see proposition 5.5.3 of \cite{Kelly}).

Note that $\pi^*$ is equivalent to the morphism
\[\Map_{\Mod_{M\mathbb{Z}/N}}(M\mathbb{Z}/N,M\mathbb{Z}/N\wedge X_+)\xrightarrow{\pi^*} \Map_{\Mod_{M_{\et}\mathbb{Z}/N}}(\pi^*M\mathbb{Z}/N,\pi^*(M\mathbb{Z}/N\wedge X_+)).\]
The equivalences of tensor triangulated categories $\textup{DM}(k;\mathbb{Z}/N)\simeq\Mod_{M\mathbb{Z}/N}$ and $\textup{DM}_{\et}(k;\mathbb{Z}/N)\simeq\Mod_{M_{\et}\mathbb{Z}/N}$ induce the commutative square
\[\xymatrix{\Map_{\Mod_{M\mathbb{Z}/N}}(M\mathbb{Z}/N,M\mathbb{Z}/N\wedge X_+)\ar@{->}[r]^{\pi^*} \ar@{->}[d]_{\sim} & \Map_{\Mod_{M_{\et}\mathbb{Z}/N}}(\pi^*M\mathbb{Z}/N,\pi^*(M\mathbb{Z}/N\wedge X_+)) \ar@{->}[d]^{\sim} \\ \Map_{\textup{DM}(k;\mathbb{Z}/N)}((\mathbb{Z}/N)^{tr},(\mathbb{Z}/N)^{tr}(X))\ar@{->}[r]^{SV} & \Map_{\textup{DM}_{\et}(k;\mathbb{Z}/N)}((\mathbb{Z}/N)^{tr},(\mathbb{Z}/N)^{tr}(X)).}\]

For a Grothendieck topology $\tau$, recall that $D_{\tau}:=D(\textup{Shv}^{tr}_{\tau}(k;\mathbb{Z}/N))$. Consider the following commutative diagram
\begingroup
    \fontsize{9pt}{11pt}\selectfont
\[\xymatrix{\mathbb{H}_{Nis}^n(\Spec\ k,C_*(\mathbb{Z}/N)^{tr}(X)) \ar@{->}[d]^{=}_{a_{\textup{\'et}}} \ar@{->}[r]^{\sim} & \textup{Ext}^n_{D_{Nis}}((\mathbb{Z}/N)^{tr},C_*(\mathbb{Z}/N)^{tr}(X)) \ar@{->}[d]_{=} \ar@{->}[r]^{\sim}& \pi_{-n}\textup{Map}_{\textup{DM}(k;\mathbb{Z}/N)}((\mathbb{Z}/N)^{tr},(\mathbb{Z}/N)^{tr}(X)) \ar@{->}[d]_{\pi_{-n}SV}\\
\mathbb{H}_{\textup{\'et}}^n(\Spec\ k,C_*(\mathbb{Z}/N)^{tr}(X)) \ar@{->}[r]^{\sim} & \textup{Ext}^n_{D_{\textup{\'et}}}((\mathbb{Z}/N)^{tr},C_*(\mathbb{Z}/N)^{tr}(X)) \ar@{->}[r]^{\sim} & \pi_{-n}\textup{Map}_{\textup{DM}_{\textup{\'et}}(k;\mathbb{Z}/N)}((\mathbb{Z}/N)^{tr},(\mathbb{Z}/N)^{tr}(X))}\]
\endgroup
Since $C_*(\mathbb{Z}/N)^{tr}(X)$ is $\mathbb{A}^1$-local and $\textup{DM}^{\textup{eff}}(k;\mathbb{Z}/N)$ and $\textup{DM}_{\et}^{\textup{eff}}(k;\mathbb{Z}/N)$ embed fully faithfully into $\textup{DM}(k;\mathbb{Z}/N)$ and $\textup{DM}_{\et}(k;\mathbb{Z}/N)$, respectively, the upper and lower right morphisms are isomorphisms. Furthermore, both upper and lower left morphisms are also isomorphism by lemma~\ref{coho}. Since $H^i_{\tau}(\Spec k;-)$ vanishes for $i>0$, $H^0_{\tau}(\Spec k;-)$ is an exact functor. Therefore, $\mathbb{H}^*_{\tau}(\Spec k;\underline{K})=H^*(\underline{K}(\Spec k))$. As a result, the left vertical map is also an isomorphism. We conclude that $SV$ is a weak equivalence. By the commutative square above relating $\pi^*$ to $SV$, we deduce that $\pi^*$ is a weak equivalence.\\
\\
We use another density argument to prove that 
\[L^{H\mathbb{Z}/\ell}\Gamma^{\mathbb{S}}_!:\underline{\pi}^{\et}_{n,0}\left(\pi^*\textup{EM}(M)\right)(k)\otimes\mathbb{Z}_{\ell}\rightarrow \pi_n(\underline{\textup{\'Et}}_{\ell}(\textup{EM}(M)))\]
is an isomorphism. For $M=\mathbb{Z}[1/p]^{tr}(X)/N$ as before, $\pi^*M\in\textup{DM}_{\et}^{\textup{eff}}(k;\mathbb{Z}/N)\simeq\Mod_{H\mathbb{Z}/N}$. Since this category is compactly generated by shifts of the constant sheaf $\mathbb{Z}/N$, and since our functors are exact functors preserving colimits, it suffices to check that 
\[\Map_{\Spt_{\et}(k)}(\pi^*\mathbf{1}_k,\pi^*M\mathbb{Z}/\ell^m)\xrightarrow{L^{H\mathbb{Z}/\ell}\Gamma^{\mathbb{S}}_!} \Map_{\Pro(\Sp)^{H\mathbb{Z}/\ell}}(L^{H\mathbb{Z}/\ell}\Gamma^{\mathbb{S}}_!\pi^*\mathbf{1}_k,L^{H\mathbb{Z}/\ell}\Gamma^{\mathbb{S}}_!\pi^*(M\mathbb{Z}/\ell^m))\]
is a weak equivalence. However, note that $\pi^*(M\mathbb{Z}/\ell^m)\simeq H\mathbb{Z}/\ell^m$ is in the image of the constant functor $c:\Sp\rightarrow\Shv_{\infty}^{\et}(\Sm/k,\Sp)$. Since $L^{H\mathbb{Z}/\ell}\Gamma^{\mathbb{S}}_!$ is a pro-left adjoint to $c$, the above is formally a weak equivalence. 
\end{proof}

In order to prove the next proposition, we need the following lemma.

\begin{lemma}\label{cruc}
\[\underline{\textup{\'Et}}_{\ell}(M\mathbb{Z}[1/p])\simeq H\mathbb{Z}_{\ell}\]
is a weak equivalence in $\Pro(\cal{S})^{\mathbb{Z}/\ell}$.
\end{lemma}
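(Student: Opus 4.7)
The plan is to reduce the identification to a mod-$\ell^n$ statement, resolve it using the étale Suslin--Voevodsky comparison already exploited in the proof of proposition~\ref{emprop}, and then reassemble over $n$. First I would apply $\underline{\textup{\'Et}}_{\ell}$ to the cofiber sequence
\[M\mathbb{Z}[1/p] \xrightarrow{\ell^n} M\mathbb{Z}[1/p] \to M\mathbb{Z}/\ell^n\]
in $\Spt(k)$. Because the target $\Pro(\Sp)^{H\mathbb{Z}/\ell}$ is $(H\mathbb{Z}/\ell)^{*}$-local, a map there is an equivalence as soon as its mod-$\ell$ reduction is, so the problem reduces to exhibiting compatible equivalences $\underline{\textup{\'Et}}_{\ell}(M\mathbb{Z}/\ell^n) \simeq H\mathbb{Z}/\ell^n$ for each $n$ and then assembling the homotopy limit (naturally interpreted in $\Pro(\Sp)^{H\mathbb{Z}/\ell}$) to recover $H\mathbb{Z}_{\ell}$.

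The key step is then the mod-$\ell^n$ comparison. Unwinding $\underline{\textup{\'Et}}_{\ell} = L^{H\mathbb{Z}/\ell}\Gamma^{\mathbb{S}}_! \circ a_{\textup{\'et}}$ and using the symmetric monoidal equivalences $\Mod_{M\mathbb{Z}/\ell^n} \simeq \textup{DM}(k;\mathbb{Z}/\ell^n)$ and $\Mod_{M_{\textup{\'et}}\mathbb{Z}/\ell^n} \simeq \textup{DM}_{\textup{\'et}}(k;\mathbb{Z}/\ell^n)$, the étale sheafification $a_{\textup{\'et}}(M\mathbb{Z}/\ell^n)$ corresponds, via a density argument over the compact generators $\mathbb{Z}/\ell^n(X)[m]$ with $X\in\Sm/k$ smooth projective combined with lemma~\ref{coho} and the classical étale Suslin--Voevodsky isomorphism, to the constant étale sheaf of spectra associated to $H\mathbb{Z}/\ell^n$. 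Since $\Gamma^{\mathbb{S}}_!$ is pro-left adjoint to this constant sheaf functor and the étale topological type of $\Spec k$ is contractible for $k$ algebraically closed, $\Gamma^{\mathbb{S}}_!$ carries this constant sheaf to the pro-spectrum $H\mathbb{Z}/\ell^n$; applying $L^{H\mathbb{Z}/\ell}$ is vacuous on $H\mathbb{Z}/\ell^n$, so we obtain the desired equivalence $\underline{\textup{\'Et}}_{\ell}(M\mathbb{Z}/\ell^n) \simeq H\mathbb{Z}/\ell^n$ in $\Pro(\Sp)^{H\mathbb{Z}/\ell}$.

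The main obstacle will be the second step: producing and rigorously verifying the canonical equivalence $a_{\textup{\'et}}(M\mathbb{Z}/\ell^n) \simeq c(H\mathbb{Z}/\ell^n)$ requires checking agreement on a set of compact generators and carefully repeating the Suslin--Voevodsky argument already used in the bottom row of the commutative diagram in the proof of proposition~\ref{emprop}. Once that identification is in place, the reduction to mod-$\ell^n$ and the limit reconstitution are formal consequences of the $(H\mathbb{Z}/\ell)^{*}$-local structure of the target category and the pro-left adjoint description of $\Gamma^{\mathbb{S}}_!$ recorded in section~\ref{etalehomotopy}.
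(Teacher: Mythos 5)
Your route is genuinely different from the paper's, and the comparison is instructive. The paper's proof is direct: it expresses $M\mathbb{Z}[1/p]$ as the spectrum-level colimit $\colim_n\Sigma_{\mathbb{P}^1}^{-n}\Sigma^{\infty}_{\mathbb{P}^1}K(\mathbb{Z}[1/p](n),2n)$, uses that $\underline{\textup{\'Et}}_{\ell}$ preserves colimits, and then plugs in Hoyois' computation (corollary 8.5 of \cite{Hoyois}) that $\textup{\'Et}_{\ell}K(\mathbb{Z}(n),2n)\simeq K(\mathbb{Z}_{\ell},2n)$ to identify the colimit with $L^{H\mathbb{Z}/\ell}H\mathbb{Z}\simeq H\mathbb{Z}_{\ell}$. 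Your proposal instead reduces modulo $\ell^n$ and imports the Suslin--Voevodsky rigidity of étale motives with torsion coefficients, which the paper itself only deploys later in proposition~\ref{emprop}. The paper's argument is more economical because it never leaves the unstable realization: the mod-$\ell$ cohomology of the motivic Eilenberg--MacLane spaces is already under control, and no reassembly is needed. Your argument would realize lemma~\ref{cruc} as a formal consequence of the rigidity machinery, which has the conceptual advantage of isolating exactly where the algebraically-closed hypothesis enters, at the price of having to verify the reassembly.

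There is, however, a gap in the reassembly step as written. You observe that the cofiber sequence gives, after applying the exact functor $\underline{\textup{\'Et}}_{\ell}$, the identification $\underline{\textup{\'Et}}_{\ell}(M\mathbb{Z}/\ell^n)\simeq \underline{\textup{\'Et}}_{\ell}(M\mathbb{Z}[1/p])/\ell^n$. But you then say you will ``assemble the homotopy limit'' to recover $H\mathbb{Z}_{\ell}$. Since $\underline{\textup{\'Et}}_{\ell}$ is built out of left adjoints (étale sheafification, a pro-left adjoint, and a localization), it preserves colimits but there is no reason for it to commute with the limit $\lim_n M\mathbb{Z}/\ell^n$ -- and indeed $M\mathbb{Z}[1/p]$ is not that limit in $\Spt(k)$, so there is nothing for the functor to commute with. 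The correct way to close the loop is to argue that every object $X$ of $\Pro(\Sp)^{H\mathbb{Z}/\ell}$ is $\ell$-complete, so that $X\simeq\lim_n X/\ell^n$; taking $X=\underline{\textup{\'Et}}_{\ell}(M\mathbb{Z}[1/p])$ and using exactness to identify $X/\ell^n\simeq \underline{\textup{\'Et}}_{\ell}(M\mathbb{Z}/\ell^n)\simeq H\mathbb{Z}/\ell^n$ compatibly in $n$ then gives $X\simeq\lim_n H\mathbb{Z}/\ell^n\simeq H\mathbb{Z}_{\ell}$. You should also not conflate this with the one-sentence claim that an equivalence is detected by its mod-$\ell$ reduction; that statement presupposes a map between the two sides which you never construct, whereas the $\ell$-completeness argument above does not require one. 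Finally, the pro-left adjoint step needs care: the counit $\Gamma^{\mathbb{S}}_!(c(E))\to E$ being an equivalence rests on the contractibility of the étale topological type of $\Spec k$ for $k$ separably closed, exactly as in the paper's proof of proposition~\ref{emprop}, and you are right to flag that this must be spelled out.
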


\begin{proof}
Note that
\[M\mathbb{Z}[1/p]\simeq\colim_{n\rightarrow\infty}\Sigma_{\mathbb{P}^1}^{-n}\Sigma^{\infty}_{\mathbb{P}^1}K(\mathbb{Z}[1/p](n),2n).\]
Since $\underline{\textup{\'Et}}_{\ell}$ preserves colimits, we obtain
\begin{eqnarray*}\underline{\textup{\'Et}}_{\ell}(M\mathbb{Z}[1/p]) &\simeq& \colim_{n\rightarrow{\infty}}\underline{\textup{\'Et}}_{\ell}(\Sigma_{\mathbb{P}^1}^{-n}\Sigma^{\infty}_{\mathbb{P}^1}K(\mathbb{Z}[1/p](n),2n))\\ &\simeq& \colim_{n\rightarrow{\infty}}\Sigma_{S_{\ell}^2}^{-n}\Sigma^{\infty}_{S^2_{\ell}}\textup{\'Et}_{\ell}(K(\mathbb{Z}[1/p](n),2n))\\ &\simeq^*& \colim_{n\rightarrow{\infty}}\Sigma_{S_{\ell}^2}^{-n}\Sigma^{\infty}_{S^2_{\ell}}K(\mathbb{Z}_{\ell},2n)\\ &\simeq& L^{H\mathbb{Z}/\ell}\colim_{n\rightarrow{\infty}}\Sigma_{S^2}^{-n}\Sigma^{\infty}_{S^2}K(\mathbb{Z},2n) \\ &\simeq& L^{H\mathbb{Z}/\ell}H\mathbb{Z}\simeq H\mathbb{Z}_{\ell},
\end{eqnarray*}
where the equivalence $\simeq^*$ follows from corollary 8.5 of \cite{Hoyois}.
\end{proof}

\begin{proposition}\label{sqprop}
For every $q$ and $n$, the map 
\[\underline{\textup{\'Et}}_{\ell\ast}:\underline{\pi}_{n,0}\left(s_q(\mathbf{1}_k)\right)(k)\otimes\mathbb{Z}_{\ell}\rightarrow\pi_n\left(\underline{\textup{\'Et}}_{\ell}(s_q(\mathbf{1}_k))\right)\]
is an isomorphism.
\end{proposition}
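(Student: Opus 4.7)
The plan is to split into three cases according to the sign of $q$ and reduce each to a result already established in the paper. Throughout, recall that since $\ell\neq p$, tensoring with $\mathbb{Z}_{\ell}$ factors through tensoring with $\mathbb{Z}[1/p]$, so I may freely replace $s_q(\mathbf{1}_k)$ by $s_q(\mathbf{1}_k)[1/p]$ on the motivic side without loss of information.

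For $q<0$, since $\mathbf{1}_k$ is $0$-effective the truncation $f_q(\mathbf{1}_k)$ coincides with $\mathbf{1}_k$ for every $q\le 0$, so the defining cofiber sequence $f_{q+1}(\mathbf{1}_k)\to f_q(\mathbf{1}_k)\to s_q(\mathbf{1}_k)$ forces $s_q(\mathbf{1}_k)=0$. Both sides of the claimed comparison vanish, and there is nothing to prove.

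For $q=0$, I would invoke Voevodsky's identification $s_0(\mathbf{1}_k)\simeq M\mathbb{Z}$, valid over any perfect field, which gives $s_0(\mathbf{1}_k)[1/p]\simeq M\mathbb{Z}[1/p]$. On the motivic side, $\underline{\pi}_{n,0}(M\mathbb{Z}[1/p])(k)\otimes\mathbb{Z}_{\ell}$ is the motivic cohomology group $H^{-n,0}(\Spec k;\mathbb{Z}[1/p])\otimes\mathbb{Z}_{\ell}$, which is $\mathbb{Z}_{\ell}$ for $n=0$ and vanishes otherwise. On the pro-spectral side, Lemma~\ref{cruc} supplies $\underline{\textup{\'Et}}_{\ell}(M\mathbb{Z}[1/p])\simeq H\mathbb{Z}_{\ell}$, whose homotopy groups are $\mathbb{Z}_{\ell}$ concentrated in degree zero. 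Naturality of $\underline{\textup{\'Et}}_{\ell*}$, applied to the unit of each ring spectrum, sends the generator $1\in\mathbb{Z}[1/p]\otimes\mathbb{Z}_\ell$ to the generator $1\in\mathbb{Z}_{\ell}$, so the comparison map is the identity on $\mathbb{Z}_{\ell}$.

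For $q\ge 1$, Lemma~\ref{2} gives that the motive $\pi_q^{\mu}(\mathbf{1}_k)$ lies in $\textup{DM}^{\textup{eff}}(k;\mathbb{Z}[1/p])_{\textup{tor}}$; Tate twists by non-negative weight and shifts both preserve effectivity and torsion, so the same holds for $\pi_q^{\mu}(\mathbf{1}_k)(q)[2q]$. Using the identification $s_q(\mathbf{1}_k)[1/p]\simeq\textup{EM}(\pi_q^{\mu}(\mathbf{1}_k)(q)[2q])$ recorded before Lemma~\ref{2}, I would apply Proposition~\ref{emprop} directly to the motive $M:=\pi_q^{\mu}(\mathbf{1}_k)(q)[2q]$ to conclude that the comparison map is an isomorphism in every stable degree $n$. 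The main obstacle is the $q=0$ case: the slice $s_0(\mathbf{1}_k)\simeq M\mathbb{Z}$ is \emph{not} torsion, so Proposition~\ref{emprop} cannot be invoked and an independent computation is required. Fortunately Lemma~\ref{cruc} does exactly this work; without it, one would need to compute the stable $\ell$-adic \'etale realization of $M\mathbb{Z}[1/p]$ directly, which is essentially a version of Suslin-Voevodsky at the level of the motivic Eilenberg-Maclane spectrum.
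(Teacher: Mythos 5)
Your proposal is correct and follows essentially the same route as the paper's own argument: vanishing of $s_q(\mathbf{1}_k)$ for $q<0$ from effectivity, identification $s_0(\mathbf{1}_k)\simeq M\mathbb{Z}$ together with Lemma~\ref{cruc} and a check on units for $q=0$, and Lemma~\ref{2} followed by Proposition~\ref{emprop} applied to $\pi_q^{\mu}(\mathbf{1}_k)(q)[2q]$ for $q\geq 1$. Your extra remarks (that tensoring with $\mathbb{Z}_{\ell}$ factors through $\mathbb{Z}[1/p]$, and that non-negative Tate twists and shifts preserve effectivity and torsion) just make explicit steps the paper leaves implicit.
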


\begin{proof}
Note that $\mathbf{1}_k\in\Spt^{\textup{eff}}(k)$, hence for $q<0$, $s_q(\mathbf{1})=0$. For $q=0$, $s_0(\mathbf{1})\simeq M\mathbb{Z}$ by \cite{HConiveau}. As a result of this and the previous lemma, $\underline{\textup{\'Et}}_{\ell}(s_0(\mathbf{1}))=\underline{\textup{\'Et}}_{\ell}(M\mathbb{Z}[1/p])\simeq H\mathbb{Z}_{\ell}$ in $\Pro(\Sp)^{H\mathbb{Z}/\ell}$. Therefore,
\[\underline{\pi}_{n,0}(s_0(\mathbf{1}))(k)=\pi_{n,0}(M\mathbb{Z})=H_n^{\textup{Sus}}(k;\mathbb{Z})=\begin{cases} 0\ \text{for}\ n\neq 0\\ \mathbb{Z}\ \text{for}\ n=0.\end{cases}\]
On the other hand, 
\[\pi_n(H\mathbb{Z}_{\ell})=\begin{cases} 0\ \text{for}\ n\neq 0\\ \mathbb{Z}_{\ell}\ \text{for}\ n=0.\end{cases}\]
The unit in $\underline{\pi}_{0,0}(M\mathbb{Z}[1/p])(k)\otimes\mathbb{Z}_{\ell}$ is induced by $\mathbf{1}\rightarrow M\mathbb{Z}[1/p]$ which goes to the unit $L^{H\mathbb{Z}/\ell}\mathbb{S}\rightarrow H\mathbb{Z}_{\ell}$ in $\pi_0(H\mathbb{Z}_{\ell})$ under the stable $\ell$-adic \'etale realization. Therefore, we have the isomorphism when $q\leq 0$. For $q>0$, $\pi_q^{\mu}(\mathbf{1})$ is in $\textup{DM}^{\textup{eff}}(k)_{\textup{tor}}$ by lemma~\ref{2}. Since $s_q(\mathbf{1})=\textup{EM}(\pi_q^{\mu}(q)[2q])$, we have the isomorphism by proposition~\ref{emprop}. The conclusion follows.
\end{proof}
Using this proposition, a comparison of the two spectral sequences of the previous section gives us the main theorem.
\begin{theorem}
Let $k$ be an algebraically closed field of exponential characteristic $p$. Then for every prime $\ell\neq p$, the stable $\ell$-adic \'etale realization induces an isomorphism
\[\underline{\textup{\'Et}}_{\ell\ast}:\underline{\pi}_{*,0}(\mathbf{1}_k)(k)\otimes_{\mathbb{Z}}\mathbb{Z}_{\ell}\rightarrow \pi_*\left(\mathbb{S}\right)\otimes_{\mathbb{Z}}\mathbb{Z}_{\ell}\]
\end{theorem}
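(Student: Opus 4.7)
The plan is to run a comparison of two strongly convergent spectral sequences applied to the motivic sphere $\mathbf{1}_k$, using Proposition~\ref{sqprop} to control the $E_1$-page and then transferring the isomorphism to the abutments via a standard comparison theorem. Since $\mathbf{1}_k = \Sigma^{\infty}_{\mathbb{P}^1}(\Spec k)_+ \in \Spt^{\textup{eff}}_{\textup{fin}}(k)$, both spectral sequences constructed in Section~\ref{specseq} are strongly convergent when applied to $\mathbf{1}_k$. This is the key input that makes the abutment comparison legitimate.

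First, I would use the fact that the stable $\ell$-adic \'etale realization $\underline{\textup{\'Et}}_{\ell}$ is an exact functor (cf.\ its construction in Section~\ref{etalehomotopy}) and is natural in $E$. Applying it to the slice tower $\cdots \to f_{q+1}(\mathbf{1}_k) \to f_q(\mathbf{1}_k) \to \cdots \to \mathbf{1}_k$ produces a tower in $\Pro(\Sp)^{H\mathbb{Z}/\ell}$, and hence a morphism of exact couples. Tensoring the motivic exact couple with $\mathbb{Z}_{\ell}$ and comparing with the \'etale exact couple (which is already in an $\ell$-adic setting) yields a morphism of spectral sequences from the $E_1$-page onward:
\[
E_1^{p,q}(\textup{mot}) = \underline{\pi}_{p+q,0}(s_q(\mathbf{1}_k))(k)\otimes\mathbb{Z}_{\ell} \;\longrightarrow\; \pi_{p+q}(\underline{\textup{\'Et}}_{\ell}(s_q(\mathbf{1}_k))) = E_1^{p,q}(\textup{\'et}),
\]
converging to the desired map $\underline{\pi}_{p+q,0}(\mathbf{1}_k)(k)\otimes\mathbb{Z}_{\ell} \to \pi_{p+q}(\underline{\textup{\'Et}}_{\ell}(\mathbf{1}_k))$.

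Second, Proposition~\ref{sqprop} precisely states that the $E_1$-level map above is an isomorphism for every $p,q$. By the strong convergence of both spectral sequences (established in Section~\ref{specseq}, using in particular Proposition~\ref{etaleconnectivity} on the \'etale side and Theorem~6.3 of \cite{slice} on the motivic side), Boardman's comparison theorem for conditionally/strongly convergent spectral sequences implies the induced map on abutments is an isomorphism. To finish, I would identify the \'etale abutment: using the commutative diagram at the end of Section~\ref{etalehomotopy},
\[
\underline{\textup{\'Et}}_{\ell}(\mathbf{1}_k) \;=\; \underline{\textup{\'Et}}_{\ell}(\Sigma^{\infty}_{\mathbb{P}^1}(\Spec k)_+) \;\simeq\; \Sigma^{\infty}_{S^2_{\ell}}\textup{\'Et}_{\ell}(\Spec k)_+ \;\simeq\; L^{H\mathbb{Z}/\ell}\mathbb{S},
\]
whose homotopy groups are $\pi_*(\mathbb{S})^{\wedge}_{\ell}$. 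Since $\pi_0(\mathbb{S}) = \mathbb{Z}$ and $\pi_n(\mathbb{S})$ is finite for $n > 0$, we have $\pi_*(\mathbb{S})^{\wedge}_{\ell} \cong \pi_*(\mathbb{S})\otimes\mathbb{Z}_{\ell}$, matching the statement of the theorem.

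The hard part will not be the comparison itself, which is a formality once Proposition~\ref{sqprop} and strong convergence are in hand; the real technical content lives upstream in Proposition~\ref{sqprop} (which combines the computation of the zero-slice $s_0(\mathbf{1}) \simeq M\mathbb{Z}$, the torsion statements of Lemmas~\ref{2} and \ref{torlem}, and Proposition~\ref{emprop}'s \'etale Suslin--Voevodsky-type identification of the realization of $EM$-modules). The only genuine point of care at this stage is verifying that the two spectral sequences assemble into a morphism of \emph{strongly} (not merely conditionally) convergent spectral sequences compatibly — this requires keeping track of connectivity, filtration-exhaustion, and derived-limit vanishing uniformly on both sides, which Section~\ref{specseq} has set up precisely for this application.
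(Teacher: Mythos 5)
Your proposal is correct and captures the essential mechanism (strong convergence on both sides, Proposition~\ref{sqprop} to identify the $E_1$-pages, Boardman to pass to abutments), but your route is genuinely streamlined relative to the paper's. The paper first settles the $n=0$ case by hand, invoking Morel's theorem to identify $\underline{\pi}_{0,0}(\mathbf{1}[1/p])(k)\cong\mathbb{Z}[1/p]$ via the dimension map and checking directly that the unit $\mathbf{1}\to M\mathbb{Z}$ realizes to the unit $L^{H\mathbb{Z}/\ell}\mathbb{S}\to H\mathbb{Z}_{\ell}$. It then uses the cofiber sequence $f_1\mathbf{1}\to\mathbf{1}\to s_0\mathbf{1}\simeq M\mathbb{Z}$, the vanishing of $\underline{\pi}_{a,0}(M\mathbb{Z}[1/p])(k)$ for $a\neq 0$, and the connectivity bound of Proposition~\ref{etaleconnectivity} to reduce the $n\neq 0$ cases to the spectral sequence for $f_1\mathbf{1}$ (whose $E_1$-terms only involve $s_q\mathbf{1}$ for $q\geq 1$, all torsion by Lemma~\ref{2}, so Proposition~\ref{emprop} applies uniformly). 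You instead run the two spectral sequences directly against $\mathbf{1}_k$, relying on Proposition~\ref{sqprop} (which already covers $q=0$ via $s_0\mathbf{1}\simeq M\mathbb{Z}$ and Lemma~\ref{cruc}), and thereby avoid the $n=0$/$n\neq 0$ case split, the cofiber-sequence reduction, and any explicit appeal to Morel's theorem. This is a legitimate simplification that buys uniformity: the only thing your route asks of $q=0$ is exactly what Proposition~\ref{sqprop} already certifies, whereas the paper's route inherits the structure of Levine's original characteristic-zero argument. The one point to be careful about in your write-up is the last identification $\pi_*\bigl(L^{H\mathbb{Z}/\ell}\mathbb{S}\bigr)\cong\pi_*(\mathbb{S})\otimes\mathbb{Z}_{\ell}$ in the pro-category: this does hold because $\pi_*(\mathbb{S})$ is degreewise finitely generated (so $\ell$-completion agrees with $-\otimes\mathbb{Z}_{\ell}$ and the relevant $\lim^1$-terms vanish), but it is worth spelling out, since the abutment of the \'etale spectral sequence is a priori a homotopy group of a pro-spectrum rather than of a spectrum; the paper itself elides this step.
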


\begin{proof}
Consider the case $n=0$. By Morel's theorem, $\underline{\pi}_{0,0}\left(\mathbf{1}[1/p]\right)(k)=\textup{GW}(k)[1/p]\simeq\mathbb{Z}[1/p]$ via the dimension function lemma 3.10 of \cite{Morel}. Note that here we use the fact that $k$ is algebraically closed. As a result, $\mathbf{1}[1/p]\rightarrow s_0(\mathbf{1})[1/p]\simeq M\mathbb{Z}[1/p]$ induces the natural map
\[\underline{\pi}_{0,0}\left(\mathbf{1}[1/p]\right)(k)\rightarrow\underline{\pi}_{0,0}(M\mathbb{Z}[1/p])(k)\simeq\mathbb{Z}[1/p]\]
that induces an isomorphism
\[\underline{\pi}_{0,0}(\mathbf{1})(k)\otimes\mathbb{Z}_{\ell}\xrightarrow{\sim}\underline{\pi}_{0,0}(M\mathbb{Z}[1/p])(k)\otimes\mathbb{Z}_{\ell}.\]
On the other hand the first slice of $L^{H\mathbb{Z}/\ell}\mathbb{S}$ in $\Pro(\Sp)^{H\mathbb{Z}/\ell}$ is $L^{H\mathbb{Z}/\ell}\mathbb{S}\rightarrow H\mathbb{Z}_{\ell}$. However, the image of $\mathbf{1}\rightarrow M\mathbb{Z}$ under $\underline{\textup{\'Et}}_{\ell}$ is the generator $L^{H\mathbb{Z}/\ell}\mathbb{S}\rightarrow H\mathbb{Z}_{\ell}$. We therefore have the commutative square
\[\xymatrix{\underline{\pi}_{0,0}(\mathbf{1})(k)\otimes\mathbb{Z}_{\ell} \ar@{->}[r]^{\sim} \ar@{->}[d]_{\underline{\textup{\'Et}}_{\ell\ast}} & \underline{\pi}_{0,0}(s_0(\mathbf{1}))(k)\otimes\mathbb{Z}_{\ell} \ar@{->}[d]^{\sim}_{\underline{\textup{\'Et}}_{\ell\ast}}\\ \pi_0(\mathbb{S})\otimes\mathbb{Z}_{\ell} \ar@{->}[r]^{\sim} & \pi_0(H\mathbb{Z}_{\ell}).}\]
From this it follows that
\[\underline{\textup{\'Et}}_{\ell\ast}:\underline{\pi}_{0,0}(\mathbf{1})(k)\otimes_{\mathbb{Z}}\mathbb{Z}_{\ell}\rightarrow \pi_0\left(\mathbb{S}\right)\otimes_{\mathbb{Z}}\mathbb{Z}_{\ell}\]
is an isomorphism. From the slice tower for $\mathbf{1}$ we get the distinguished triangle
\[f_1\mathbf{1}\rightarrow\mathbf{1}\rightarrow s_0\mathbf{1}\rightarrow f_1\mathbf{1}[1].\]
Using $s_0(\mathbf{1})\simeq M\mathbb{Z}$, we obtain the long exact sequence of stable motivic homotopy groups
\[\hdots\rightarrow\underline{\pi}_{a+1,0}(M\mathbb{Z}[1/p])(k)\rightarrow\underline{\pi}_{a,0}(f_1(\mathbf{1})[1/p])(k)\rightarrow\underline{\pi}_{a,0}(\mathbf{1}[1/p])(k)\rightarrow\underline{\pi}_{a,0}(M\mathbb{Z}[1/p])(k)\rightarrow\hdots\]
However, $\underline{\pi}_{a,0}(M\mathbb{Z}[1/p])(k)=H^{-a,0}(k;\mathbb{Z}[1/p])$ is $0$ if $a\neq 0$, and is $\mathbb{Z}[1/p]$ if $a=0$. By lemmas 4.4 and 4.5 of \cite{Levine}, and proposition 4.7 of \cite{Levine}, $f_1(\mathbf{1})$ is topologically $(-1)$-connected, and so $f_1(\mathbf{1})[1/p]$ is also topologically $(-1)$-connected. As a result, the maps\[\underline{\pi}_{a,0}\left(f_1(\mathbf{1})[1/p]\right)\rightarrow\underline{\pi}_{a,0}\left(\mathbf{1}[1/p]\right)\]
are isomorphisms for $a\neq 0$. Also, $\underline{\pi}_{0,0}\left(f_1(\mathbf{1})[1/p]\right)=0$. $\underline{\textup{\'Et}}_{\ell}$ produces the distinguished triangle
\[\underline{\textup{\'Et}}_{\ell}\left(f_1\mathbf{1}\right)\rightarrow\underline{\textup{\'Et}}_{\ell}\left(\mathbf{1}\right)\rightarrow \underline{\textup{\'Et}}_{\ell}\left(s_0\mathbf{1}\right)\rightarrow \underline{\textup{\'Et}}_{\ell}\left(f_1\mathbf{1}\right)[1]\]
from which we get the long exact sequence of homotopy groups
\[\hdots\rightarrow\pi_{a+1}(\underline{\textup{\'Et}}_{\ell}\left(M\mathbb{Z}\right))\rightarrow\pi_a(\underline{\textup{\'Et}}_{\ell}\left(f_1(\mathbf{1}))\right))\rightarrow\pi_a(\underline{\textup{\'Et}}_{\ell}\left(\mathbf{1}\right))\rightarrow\pi_a(\underline{\textup{\'Et}}_{\ell}\left(M\mathbb{Z}\right))\rightarrow\hdots\]
We know by proposition~\ref{etaleconnectivity} that $\underline{\textup{\'Et}}_{\ell}\left(f_1(\mathbf{1})\right)$ is $0$-connected. In combination with the long exact sequence of stable motivic homotopy groups above, we obtain commutative squares
\[\xymatrix{\underline{\pi}_{a,0}\left(f_1(\mathbf{1})\right)(k)\otimes\mathbb{Z}_{\ell} \ar@{->}[r]^{\sim} \ar@{->}[d]_{\underline{\textup{\'Et}}_{\ell\ast}} & \underline{\pi}_{a,0}\left(\mathbf{1}\right)(k)\otimes\mathbb{Z}_{\ell} \ar@{->}[d]_{\underline{\textup{\'Et}}_{\ell\ast}}\\ \pi_a(\underline{\textup{\'Et}}_{\ell}\left(f_1(\mathbf{1})\right)\otimes\mathbb{Z}_{\ell} \ar@{->}[r]^{\sim} & \pi_a(\underline{\textup{\'Et}}_{\ell}(\mathbf{1})).}\]
As a result, it suffices to prove that
\[\underline{\textup{\'Et}}_{\ell\ast}:\underline{\pi}_{a,0}\left(f_1(\mathbf{1})\right)\otimes\mathbb{Z}_{\ell}\rightarrow\pi_a\left(\underline{\textup{\'Et}}_{\ell}\left(f_1(\mathbf{1})\right)\right)\]
is an isomorphism. We prove this via a comparison of spectral sequences. Recall that we have a morphism
\[\xymatrix{
_{I}E^{p,q}_1:=\underline{\pi}_{p+q,0}(s_q(\mathbf{1}))(k)\otimes\mathbb{Z}_{\ell} \ar@{=>}[r] \ar@{->}[d]_{\underline{\textup{\'Et}}_{\ell\ast}} & \underline{\pi}_{p+q,0}(f_1(\mathbf{1}))(k)\otimes\mathbb{Z}_{\ell} \ar@{->}[d]^{\underline{\textup{\'Et}}_{\ell\ast}}\\ _{II}E^{p,q}_1:=\pi_{p+q}\left(\underline{\textup{\'Et}}_{\ell}\left(s_q(\mathbf{1})\right)\right) \ar@{=>}[r] & \pi_{p+q}\left(\underline{\textup{\'Et}}_{\ell}\left(f_1(\mathbf{1})\right)\right)
}\]
of strongly convergent spectral sequences. By proposition~\ref{sqprop}, $\underline{\textup{\'Et}}_{\ell\ast}$ is an isomorphism on the $E_1$-page, and so it is an isomorphism on the abutment. The conclusion follows. 
\end{proof}

\section{Generalized \'Etale Suslin-Voevodsky Theorem}\label{susvoe}
The Suslin-Voevodsky theorem gives an isomorphism between Suslin (co)homology and singular or \'etale (co)homology, all with $\mathbb{Z}/N$ coefficient with $(N,p)=1$ ($p$ the exponential characteristic of $k$), of a finite type separated $k$-scheme, $k$ algebraically closed. In theorem 7.1 of \cite{Levine}, Levine proved a homotopy theoretic generalization of the Suslin-Voevodsky theorem relating motivic cohomology to singular cohomology. Here, we prove a homotopy theoretic generalization of the \'etale variant of this theorem.

\begin{theorem}\label{susvoe}
Suppose $k$ is an algebraically closed field of exponential characteristic $p$, $E\in\Spt^{\textup{eff}}(k)_{\textup{tor}}$ is an effective torsion $\mathbb{P}^1$-spectrum, and $\ell\neq p$ is a prime. Then
\[\underline{\textup{\'Et}}_{\ell\ast}:\underline{\pi}_{n,0}(E)(k)\otimes\mathbb{Z}_{\ell}\rightarrow\pi_n\left(\underline{\textup{\'Et}}_{\ell}(E)\right)\]
is an isomorphism.
\end{theorem}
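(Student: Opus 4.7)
The plan is to adapt the proof of Theorem~\ref{main}, replacing the motivic sphere $\mathbf{1}_k$ with the given effective torsion $\mathbb{P}^1$-spectrum $E$, and comparing the slice spectral sequence with the spectral sequence obtained by applying the stable $\ell$-adic \'etale realization to the slice tower.

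First, I would reduce to the case where $E$ is a finite effective torsion spectrum. Every object of $\Spt^{\textup{eff}}(k)_{\textup{tor}}$ is a filtered colimit of compact (finite) effective torsion sub-objects. Because $\underline{\textup{\'Et}}_{\ell}$ preserves small colimits (it was built from left adjoints), because $\underline{\pi}_{n,0}(-)(k)\otimes\mathbb{Z}_{\ell}$ commutes with filtered colimits of motivic spectra, and because under the torsion hypothesis the filtered colimit on the pro-spectrum side can be controlled compatibly with $\pi_n$ after $(H\mathbb{Z}/\ell)^*$-localization, we may assume $E\in\Spt^{\textup{eff}}_{\textup{fin}}(k)_{\textup{tor}}$.

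For such $E$, Section~\ref{specseq} supplies the two strongly convergent spectral sequences
\[{}_{I}E_1^{p,q}=\underline{\pi}_{p+q,0}(s_qE)(k)\otimes\mathbb{Z}_{\ell}\implies \underline{\pi}_{p+q,0}(E)(k)\otimes\mathbb{Z}_{\ell},\]
\[{}_{II}E_1^{p,q}=\pi_{p+q}\left(\underline{\textup{\'Et}}_{\ell}(s_qE)\right)\implies \pi_{p+q}\left(\underline{\textup{\'Et}}_{\ell}(E)\right),\]
and the exact functor $\underline{\textup{\'Et}}_{\ell}$ yields a morphism of strongly convergent spectral sequences whose $E_1$-map is $\underline{\textup{\'Et}}_{\ell\ast}$ on each slice. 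By Lemma~\ref{torlem} applied to the torsion spectrum $E$, each $\pi_q^{\mu}(E)$ lies in $\textup{DM}^{\textup{eff}}(k)_{\textup{tor}}$, so that $s_q(E)[1/p]=\textup{EM}(\pi_q^{\mu}(E)(q)[2q])$ with the motive in $\textup{DM}^{\textup{eff}}(k;\mathbb{Z}[1/p])_{\textup{tor}}$. Proposition~\ref{emprop} then shows that the $E_1$-map is an isomorphism, and strong convergence promotes this to an isomorphism on the abutments, which is exactly the conclusion of the theorem.

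The main obstacle is the reduction step: verifying that $\pi_n\circ\underline{\textup{\'Et}}_{\ell}$ interacts correctly with filtered colimits of finite effective torsion spectra. Homotopy groups of objects in $\Pro(\Sp)^{H\mathbb{Z}/\ell}$ are not in general well-behaved under colimits, so one must exploit the torsion hypothesis (which ensures that the relevant $\ell$-completed groups are pro-finite of controlled type) together with the fact that the slices of a finite effective torsion spectrum are themselves effective torsion with $\ell$-adic \'etale realizations pinned down by Proposition~\ref{emprop}. Beyond this reduction, the argument is a direct generalization of the spectral-sequence comparison performed for Theorem~\ref{main}, with the input Proposition~\ref{sqprop} replaced by its analogue for $s_q(E)$ supplied by Proposition~\ref{emprop} and Lemma~\ref{torlem}.
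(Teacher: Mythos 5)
Your proposal takes essentially the same approach as the paper: reduce to $E\in\Spt^{\textup{eff}}_{\textup{fin}}(k)_{\textup{tor}}$ using compact generation, compare the slice spectral sequence with the one obtained by applying $\underline{\textup{\'Et}}_{\ell}$ to the slice tower (with $\underline{\textup{\'Et}}_{\ell}(f_qE)$ suitably connected by Proposition~\ref{etaleconnectivity}), and use Lemma~\ref{torlem} together with Proposition~\ref{emprop} to identify the $E_1$-pages. The subtlety you flag about $\pi_n\circ\underline{\textup{\'Et}}_{\ell}$ and filtered colimits in $\Pro(\Sp)^{H\mathbb{Z}/\ell}$ is a reasonable concern; the paper's own reduction step is equally terse on this point and does not spell it out either.
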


Before proving this lemma, let us justify why this is a generalization of the usual Suslin-Voevodsky theorem. We need the following lemma.

\begin{lemma}\label{fincor}
Given a prime number $\ell$ different from the exponential characteristic $p$ of $k$, and given a smooth $k$-scheme $X$,\[\underline{\textup{\'Et}}_{\ell}(M\mathbb{Z}/N\wedge X_+)\rightarrow \underline{\textup{\'Et}}_{\ell}(M\mathbb{Z}/N)\wedge \textup{\'Et}_{\ell}X_+\simeq H\mathbb{Z}/\ell^m\wedge \textup{\'Et}_{\ell}X_+\]
is a weak equivalence for every positive integer $N>1$ coprime to $p$ with $\ell$-adic valuation $m$.
\end{lemma}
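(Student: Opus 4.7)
The plan is to split the lemma into two assertions: the identification $\underline{\textup{\'Et}}_{\ell}(M\mathbb{Z}/N)\simeq H\mathbb{Z}/\ell^m$ in $\Pro(\Sp)^{H\mathbb{Z}/\ell}$, and the claim that the canonical comparison map $\alpha:\underline{\textup{\'Et}}_{\ell}(M\mathbb{Z}/N\wedge X_+)\to \underline{\textup{\'Et}}_{\ell}(M\mathbb{Z}/N)\wedge \textup{\'Et}_{\ell}X_+$ provided by the oplax monoidality of $\underline{\textup{\'Et}}_{\ell}$ is a weak equivalence.

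For the first assertion, apply the exact functor $\underline{\textup{\'Et}}_{\ell}$ to the cofibre sequence $M\mathbb{Z}[1/p]\xrightarrow{N} M\mathbb{Z}[1/p]\to M\mathbb{Z}/N$ in $\Spt(k)$ (valid since $\gcd(N,p)=1$). Lemma~\ref{cruc} identifies the source and middle term with $H\mathbb{Z}_{\ell}$ in $\Pro(\Sp)^{H\mathbb{Z}/\ell}$, so the cofibre is $H\mathbb{Z}_{\ell}/N$. Writing $N=\ell^m N'$ with $\gcd(N',\ell)=1$, the factor $N'$ is a unit in $\mathbb{Z}_{\ell}$, so $\mathbb{Z}_{\ell}/N\cong \mathbb{Z}/\ell^m$ and hence $\underline{\textup{\'Et}}_{\ell}(M\mathbb{Z}/N)\simeq H\mathbb{Z}/\ell^m$.

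For the second assertion, both the source and target of $\alpha$ carry canonical structures of $H\mathbb{Z}/\ell^m$-modules in $\Pro(\Sp)^{H\mathbb{Z}/\ell}$: the source via the oplax monoidality of $\underline{\textup{\'Et}}_{\ell}$ applied to the $M\mathbb{Z}/N$-module $M\mathbb{Z}/N\wedge X_+$, the target tautologically; moreover $\alpha$ is a morphism of $H\mathbb{Z}/\ell^m$-modules. As $H\mathbb{Z}/\ell^m$-modules in $\Pro(\Sp)^{H\mathbb{Z}/\ell}$ are $\ell$-complete, it suffices to check that $\alpha$ induces an isomorphism on every $\pi_n$. Apply proposition~\ref{emprop} to $M:=\mathbb{Z}[1/p]^{\textup{tr}}(X)/N\in \textup{DM}^{\textup{eff}}(k;\mathbb{Z}[1/p])_{\textup{tor}}$; the relation $(\textup{EM}\circ \mathbb{Z}[1/p]^{\textup{tr}})(X)\simeq M\mathbb{Z}[1/p]\wedge X_+$ recorded earlier, combined with the exactness of $\textup{EM}$, gives $\textup{EM}(M)\simeq M\mathbb{Z}/N\wedge X_+$, so that $\pi_n(\underline{\textup{\'Et}}_{\ell}(M\mathbb{Z}/N\wedge X_+))\cong \underline{\pi}_{n,0}(M\mathbb{Z}/N\wedge X_+)(k)\otimes \mathbb{Z}_{\ell}$, the Suslin homology $H_n^{\textup{Sus}}(X;\mathbb{Z}/N)\otimes \mathbb{Z}_{\ell}\cong H_n^{\textup{Sus}}(X;\mathbb{Z}/\ell^m)$. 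The target has $\pi_n\cong H_n^{\textup{\'et}}(X;\mathbb{Z}/\ell^m)$ by construction of $\textup{\'Et}_{\ell}$, and the classical \'etale Suslin--Voevodsky theorem supplies a canonical isomorphism between these two groups.

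The main obstacle is to check that $\pi_n(\alpha)$ is precisely the Suslin--Voevodsky comparison, rather than some other isomorphism between the same pair of groups. I would handle this by tracing $\alpha$ through the factorisation $\underline{\textup{\'Et}}_{\ell}L_{\mathbb{A}^1}\simeq L^{H\mathbb{Z}/\ell}\Gamma^{\mathbb{S}}_!\pi^*$ used in the proof of proposition~\ref{emprop}: on $\pi_n$ the piece $\pi^*$ induces precisely the classical \'etale Suslin--Voevodsky isomorphism (after $\otimes\mathbb{Z}_{\ell}$), while $L^{H\mathbb{Z}/\ell}\Gamma^{\mathbb{S}}_!$ is formally an equivalence on the relevant images by the pro-left-adjoint calculation from that proof. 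Composing these identifications realises $\pi_n(\alpha)$ as the Suslin--Voevodsky isomorphism, delivering the required weak equivalence.
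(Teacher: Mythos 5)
Your first assertion is correct and pleasantly economical: applying the exact functor $\underline{\textup{\'Et}}_{\ell}$ to the cofibre sequence $M\mathbb{Z}[1/p]\xrightarrow{N}M\mathbb{Z}[1/p]\to M\mathbb{Z}/N$ and using Lemma~\ref{cruc} does give $\underline{\textup{\'Et}}_{\ell}(M\mathbb{Z}/N)\simeq H(\mathbb{Z}_{\ell}/N)\simeq H\mathbb{Z}/\ell^{m}$.

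The second half has a genuine gap, and it appears at the very first step. You claim that the source of $\alpha$ inherits an $H\mathbb{Z}/\ell^{m}$-module structure ``via the oplax monoidality of $\underline{\textup{\'Et}}_{\ell}$.'' This is backwards: an oplax monoidal functor provides a map $F(A\wedge B)\to F(A)\wedge F(B)$, so it carries \emph{comodules} to comodules, not modules to modules. To transport the $M\mathbb{Z}/N$-module structure on $M\mathbb{Z}/N\wedge X_{+}$ across $\underline{\textup{\'Et}}_{\ell}$ you would need the structure map $\underline{\textup{\'Et}}_{\ell}(M\mathbb{Z}/N\wedge(M\mathbb{Z}/N\wedge X_{+}))\to\underline{\textup{\'Et}}_{\ell}(M\mathbb{Z}/N)\wedge\underline{\textup{\'Et}}_{\ell}(M\mathbb{Z}/N\wedge X_{+})$ to be invertible, i.e.\ precisely a Künneth-type statement of the kind the lemma itself is asserting; the argument is circular. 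And even granting a module structure, the inference ``$\pi_{n}(\alpha)$ is an isomorphism for all $n$, hence $\alpha$ is an equivalence'' is not automatic in $\Pro(\Sp)^{H\mathbb{Z}/\ell}$: pro-spectra do not satisfy a naive Whitehead theorem, and you have not identified a convergent $t$-structure or a completeness argument that would justify this. These two issues together mean the reduction to proposition~\ref{emprop} and the classical \'etale Suslin--Voevodsky theorem is not supported.

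The paper avoids all of this by arguing directly: it writes $M\mathbb{Z}/N\wedge X_{+}$ as the colimit $\colim_{n}\Sigma_{\mathbb{P}^{1}}^{-n}\Sigma^{\infty}_{\mathbb{P}^{1}}K(\mathbb{Z}/N(n),2n)\wedge X_{+}$, uses the fact that $\underline{\textup{\'Et}}_{\ell}$ preserves colimits, identifies $\textup{\'Et}_{\ell}K(\mathbb{Z}/\ell^{m}(n),2n)\simeq K(\mathbb{Z}/\ell^{m},2n)$ via corollary~8.5 of \cite{Hoyois}, and then pulls $\textup{\'Et}_{\ell}X_{+}$ out of the colimit. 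No appeal to proposition~\ref{emprop} or to the classical Suslin--Voevodsky theorem is needed; the Künneth-type input is supplied at the level of unstable realizations (on finite motivic spaces), where it is already established. Your first assertion can be salvaged as a remark, but the second needs to be replaced by something along these lines.
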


\begin{proof}
Note that
\[M\mathbb{Z}/N\wedge X_+\simeq\colim_{n\rightarrow\infty}\Sigma_{\mathbb{P}^1}^{-n}\Sigma^{\infty}_{\mathbb{P}^1}K(\mathbb{Z}/N(n),2n)\wedge X_+.\]
Since $\underline{\textup{\'Et}}_{\ell}$ preserves colimits, we obtain
\begin{eqnarray*}\underline{\textup{\'Et}}_{\ell}(M\mathbb{Z}/N\wedge X_+) &\simeq& \colim_{n\rightarrow{\infty}}\underline{\textup{\'Et}}_{\ell}(\Sigma_{\mathbb{P}^1}^{-n}\Sigma^{\infty}_{\mathbb{P}^1}K(\mathbb{Z}/\ell^m(n),2n)\wedge (\Sigma_{\mathbb{P}^1}^{\infty}X_+/\ell^m))\\ &\simeq& \colim_{n\rightarrow{\infty}}(\Sigma_{S_{\ell}^2}^{-n}\Sigma^{\infty}_{S^2_{\ell}}\textup{\'Et}_{\ell}K(\mathbb{Z}/\ell^m(n),2n)\wedge \textup{\'Et}_{\ell}X_+/\ell^m)\\ &\simeq^{(*)}& \colim_{n\rightarrow{\infty}}(\Sigma_{S_{\ell}^2}^{-n}\Sigma^{\infty}_{S^2_{\ell}}K(\mathbb{Z}/\ell^m,2n)\wedge \textup{\'Et}_{\ell}X_+/\ell^m)\\ &\simeq^{(**)}& \left(\colim_{n\rightarrow{\infty}}\Sigma_{S^2}^{-n}\Sigma^{\infty}_{S^2}K(\mathbb{Z}/\ell^m,2n)\right)\wedge \textup{\'Et}_{\ell}X_+/\ell^m\\ &\simeq& H\mathbb{Z}/\ell^m\wedge \textup{\'Et}_{\ell}X_+,
\end{eqnarray*}
where the equivalence $(*)$ follows from corollary 8.5 of \cite{Hoyois}, and $(**)$ follows from $\textup{\'Et}_{\ell}X_+/\ell^m$ being a space, not just a pro-space.
\end{proof}

In particular, if $X$ is a smooth $k$-scheme and $E=M\mathbb{Z}/\ell\wedge X_+$ with $\ell\neq p$ a prime, then $\underline{\textup{\'Et}}_{\ell}(E)\simeq H\mathbb{Z}/\ell\wedge\textup{\'Et}_{\ell}X_+$ by lemma~\ref{fincor}. Applying theorem~\ref{susvoe}, we obtain the isomorphism
\[H^{\textup{sus}}_n(X;\mathbb{Z}/\ell)\simeq H^{\textup{\'et}}_n(X;\mathbb{Z}/\ell),\]
from which we get the Suslin-Voevodsky comparison theorem
\[H^n_{\textup{sus}}(X;\mathbb{Z}/\ell)\simeq H^n_{\textup{\'et}}(X;\mathbb{Z}/\ell)\]
via dualization. We now prove the above theorem.

\begin{proof}
$\Spt^{\textup{eff}}(k)_{\textup{tor}}$ is compactly generated by the objects $\{\Sigma^{p,q}\Sigma_{\mathbb{P}^1}^{\infty}X_+/N|q\geq 0,\ p\in\mathbb{Z},\ (N,p)=1,\ X\in\Sm/k\}$. Therefore, it suffices to prove the theorem for $E\in\Spt^{\textup{eff}}_{\textup{fin}}(k)_{\textup{tor}}$. As before, the slice tower of $E$ gives a strongly convergent spectral sequence
\[E_1^{p,q}=\underline{\pi}_{p+q,0}(s_qE)(k)\otimes\mathbb{Z}_{\ell}\implies \underline{\pi}_{p+q,0}(E)(k)\otimes\mathbb{Z}_{\ell}.\]
$E$ is topologically $(N-1)$-connected for some $N$, and so by proposition~\ref{etaleconnectivity}, $\underline{\textup{\'Et}}_{\ell}(f_qE)$ is $(q+N-1)$-connected. Therefore, the \'etale realization of the slice tower gives a strongly convergent spectral sequence
\[E_1^{p,q}=\pi_{p+q}(\underline{\textup{\'Et}}_{\ell}(s_qE[1/p]))\implies \pi_{p+q}(\underline{\textup{\'Et}}_{\ell}(E)).\]
In fact, we have a morphism of spectral sequences from the former to the latter spectral sequence induced by stable $\ell$-adic \'etale realization.\\
\\
Since $E$ is a torsion effective $\mathbb{P}^1$-spectrum, $s_qE\simeq EM(\pi_q^{\mu}(E)(q)[2q])$ for some $\pi_q^{\mu}(E)\in\textup{DM}^{\textup{eff}}(k)_{\textup{tor}}$ by lemma~\ref{torlem}, and so proposition~\ref{emprop} implies that we have an isomorphism of the $E_1$-pages. The conclusion follows.
\end{proof}

\begin{corollary}\label{cor}
Suppose $k$ is an algebraically closed field of characteristic zero endowed with an inclusion $\sigma:k\hookrightarrow\mathbb{C}$. Let $E$ be a effective torsion motivic $\mathbb{P}^1$-spectrum, and let $\ell$ be a prime. Then for each $n$, there is an isomorphism
\[\pi_n(\underline{Et}_{\ell}(E))\cong \pi_n(\textup{Re}^{\sigma}_B(E))\otimes\mathbb{Z}_{\ell},\]
where $\textup{Re}^{\sigma}_B$ is the stable Betti realization functor (see \cite{Levine}).
\end{corollary}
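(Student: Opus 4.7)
The plan is to exhibit both sides as isomorphic to $\underline{\pi}_{n,0}(E)(k)\otimes\mathbb{Z}_\ell$, using Theorem~\ref{susvoe} on the \'etale side and Levine's Betti-realization analogue on the Betti side.

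First I would apply Theorem~\ref{susvoe} directly: since $k$ is algebraically closed of exponential characteristic one and $E\in\Spt^{\textup{eff}}(k)_{\textup{tor}}$, the theorem supplies the isomorphism
\[\underline{\textup{\'Et}}_{\ell\ast}:\underline{\pi}_{n,0}(E)(k)\otimes\mathbb{Z}_{\ell}\xrightarrow{\sim}\pi_n(\underline{\textup{\'Et}}_{\ell}(E)).\]
Next I would invoke Theorem~7.1 of \cite{Levine}, which is the Betti-realization analogue of Theorem~\ref{susvoe}. Under the present hypotheses (algebraically closed $k$ of characteristic zero, embedding $\sigma:k\hookrightarrow\mathbb{C}$, and $E$ effective torsion) that result yields an isomorphism
\[\textup{Re}^{\sigma}_{B\ast}:\underline{\pi}_{n,0}(E)(k)\xrightarrow{\sim}\pi_n(\textup{Re}^{\sigma}_B(E)).\]
Tensoring this isomorphism with the flat $\mathbb{Z}$-module $\mathbb{Z}_\ell$ and composing with the inverse of the \'etale isomorphism produces the chain
\[\pi_n(\underline{\textup{\'Et}}_{\ell}(E))\;\cong\;\underline{\pi}_{n,0}(E)(k)\otimes\mathbb{Z}_{\ell}\;\cong\;\pi_n(\textup{Re}^{\sigma}_B(E))\otimes\mathbb{Z}_{\ell},\]
which is the desired comparison.

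The argument is essentially formal once both inputs are in hand, so I do not anticipate a serious obstacle. The only point that merits attention is that Levine's Theorem~7.1 applies to all effective torsion $\mathbb{P}^1$-spectra, not merely to motivic cohomology-type spectra such as $M\mathbb{Z}/N\wedge X_+$. This generality is indeed present, and is proved by the same slice-spectral-sequence comparison that we used for Theorem~\ref{susvoe}: Lemma~\ref{torlem} reduces the statement to slices of the form $\textup{EM}(M)$ with $M\in\textup{DM}^{\textup{eff}}(k)_{\textup{tor}}$, and on such slices the classical Suslin-Voevodsky comparison with finite coefficients supplies an isomorphism on the $E_1$-pages of the two strongly convergent spectral sequences, which then propagates to the abutments.
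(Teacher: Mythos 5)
Your argument is exactly the paper's: combine Theorem~\ref{susvoe} with Theorem~7.1 of \cite{Levine}, identify both sides with $\underline{\pi}_{n,0}(E)(k)\otimes\mathbb{Z}_\ell$, and compose. The additional commentary on why Levine's Theorem~7.1 holds in this generality is accurate but not needed beyond the citation.
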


\begin{proof}
Combine theorem~\ref{susvoe} above, and theorem 7.1 of \cite{Levine}.
\end{proof}

\begin{remark}
\textup{If our algebraically closed field $k$ is of characteristic zero, we can replace $\ell$-adic Bousfield localization in our construction of the stable $\ell$-adic \'etale realization with profinite completion. Going through the same proofs and constructions tells us that when $k$ has characteristic zero, there is a refinement of the results in this paper. More precisely, we may replace all $\ell$-completions with pro-finite completions, and all $-\otimes\mathbb{Z}_{\ell}$ with $-\otimes\hat{\mathbb{Z}}$. In particular, we may replace the isomorphism of corollary~\ref{cor} with the isomorphism
\[\pi_n(\underline{\hat{\textup{Et}}}(E))\cong \pi_n(\textup{Re}^{\sigma}_B(E))^{\wedge},\]
where $\underline{\hat{\textup{Et}}}$ is our pro-finitely completed (as apposed to $\ell$-adic-completed) stable \'etale realization functor.}
\end{remark}

\textit{Acknowledgments.} I would like to thank Professors Denis-Charles Cisinski, Peter Oszvath, and Charles Weibel for their support and encouragement. In particular, I would like to thank Professor Weibel for comments on an earlier version of this paper. I am also greatly indebted to Professor Cisinski for patiently discussing the details of this paper, and for suggesting some improvements. I would like to thank Elden Elmanto, Adeel Khan, and Markus Land for useful discussions. Also, thanks to Marc Hoyois for pointing out an error in the proof of a proposition of a previous version of this paper. This project was supported by Princeton University and the University of Regensburg (SFB1085, Higher Invariants).

\small{\textsc{Department of Mathematics, Princeton University, Princeton NJ}}\\
\textit{Email address:} \texttt{\small{mzargar@math.princeton.edu}}
\end{document}